\newcommand{\Ls}{\mathcal{L}}
\newcommand{\Ws}{\mathcal{W}}
\newcommand{\N}{\mathbb{N}}
\newcommand{\Z}{\mathbb{Z}}
\newcommand{\R}{\mathbb{R}}
\newcommand{\loc}{\mathrm{loc}}
\def\e{\mathrm{e}}
\numberwithin{equation}{section}
\DeclareFontFamily{U}{mathx}{\hyphenchar\font45}
\DeclareFontShape{U}{mathx}{m}{n}{
      <5> <6> <7> <8> <9> <10>
      <10.95> <12> <14.4> <17.28> <20.74> <24.88>
      mathx10
      }{}
\DeclareSymbolFont{mathx}{U}{mathx}{m}{n}
\DeclareMathAccent{\widecheck}{0}{mathx}{"71}
\DeclareMathAccent{\wideparen}{0}{mathx}{"75}
\newcommand{\leqnomode}{\tagsleft@true}
\newcommand{\reqnomode}{\tagsleft@false}
\newcommand{\dd}{\mathrm{d}}
\theoremstyle{theorem}
\newtheorem{theorem}{\sc \textbf{Theorem}}[section]  
\newtheorem{proposition}[theorem]{\sc \textbf{Proposition}}   
\newtheorem{corollary}[theorem]{\sc \textbf{Corollary}}        
\newtheorem{lemma}[theorem]{\sc \textbf{Lemma}}
\theoremstyle{remark}
\newtheorem{definition}[theorem]{\sc Definition}
\title[Besov and Triebel--Lizorkin spaces]{Besov and Triebel--Lizorkin spaces on Lie groups} 
\author[Bruno]{Tommaso Bruno}
\address{Dipartimento di Scienze Matematiche ``Giuseppe Luigi Lagrange'',
  Politecnico di Torino, Corso Duca degli Abruzzi 24, 10129 Torino,
  Italy - Dipartimento di Eccellenza 2018-2022}
\email{tommaso.bruno@polito.it}
\author[Peloso]{Marco M.\ Peloso}
\address{Dipartimento di Matematica, 
Universit\`a degli Studi di Milano, 
Via C.\ Saldini 50,  
20133 Milano, Italy}
\email{marco.peloso@unimi.it}
\author[Vallarino]{Maria Vallarino}
\address{Dipartimento di Scienze Matematiche ``Giuseppe Luigi Lagrange'',
  Politecnico di Torino, Corso Duca degli Abruzzi 24, 10129 Torino,
  Italy - Dipartimento di Eccellenza 2018-2022}
\email{maria.vallarino@polito.it}
 \keywords{Lie groups, sub-Laplacians, Besov spaces, Triebel--Lizorkin spaces} 
\subjclass[2010]{46E35, 22E30, 43A15}
\thanks{All authors are  partially supported by the grant PRIN 2015
  {\em Real and Complex Manifolds: Geometry, Topology and Harmonic
    Analysis}, and are members of the Gruppo Nazionale per l'Analisi
  Matematica, la Probabilit\`a e le loro Applicazioni (GNAMPA) of the
  Istituto Nazionale di Alta Matematica (INdAM)}
\begin{document}

\begin{abstract}
In this paper we develop a theory of Besov and  Triebel--Lizorkin spaces on general noncompact Lie groups endowed with a sub-Riemannian structure. Such spaces are defined by means of hypoelliptic sub-Laplacians with drift, and endowed with a measure whose density with respect to a right Haar measure is a continuous positive character of the group. We prove several equivalent characterizations of their norms, we establish comparison results also involving Sobolev spaces of recent introduction, and investigate their complex interpolation and algebra properties. 
\end{abstract}

\maketitle
\begin{center}
{\emph{In memory of Elias M.\ Stein}}
\end{center}

\section{Introduction}
Besov and Triebel--Lizorkin spaces have attracted considerable attention in the last decades, for they encompass several classical function spaces, such as Lebesgue, Sobolev, Hardy and BMO spaces. As such, they have a paramount role in describing the regularity of solutions to differential equations. Following the complete and well-understood theory in the Euclidean setting, see e.g.~\cite{TriebelTFS}, several have been the attempts of generalisation to wider contexts, including Riemannian manifolds with bounded geometry~\cite{Triebelmanifolds, Triebelmanifolds2}, Lie groups endowed with a left-invariant Riemannian structure~\cite{Triebelgroups2, Triebelgroups}, doubling metric measure spaces with a reverse doubling property~\cite{HMY, MY, YZ, KP, HH}, doubling metric measure spaces~\cite{GKN, GKZ}. A theory of Besov spaces has also been developed on Lie groups endowed with a sub-Riemannian structure, first on groups of polynomial growth~\cite{FMV}, see also~\cite{GS}, then recently extended on unimodular groups~\cite{Feneuil}. The aim of the present paper is to develop a satisfactory theory of Besov and  Triebel--Lizorkin spaces on general noncompact Lie groups, potentially nondoubling, endowed with a sub-Riemannian structure. The results we present insert in the theory initiated in~\cite{BPTV}, some of whose results we substantially improve, and are part of a long-term program whose aim is to develop a theory of function spaces on general sub-Riemannian manifolds. 

\smallskip

In the Euclidean setting, the Besov spaces $B^{p,q}_\alpha(\R^d)$ and the Triebel--Lizorkin spaces $F^{p,q}_\alpha(\R^d)$ are classically introduced by means of the Littlewood--Paley decomposition of a function. However, it is well known that if $\Delta$ is the Euclidean nonnegative Laplacian on $\R^d$, then the Besov and Triebel--Lizorkin norms of a distribution $f$ are equivalent respectively to the norms
\begin{equation*}
\|\e^{-t_0 \Delta }f\|_{L^p(\R^d)} + \left( \int_0^1 \left( t^{- \alpha/2} \| (t\Delta)^{m} \e^{-t\Delta} f\|_{L^p(\R^d)}\right)^q \, \frac{\dd t}{t}\right)^{1/q}
\end{equation*}
and
\begin{equation*}
\|\e^{-t_0 \Delta }f\|_{L^p(\R^d)} + \Bigg\| \left( \int_0^1 \left( t^{-\alpha/2} |(t\Delta)^{m} \e^{-t\Delta} f|\right)^q \, \frac{\dd t}{t}\right)^{1/q} \Bigg\|_{L^p(\R^d)},
\end{equation*}
whenever $\alpha\geq 0$, $m>\alpha/2$ is integer and $t_0 \in (0,1)$ (if $\alpha>0$, one can take also $t_0=0$). See, e.g.,~\cite{Triebel2}. By means of these characterizations, which we call of ``Gauss--Weierstrass type'', the problem of defining analogous spaces outside the Euclidean context can be reduced to finding an appropriate substitute for the Laplacian in that context, see e.g.~\cite{KP, HH}. The Littlewood--Paley decomposition, instead, heavily relies on Mihlin--H\"ormander's multiplier theorem for the Laplacian, which is known to fail in some cases, as we explain below.  

If $G$ is a Lie group of polynomial volume growth, endowed with the sub-Riemannian structure induced by a family of left-invariant vector fields satisfying H\"ormander's condition, then a Mihlin--H\"ormander type multiplier theorem holds for the sub-Laplacian associated with the chosen family. In this case, the Besov spaces defined in terms of the Littlewood--Paley decomposition coincide, with equivalence of norms, to those defined by a Gauss--Weierstrass type norm, where the Laplacian is replaced by the sum-of-squares sub-Laplacian associated with the chosen family of vector fields, see~\cite{FMV}. In this case, algebra properties analogous to those in the Euclidean setting hold~\cite{GS}. If $G$ is more generally a unimodular group, then Besov spaces defined by means of a Gauss--Weierstrass type norm were introduced and studied in~\cite{Feneuil}, where it was proved that they still enjoy an algebra property. 

\smallskip

In this paper, we develop a theory of Besov and Triebel--Lizorkin spaces on general noncompact Lie groups endowed with a sub-Riemannian structure. Since these groups might exhibit an exponential volume growth at infinity, in general they do not satisfy a global doubling condition. In particular, our results for Besov spaces extend those in~\cite{GS,Feneuil} while, to the best of our knowledge, those for Triebel--Lizorkin spaces have no counterpart on nondoubling Lie groups endowed with a sub-Riemannian structure. We now precisely describe our setting.

\smallskip

Let $G$ be a noncompact connected Lie group with identity $e$ and let $\mathbf{X}= \{X_1, \dots ,X_\ell \}$ be a family of linearly independent left-invariant vector fields on $G$ satisfying H\"ormander's condition. Denote with $\rho$ a right Haar measure of $G$, with $\delta$ the modular function and let $\chi$ be a continuous positive character of $G$; consider the measure $\mu_\chi$ on $G$ with density $\chi$ with respect to $\rho$, i.e.\ $\dd \mu_\chi = \chi \, \dd \rho$. Consider the differential operator
\begin{equation*}
\Delta_\chi = - \sum_{j=1}^\ell (X_j^2 + c_jX_j), \qquad c_j=(X_j \chi)(e), \quad j=1,\dots, \ell,
\end{equation*}
with domain the set of smooth and compactly supported functions $C_c^\infty(G)$. This operator was introduced by Hebisch, Mauceri and Meda~\cite{HMM}, who showed that $\Delta_\chi$ is essentially self-adjoint on $L^2(\mu_\chi)$. With a slight abuse of notation, we still denote with $\Delta_\chi$ its unique self-adjoint extension. We emphasize that if $\chi$ is the modular function $\delta$, $\mu_\delta= \lambda$ is a left Haar measure of $G$ and the operator $\Delta_\delta$, which from now on will be denoted by $\mathcal L$, is the intrinsic hypoelliptic Laplacian associated with the Carnot--Carath\'eodory metric induced on $G$ by the vector fields ${\bf{X}}$, see~\cite{ABGR,BPTV}, and is the natural substitute of the Laplacian in this setting. This also reflects the fact that the measure $\lambda$ is privileged among the measures $\mu_\chi$. 

In view of the previous discussion, it is worth mentioning that since $\Delta_\chi$ has a holomorphic functional calculus whenever $\chi$ is nontrivial, see~\cite{HMM}, a Mihlin--H\"ormander type theorem for $\Delta_\chi$ and hence an associated Littlewood--Paley decomposition do not hold when $\chi \neq 1$. Thus, given $\alpha \geq 0$, $p,q\in [1,\infty]$, $m>\alpha/2$ and $t_0\in (0,1)$, we are led to define the Besov space $B_\alpha^{p,q}(\mu_\chi)$ and the Triebel--Lizorkin space $F_\alpha^{p,q}(\mu_\chi)$ as the spaces of tempered distributions $f$ such that respectively
\[
 \|\e^{-t_0\Delta_\chi} f\|_{L^p(\mu_\chi)} +   \left( \int_0^1 \left(t^{-\alpha/2}\,  \| (t\Delta_\chi)^{m} \e^{-t\Delta_\chi}  f\|_{L^p(\mu_\chi)}\right)^q \, \frac{\dd t}{t}\right)^{1/q}  
\]
and
\[
\|\e^{-t_0\Delta_\chi} f\|_{L^p(\mu_\chi)} +   \left\| \left( \int_0^1 \left( t^{-\alpha/2} |    (t\Delta_\chi)^{m} \e^{-t\Delta_\chi}  f|\right)^q \, \frac{\dd t}{t}\right)^{1/q} \right\|_{L^p(\mu_\chi)}  
\]
are finite, with the usual modification when $q=\infty$.  As expected, different choices of the parameters $m$ and $t_0$ give equivalent norms (see Theorem~\ref{teo-equiv1}).

\smallskip

We shall obtain several equivalent characterizations of Besov and Triebel--Lizorkin norms. In addition to the aforementioned independence of the parameters $m$ and $t_0$, we realize a discrete version of their norms strongly resembling their definition in the classical case, namely similar in flavour to that obtained by the Littlewood--Paley decomposition of a function (Theorem~\ref{teo-equiv2}). We shall also provide a characterization of the norms in terms of the vector fields of the chosen family ${\bf{X}}$ (Theorem~\ref{teo-equiv3}), a recursive characterization (Theorem~\ref{teo_recursive}), and we also present a characterization in terms of differences of functions (Theorem~\ref{teo_functional}).

Then, we establish various comparison results between Besov and Triebel--Lizorkin spaces, which extend the classical embeddings to the current setting (Theorems~\ref{teo_embeddings},~\ref{teo_embeddings_TL} and~\ref{teo_embeddingp2}). Among these results, we mention that every Triebel--Lizorkin space is intermediate beween two Besov spaces, and that when $q=2$ the Triebel--Lizorkin spaces coincide with the Sobolev spaces introduced in~\cite{BPTV} (see also~\cite{RY}). These results manifest a coherence of the results of~\cite{BPTV} and those of the present paper.

Furthermore, we investigate the complex interpolation properties and the algebra properties of the Besov and Triebel--Lizorkin spaces, see Theorems~\ref{interpolation} and~\ref{teo_algebra} respectively. In particular, we show that if $\alpha>0$, then the spaces $B_\alpha^{p,q}(\mu_\chi)\cap L^{\infty}$ and $F_\alpha^{p,q}(\mu_\chi)\cap L^{\infty}$ are algebras under pointwise multiplication. To prove such algebra properties, we use the paraproduct technique.  

\smallskip

The paper is organized as follows. In Section~\ref{sec:PD}, we introduce the general setting of the paper and give the precise definition of Besov and Triebel--Lizorkin spaces. In Section~\ref{sec:HS}, we establish several results related to the heat semigroup of the operator $\Delta_\chi$. Section~\ref{sec:EN} is devoted to the discussion of various equivalent characterizations of Besov and Triebel--Lizorkin norms.  In Section~\ref{sec:CT} we prove embedding and comparison results, while in Sections~\ref{sec:IP} and~\ref{sec:AP} we obtain interpolation and algebra properties, respectively. In Section~\ref{geo-inq-sec} we discuss further developments of this work. 

\section{Preliminaries and Definitions}\label{sec:PD}
All throughout the paper, $G$ will be a noncompact connected Lie group with identity $e$. We shall denote with $\rho$ a right Haar measure, with $\delta$ the modular function, and with $\lambda$ the left Haar measure such that $\dd \lambda = \delta \, \dd \rho$. We recall that $\delta$ is a smooth positive character of $G$, i.e.\ a smooth homomorphism of $G$ into the multiplicative group $\R^+$.  The letter $\chi$ will always denote a continuous positive character of $G$, which is then automatically smooth. We shall denote with $\mu_\chi$ the measure with density $\chi$ with respect to $\rho$. Observe that $\mu_\delta=\lambda$ and $\mu_1=\rho$.

We fix a family of left-invariant linearly independent vector fields $\mathbf{X}= \{ X_1,\dots, X_\ell\}$ which satisfy H\"ormander's condition. These vector fields induce a left-invariant distance $d_C(\, \cdot\, ,\,  \cdot\, )$ which is the associated Carnot--Carathéodory distance. We let $|x|=d_C(x,e)$, and denote by $B_r$ the ball centred at $e$ of radius $r$. The volume of the ball $B_r$ with respect to the measure $\rho$ will be denoted with $V(r)=\rho(B_r)$; recall that also $V(r)= \lambda(B_r)$. It is well known (cf.~\cite{Guiv,Varopoulos1}) that there exist two constants, which we denote with $d=d(G, \mathbf{X})$ and $D=D(G)$, such that 
\begin{equation}\label{pallepiccole}
C^{-1} r^d \leq V(r) \leq C r^d\qquad \forall r\in (0,1]
\end{equation}
and
\begin{equation}\label{pallegrandi}
V(r)\leq C \e^{Dr}\qquad \forall r\in (1,\infty).
\end{equation}
for a constant $C>0$ independent of $r$. 

If $p\in [1,\infty)$, the spaces of (equivalent classes of) measurable functions whose $p$-power is integrable with respect to $\mu_\chi$ will be denoted by $L^p(\mu_\chi)$, and endowed with the usual norm which we shall denote with $\| \cdot \|_{L^p(\mu_\chi)}$, or $\| \cdot \|_{L^p}$ when there is no risk of confusion. The space $L^\infty$ will be the space of (equivalent classes of) measurable functions which are $\rho$-essentially bounded; observe that this coincides with the space of $\mu_\chi$-essentially bounded functions for every positive character $\chi$ of $G$, since $\mu_\chi$ is absolutely continuous with respect to $\rho$. Observe moreover that, since for every character $\chi$ and $R>0$ there exists a constant $c=c(\chi, R)$ such that
\begin{equation}\label{localdoubmugamma}
c^{-1} \chi(x) \leq \chi (y) \leq c \chi(x) \qquad \forall \, x,y\in G
\, \mbox{ such that }\, d_C(x,y)\leq R,
\end{equation}
cf.~\cite{BPTV}, by~\eqref{pallepiccole} the metric measure space $(G, d_C, \mu_\chi)$ is locally doubling. 

We shall write $\mathscr{I}$ for the set $\{1,\dots, \ell \}$. For every $m\in\N$, $\mathscr{I}^m$ will be the set of multi-indices $J=(j_1,\dots ,j_m)$ such that $j_i \in \mathscr{I}$ and for $J\in \mathscr{I}^m$ we denote by $X_J$ the left-invariant differential operator $X_J = X_{j_1}\cdots X_{j_m}$. Recall that since $\chi$ is a smooth character, if $c_j=(X_j\chi)(e)$ then
\begin{equation}\label{derivatacarattere}
X_j\chi=c_j \chi \qquad \forall j\in \mathscr{I}.
\end{equation}
We denote with $\mathcal{S}(G)$ the Schwartz space of functions $\varphi  \in C^\infty(G)$ such that all the seminorms
\[
\mathcal{N}_{J,n}(\varphi) = \sup_{x\in G} \e^{n|x|} |X_J \varphi(x)|, \qquad n\in \N, \,  J\in \mathscr{I}^m, \; m\in \N,
\]
are finite. We denote by $\mathcal{S}'(G)$ the dual space of $\mathcal{S}(G)$. The convolution between two functions $f$ and $g$, when it exists, is  
	\[
	f*g(x) =\int_G f(xy^{-1})g(y)\, \dd \rho(y).
	\]
Observe that the convolution $f*g$ makes sense also when $f\in \mathcal{S}'(G)$ and $g\in \mathcal{S}(G)$, and that in this case $f*g\in \mathcal{S}'(G)\cap C^\infty(G)$.

\smallskip

Let now $\Delta_{\chi}$ be the sub-Laplacian with drift 
\begin{equation*}
\Delta_\chi = - \sum_{j=1}^\ell (X_j^2 + c_jX_j).
\end{equation*}
We set
\[
X  = \sum\nolimits_{j=1}^\ell c_jX_j, \qquad  \|X\|=\big(\sum\nolimits_{j=1}^{\ell}c_j^2\big)^{1/2}.
\]
The operator $\Delta_\chi$ generates a diffusion semigroup, i.e.\ $(\e^{-t\Delta_\chi})_{t>0}$ extends to a contraction semigroup on $L^p(\mu_\chi)$ for every $p \in [1, \infty]$ (see e.g.~\cite[Proposition 3.1, (ii)]{HMM}) whose infinitesimal generator, with a slight abuse of notation, we still denote with $\Delta_\chi$. Observe that $\Delta_1$ is the standard left-invariant sum-of-squares sub-Laplacian, usually denoted with $\Delta$. The convolution kernel of $\e^{-t\Delta}$ will be denoted with $p_t$, i.e.\ $\e^{-t\Delta} f = f* p_t$. Recall that $p_t\in \mathcal{S}(G)$. Since $\Delta_\chi$ is also left-invariant, $\e^{-t\Delta_\chi}$ admits a convolution kernel as well, which we denote with $p_t^\chi$. Since
\begin{equation}\label{ptgamma_pt}
p_t^\chi= \e^{-\frac{t}{4} \|X\|^2} \chi^{-1/2} p_t,
\end{equation}
cf.~\cite{BPTV}, and since characters grow at most exponentially, cf.~\cite[Proposition 5.7]{HMM}, $p_t^\chi \in \mathcal{S}(G)$. Thus, for $f\in \mathcal{S}'(G)$ one has $\e^{-t\Delta_\chi} f = f * p_t^\chi$.

\smallskip

For every $t>0$ and $m\in\mathbb N$ we denote by $W_t^{(m)}$ the operator
\begin{equation}\label{f: Wtm}
W_t^{(m)}=(t\Delta_\chi)^m\,\e^{-t\Delta_\chi}\,.
\end{equation}
In the case when $\Delta_\chi=\mathcal{L}$, we write
\begin{equation}\label{f: WtmL}
\Ws_t^{(m)}=(t\Ls)^m\,\e^{-t\Ls}\,.
\end{equation} 
As will be clear later on, see e.g.\ Theorem~\ref{teo-equiv2} below, for $j\in\N$ the operators $W^{(m)}_{2^{-j}}$ play an analogous role of the operators $\bigtriangleup_j$ involved in the classical Littlewood--Paley decomposition of a function on $\R^d$ (cf.~\cite{TriebelTFS} for such notation).

\smallskip

We can now define the Besov and Triebel--Lizorkin spaces on $G$ associated with $\Delta_\chi$. If $\beta\geq 0$, we denote with $[\beta]$ the largest integer smaller than or equal to $\beta$.
\begin{definition}\label{maindefinition}
Let $\alpha \geq 0$ and $p,q\in [1,\infty]$. 
\begin{itemize}
\item[(i)] The Besov space $B_\alpha^{p,q}(\mu_\chi)$ is the subspace of $\mathcal{S}'(G)$ made of distributions $f$ such that
\begin{equation}\label{Besov0}
\|f\|_{B_\alpha^{p,q}(\mu_\chi)} \coloneqq \|\e^{-\frac{1}{2}\Delta_\chi} f\|_{L^p(\mu_\chi)} + \mathscr{B}^{p,q}_{\alpha}(f) < +\infty,
\end{equation}
where
\[\mathscr{B}^{p,q}_{\alpha}( f) \coloneqq \left( \int_0^1 \left(t^{-\alpha/2}\,  \| W_t^{([\alpha/2]+1)}   f\|_{L^p(\mu_\chi)}\right)^q \, \frac{\dd t}{t}\right)^{1/q}\]
if $q<\infty$, while 
\[\mathscr{B}^{p,\infty}_{\alpha} (f) \coloneqq \sup_{t\in (0,1)} t^{-\alpha/2} \,\|  W_t^{([\alpha/2]+1)}   f\|_{L^p(\mu_\chi)}.\]
\item[(ii)] The Triebel--Lizorkin space $F_\alpha^{p,q}(\mu_\chi)$ is the subspace of $\mathcal{S}'(G)$ made of distributions $f$ such that
\begin{equation}\label{TL0}
\|f\|_{F_\alpha^{p,q}(\mu_\chi)} \coloneqq \|\e^{-\frac{1}{2}\Delta_\chi} f\|_{L^p(\mu_\chi)} + \mathscr{F}^{p,q}_{\alpha} (f) < +\infty,
\end{equation}
where
\[
\mathscr{F}^{p,q}_{\alpha} (f) \coloneqq \Bigg\| \left( \int_0^1 \left( t^{-\alpha/2} |    W_t^{([\alpha/2]+1)}  f|\right)^q \, \frac{\dd t}{t}\right)^{1/q} \Bigg\|_{L^p(\mu_\chi)}
\]
if $q<\infty$, while 
\[\mathscr{F}^{p,\infty}_{\alpha} (f) \coloneqq \big\| \sup_{t\in (0,1)} t^{-\alpha/2} | W_t^{([\alpha/2]+1)}  f|\big\|_{L^p(\mu_\chi)}.\]
\end{itemize}
\end{definition}
Observe that, for every $p\in [1,\infty)$ and $\alpha\geq 0$, $B^{p,p}_\alpha(\mu_\chi) = F^{p,p}_\alpha(\mu_\chi)$.

\section{The heat semigroup}\label{sec:HS}
In this section we shall prove many results involving the heat semigroup $\e^{-t\Delta_\chi }$ and its associated heat kernel $p_t^\chi$. They will be of fundamental importance later on. For any quantities $A$ and $B$, we shall write $A\lesssim B$ to indicate that there exists a constant $c>0$ such that $A\leq c \,B$. If $A\lesssim B$ and $B\lesssim A$, we write $A\approx B$ . We also set, for any function $g$, $\widecheck{g}(x) = g(x^{-1})$ for every $x\in G$.
\begin{lemma}\label{estimates_ptgamma} 
The following properties hold:
\begin{itemize}
\item[(i)] $(\e^{-t\Delta_\chi})_{t>0}$ is a diffusion semigroup on $(G, \mu_\chi)$;
\smallskip
\item[(ii)] for every $r>0$, $\sup_{B_r}\chi=\e^{\|X\|r}$;
\item[(iii)] there exist two constants $c_1, c_2>0$ such that 
\[(\delta \chi^{-1})^{1/2}(x)\, V(\sqrt t)^{-1}\e^{-c_1|x|^2/t} \lesssim p_t^\chi (x)\lesssim (\delta \chi^{-1})^{1/2}(x) \, V(\sqrt t)^{-1}\e^{-c_2|x|^2/t}
\]
 for every $t\in (0,1)$ and $x\in G$;
\item[(iv)]  for every $h\in \N$ there exists a positive constant $b=b_h$ such that 
\[|X_J p_t^\chi (x)| \lesssim (\delta\chi^{-1})^{1/2}(x)  V(\sqrt t)^{-1}  t^{-\frac h2} \,\e^{-b|x|^2/t}\qquad \forall t\in(0,1),\,x\in G,\, J\in \mathscr{I}^h.
\]
\end{itemize}
\end{lemma}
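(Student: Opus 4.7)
For part (i), the statement is precisely the content of \cite[Proposition~3.1(ii)]{HMM} applied to $\Delta_\chi$, so I would simply record it. For part (ii), my plan is to exploit that $\log\chi$ is a smooth Lie group homomorphism $G\to(\R,+)$ whose horizontal gradient is constant: by~\eqref{derivatacarattere} one has $X_j\log\chi\equiv c_j$. Along any horizontal curve $\gamma:[0,L]\to G$ with $\dot\gamma(s)=\sum_j a_j(s)X_j(\gamma(s))$ and sub-unit parametrization, the chain rule and Cauchy--Schwarz give
\[
|\log\chi(\gamma(L))|=\Big|\int_0^L\sum_j a_j(s)c_j\,\dd s\Big|\le \|X\|\,L,
\]
and minimizing $L$ over all such curves from $e$ to $x$ yields $\chi(x)\le \e^{\|X\||x|}$, whence $\sup_{B_r}\chi\le\e^{\|X\|r}$. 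To match the bound from below, I would consider the one-parameter subgroup $\gamma(s)=\exp(sV)$ with $V=\sum_j(c_j/\|X\|)X_j(e)\in\mathfrak g$: its tangent field is left-invariant, horizontal, and of unit sub-Riemannian norm, so $d_C(\gamma(s),e)\le s$, while the homomorphism property of $\log\chi$ forces $\log\chi(\gamma(s))=s\|X\|$. Letting $s\nearrow r$ produces points of $B_r$ on which $\chi$ approaches $\e^{\|X\|r}$.

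For part (iii), the key is the factorization~\eqref{ptgamma_pt}, $p_t^\chi=\e^{-t\|X\|^2/4}\chi^{-1/2}p_t$, already recalled in the paper. For $t\in(0,1)$ the scalar prefactor lies between two positive constants and may be absorbed, reducing the problem to a two-sided estimate for $p_t$. At this point I would invoke the classical small-time Gaussian bounds
\[
\delta^{1/2}(x)\,V(\sqrt t)^{-1}\,\e^{-c_1|x|^2/t}\;\lesssim\; p_t(x)\;\lesssim\;\delta^{1/2}(x)\,V(\sqrt t)^{-1}\,\e^{-c_2|x|^2/t},\qquad t\in(0,1],
\]
for the sum-of-squares sub-Laplacian $\Delta$ on a general Lie group, in the normalization appropriate for the right-Haar convolution kernel; these follow from the Varopoulos--Saloff-Coste theory and are already extracted in this form in~\cite{BPTV}. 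Combining $\chi^{-1/2}\delta^{1/2}=(\delta\chi^{-1})^{1/2}$ concludes.

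For part (iv), I would again differentiate~\eqref{ptgamma_pt}. Since $X_j\chi^{-1/2}=-\tfrac12 c_j\chi^{-1/2}$ by~\eqref{derivatacarattere}, an induction on $|J|$ using Leibniz produces an identity of the form
\[
X_J(\chi^{-1/2}p_t)(x)=\chi^{-1/2}(x)\sum_{|K|\le |J|} a_{J,K}\,X_K p_t(x),
\]
with constants $a_{J,K}$ depending only on the $c_j$'s. I would then invoke the companion derivative estimates $|X_K p_t(x)|\lesssim \delta^{1/2}(x) V(\sqrt t)^{-1} t^{-|K|/2}\e^{-b|x|^2/t}$ for $t\in(0,1)$, absorb the lower-order powers of $t^{-1/2}$ using $t\le 1$ (at the price of shrinking $b$), and regroup the prefactor as in~(iii) to obtain the claim with $b=b_h$ depending only on $h=|J|$.

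The main technical point is identifying the correct normalization of the Gaussian upper and lower bounds, and of their iterated-derivative analogues, for the right-Haar convolution kernel $p_t$; this is where the prefactor $\delta^{1/2}$ enters and is the content imported from~\cite{BPTV}. Once these estimates are in hand, (iii) and (iv) are algebraic consequences of~\eqref{ptgamma_pt} and \eqref{derivatacarattere}.
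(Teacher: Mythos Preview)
Your proposal is correct and follows the same route as the paper, which proves this lemma entirely by citation: (i) and (ii) are referred to \cite[Propositions~3.1 and~5.7]{HMM}, (iii) to~\eqref{ptgamma_pt} together with \cite[p.~150]{PV}, and (iv) directly to \cite[Lemma~2.3]{BPTV}. Your arguments simply unpack these references---in particular your proof of (ii) via the horizontal gradient of $\log\chi$ and the one-parameter subgroup in the direction $\sum_j(c_j/\|X\|)X_j$ is the natural argument behind the cited result, and your derivations of (iii) and (iv) from~\eqref{ptgamma_pt} and the Leibniz rule are exactly how one extracts them from the estimates for $p_t$.
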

\begin{proof}
For a proof of~(i) and~(ii), see~\cite[Propositions 3.1 and 5.7]{HMM}. Property~(iii) follows from~\eqref{ptgamma_pt} and~\cite[p.150]{PV}. For property (iv), see~\cite[Lemma 2.3]{BPTV}. 
\end{proof}
From now on, we set $c_3= c_1/c_2$, where $c_1$ and $c_2$ are the constants appearing in Lemma~\ref{estimates_ptgamma}~(iii).
\begin{lemma}\label{pointwiseestheat2}
Let $h, k\in \N$. Then
\begin{itemize}
\item[(i)] for every $0<\kappa' <\kappa$, there exists $C(\kappa/\kappa ')>0$ such that
\[
|\e^{-t \Delta_\chi} g | \lesssim C(\kappa/\kappa ') \e^{-\kappa c_3 t_0\Delta_\chi}|g| \qquad \forall t_0 \in (0,1),\, t\in [\kappa' t_0,\kappa t_0],
\]
\item[(ii)] there exists $a_h>0$ such that
\begin{equation}\label{stimeDeltaX}
|X_J  \e^{-t \Delta_\chi} g |\lesssim t^{-\frac{h}{2}} \e^{-a_h t \Delta_\chi}|g|\qquad \forall t\in (0,1), \, J\in \mathscr{I}^h,
\end{equation}
\item[(iii)] there exists $a_{k,h}>0$ such that  
\begin{equation}\label{stimeXDelta}
|\Delta_\chi^k  \e^{-t \Delta_\chi} X_J g |\lesssim t^{-(k+ \frac h2)} \e^{-a_{k,h} t \Delta_\chi}|g|\qquad \forall t\in (0,1), \, J\in \mathscr{I}^h,
\end{equation}
\end{itemize}
where $g$ is any measurable function in $S'(G)$.
\end{lemma}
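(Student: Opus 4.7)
The plan is to handle the three parts separately, using the Gaussian heat-kernel estimates of Lemma~\ref{estimates_ptgamma} as the main tool.

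For~(i), since $\e^{-t\Delta_\chi}g=g\ast p_t^\chi$ and $p_t^\chi\geq 0$, it suffices to prove the pointwise comparison $p_t^\chi(y)\lesssim C(\kappa/\kappa')\,p_{\kappa c_3 t_0}^\chi(y)$ for $t\in[\kappa' t_0,\kappa t_0]$, and then convolve against $|g|$. Combining the upper bound for $p_t^\chi$ and the lower bound for $p_{\kappa c_3 t_0}^\chi$ provided by Lemma~\ref{estimates_ptgamma}(iii), the $(\delta\chi^{-1})^{1/2}$ factors cancel exactly, and since $c_3=c_1/c_2$ the Gaussian exponent becomes
\[
-\frac{c_2|y|^2}{t}+\frac{c_1|y|^2}{\kappa c_3 t_0}=c_2|y|^2\Bigl(\frac{1}{\kappa t_0}-\frac{1}{t}\Bigr)\leq 0\qquad\text{for } t\leq \kappa t_0.
\]
The volume ratio $V(\sqrt{\kappa c_3 t_0})/V(\sqrt t)$ is bounded by a constant depending only on $\kappa/\kappa'$ via~\eqref{pallepiccole}, since both radii lie in a fixed bounded interval for $t_0\in(0,1)$.

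For~(ii), left-invariance lets $X_J$ commute past the right convolution, so $X_J(g\ast p_t^\chi)=g\ast X_J p_t^\chi$. Applying Lemma~\ref{estimates_ptgamma}(iv) and then choosing $a_h>0$ so that the Gaussian constant $b_h$ dominates $c_1/a_h$ (and $a_h t$ stays in a range where Lemma~\ref{estimates_ptgamma}(iii) still provides a matching lower Gaussian for $p_{a_h t}^\chi$), the $(\delta\chi^{-1})^{1/2}$ factors cancel, the volume factor is absorbed, and one obtains $|X_J p_t^\chi(y)|\lesssim t^{-h/2}p_{a_h t}^\chi(y)$ pointwise; convolving against $|g|$ yields~\eqref{stimeDeltaX}.

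For~(iii), the obstacle is that $X_J$ now acts on $g$ rather than on the kernel, so the argument of~(ii) does not apply directly. My plan is a duality argument based on the self-adjointness of $\Delta_\chi^k\e^{-t\Delta_\chi}$ and of $\e^{-s\Delta_\chi}$ on $L^2(\mu_\chi)$. From~\eqref{derivatacarattere} one has $\int X_j h\,d\mu_\chi=-c_j\int h\,d\mu_\chi$, so the formal adjoint of $X_j$ on $L^2(\mu_\chi)$ is $-(X_j+c_j)$; hence $X_J^\ast$ is a linear combination of left-invariant monomials $X_I$ with $|I|\leq h$ and constant coefficients. For a nonnegative test function $\phi\in C_c^\infty(G)$ I would write
\[
\int (\Delta_\chi^k\e^{-t\Delta_\chi}X_J g)\,\phi\,d\mu_\chi=\int g\cdot X_J^\ast(\Delta_\chi^k\e^{-t\Delta_\chi}\phi)\,d\mu_\chi.
\]
An auxiliary estimate of the form $|X_I\Delta_\chi^k\e^{-t\Delta_\chi}\phi|\lesssim t^{-k-|I|/2}\e^{-at\Delta_\chi}\phi$ is proved exactly as in~(ii): expand $\Delta_\chi^k$ as a linear combination of monomials $X_L$ with $|L|\leq 2k$, so that $X_I\Delta_\chi^k p_t^\chi$ is a sum of $X_{L'}p_t^\chi$ with $|L'|\leq |I|+2k$, and apply Lemma~\ref{estimates_ptgamma}(iv) to each term. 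Summing the finitely many terms composing $X_J^\ast$ and taking the worst power of $t$ (which is the one with $|I|=h$) gives $|X_J^\ast\Delta_\chi^k\e^{-t\Delta_\chi}\phi|\lesssim t^{-k-h/2}\e^{-at\Delta_\chi}\phi$. Finally, by the self-adjointness of $\e^{-at\Delta_\chi}$ on $L^2(\mu_\chi)$,
\[
\int |g|\,\e^{-at\Delta_\chi}\phi\,d\mu_\chi=\int (\e^{-at\Delta_\chi}|g|)\,\phi\,d\mu_\chi,
\]
and testing against arbitrary $\phi\geq 0$ yields~\eqref{stimeXDelta} with $a_{k,h}=a$.

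The main obstacle will be the careful tracking of the Gaussian constants in~(ii) and the auxiliary step of~(iii): at each application one must guarantee that the Gaussian upper bound from Lemma~\ref{estimates_ptgamma}(iv) is dominated by the lower Gaussian of $p_{a t}^\chi$, which is precisely the step where the definition of $c_3=c_1/c_2$ enters. A minor additional technicality in~(iii) is a density/approximation argument to justify the duality pairing for general $g\in\mathcal{S}'(G)$.
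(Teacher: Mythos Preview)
Your proofs of~(i) and~(ii) coincide with the paper's: both reduce to the pointwise kernel comparisons $p_t^\chi\lesssim p_{\kappa c_3 t_0}^\chi$ and $|X_J p_t^\chi|\lesssim t^{-h/2}p_{a_h t}^\chi$, obtained exactly as you describe from the two-sided Gaussian bounds in Lemma~\ref{estimates_ptgamma}.

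For~(iii) your route is genuinely different. The paper does not use $L^2(\mu_\chi)$-duality; instead it moves $X_J$ off $g$ by a convolution identity,
\[
X_J g * \Delta_\chi^k p_t^\chi=(-1)^{|J|}\,g * \{X_J[((\Delta_\chi^k p_t^\chi)\delta^{-1})^{\vee}]\}^{\vee}\delta,
\]
and then computes the resulting kernel explicitly via the relation $\widecheck{p}_t^\chi=(\delta^{-1}\chi)p_t^\chi$ (coming from the symmetry of the kernel of $\e^{-t\Delta}$) together with~\eqref{derivatacarattere}. This reduces everything to a finite sum of terms of the form $(\delta\chi^{-1})(X_I\Delta_\chi^k p_t^\chi)^{\vee}$, to which Lemma~\ref{estimates_ptgamma}(iv) applies directly. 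Your approach instead computes the adjoint $X_J^\ast$ on $L^2(\mu_\chi)$, applies the part-(ii) kernel bound to $X_J^\ast\Delta_\chi^k\e^{-t\Delta_\chi}\phi$ for a nonnegative test function, and then passes from the weak inequality to the pointwise one. Both arguments are valid; the paper's has the advantage of staying entirely at the kernel level (so no density or weak-to-pointwise step is needed and the case of distributional $g$ is automatic), while yours is conceptually cleaner and avoids the somewhat opaque convolution identity involving the modular function. In your version, the two points that deserve a line of justification are: (a) that the finitely many constants $a_I$ arising from the different $X_I$ in the expansion of $X_J^\ast$ can be replaced by a single $a_{k,h}$ via part~(i); and (b) the passage from $\bigl|\int F\phi\,d\mu_\chi\bigr|\lesssim\int H\phi\,d\mu_\chi$ for all $\phi\geq 0$ to $|F|\lesssim H$ a.e., which is standard once you note that $F=\Delta_\chi^k\e^{-t\Delta_\chi}X_J g$ is a smooth function (being the convolution of $g$ with a Schwartz kernel).
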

\begin{proof}
To prove (i), notice that   
\begin{align*}
|\e^{-t\Delta_\chi} g | = |g* p_{t}^\chi| \leq |g|*p_{t}^\chi. 
\end{align*}
Thus, it is enough to prove that given $t_0\in (0,1)$, $0<\kappa'<\kappa$  
\begin{equation}\label{estheat1}
p_{t}^\chi(x)  \lesssim  p_{\kappa c_3 t_0}^\chi(x)\qquad \forall\, x\in G,\; \forall\, t\in [\kappa' t_0,\kappa t_0],
\end{equation}
which follows by property Lemma~\ref{estimates_ptgamma}~(iii). Indeed
\begin{align*}
p_{t}^\chi(x)
&\lesssim (\delta\chi^{-1})^{1/2}(x) t^{-\frac{d}{2}} \e^{-c_2 \frac{|x|^2}{t}} \lesssim (\delta\chi^{-1})^{1/2}(x) t_0^{-\frac{d}{2}} \e^{-c_2 \frac{|x|^2}{\kappa t_0}} \lesssim p_{\kappa c_3 t_0}^\chi(x)
\end{align*}
which proves~\eqref{estheat1} and concludes the proof of (i).

To prove~(ii), observe that
\begin{align*}
|X_J \e^{-t \Delta_\chi}  g | = |g*  X_J p_{t}^\chi| \leq |g|*| X_Jp_{t}^\chi|,
\end{align*}
and that, by Lemma~\ref{estimates_ptgamma}, there exists $b_{h}>0$ such that
\begin{equation}\label{derdeltapt}
|  X_J p_{t}^\chi(x)| 
\lesssim t^{- \frac{h}{2}} (\delta\chi^{-1})^{1/2}(x)   t^{-\frac{d}{2}} \e^{-b_{h} \frac{|x|^2}{t}}\lesssim  t^{- \frac{h}{2}} p_{a_h t}^\chi(x),
\end{equation}
with $a_h = c_1/b_{h}$.

We now prove~(iii). Observe that
\begin{align*}
\Delta_\chi^k \e^{-t \Delta_\chi}  X_J  g  
&= X_J g* \Delta_\chi^k p_{t}^\chi \\
& =(-1)^{|J|}  g * \{ X_J [(( \Delta_\chi^k p_{t}^\chi) \delta^{-1})^{\vee}]\}^\vee \delta \eqqcolon g * \tilde{p}_t^\chi.
\end{align*}
Since the integral kernel of $\e^{-t\Delta}$ is symmetric, one has $\widecheck{p}_t = \delta^{-1}p_t$, so that
\begin{equation}\label{checkpt}
\widecheck{p}_t^\chi =( \delta^{-1}\chi) p_t^\chi.
\end{equation}
Moreover $(\Delta_\chi^k p_t^\chi)^\vee = (\partial_t^k p_t^\chi)^\vee=  \partial_t^k \widecheck{p}_t^\chi$. Thus,
\[
\tilde{p}_t^\chi =  \delta \{ X_J (( \partial_t^k \widecheck{p}_{t}^\chi) \delta)\}^\vee =  \delta \{ X_J (\chi  \partial_t^k {p}_{t}^\chi)\}^\vee =  \delta\{ X_J (\chi  \Delta_\chi^k {p}_{t}^\chi)\}^\vee.
\]
Now, by~\eqref{derivatacarattere}, 
\[
X_J (\chi  \Delta_\chi^k {p}_{t}^\chi )= \sum_{0\leq |I|\leq h} c_I \chi X_I   \Delta_\chi^k {p}_{t}^\chi 
\]
for suitable coefficients $c_I$. Thus, by \eqref{derdeltapt} and \eqref{checkpt} 
\[
|\delta\{ X_J (\chi  \Delta_\chi^k {p}_{t}^\chi)\}^\vee | \lesssim (\delta\chi^{-1}) \sum_{0\leq |I|\leq h} |(X_I   \Delta_\chi^k {p}_{t}^\chi)^{\vee} | \lesssim (\delta\chi^{-1}) \sum_{0\leq |I|\leq h}  t^{-\frac{|I|}{2}} (\delta^{-1}\chi)    {p}_{a_{2k+h}t}^\chi\,,
\]
which implies~\eqref{stimeXDelta}.
\end{proof}

By Lemma~\ref{pointwiseestheat2}~(i) and~(ii) and by the $L^p$-boundedness of the heat semigroup, we also obtain the following estimates.

 \begin{lemma}\label{pointwiseestheat3}
Let $h\in\mathbb N$ and $p\in [1,\infty]$. For every $g\in L^p(\mu_{\chi})$ 
\begin{itemize}
\item[(i)] $\displaystyle \|X_J \e^{-t\Delta_\chi} g\|_{L^p(\mu_\chi)} \lesssim  t^{-\frac{h}{2}} \|g\|_{L^p(\mu_\chi)}\qquad \forall t\in (0,1), J\in \mathscr{I}^h$,
\item[(ii)]  $\displaystyle \| \e^{-t\Delta_\chi} X_J g\|_{L^p(\mu_\chi)} \lesssim  t^{-\frac{h}{2}} \|g\|_{L^p(\mu_\chi)}\qquad \forall t\in (0,1), J\in \mathscr{I}^h$.
\end{itemize}
\end{lemma}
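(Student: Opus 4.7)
The plan is to deduce both estimates from the pointwise heat-kernel bounds of Lemma~\ref{pointwiseestheat2}, combined with the $L^p$-contractivity of the heat semigroup. The latter follows from the diffusion semigroup property asserted in Lemma~\ref{estimates_ptgamma}~(i): for any $s>0$ and any nonnegative measurable function $u$,
\[
\|\e^{-s\Delta_\chi} u\|_{L^p(\mu_\chi)}\leq \|u\|_{L^p(\mu_\chi)}.
\]

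For part~(i), I would apply Lemma~\ref{pointwiseestheat2}~(ii), which yields
\[
|X_J \e^{-t\Delta_\chi} g|\lesssim t^{-h/2}\,\e^{-a_h t \Delta_\chi}|g|\qquad \forall\, t\in(0,1),\ J\in\mathscr{I}^h,
\]
then take $L^p(\mu_\chi)$ norms of both sides and apply the contractivity bound above with $s=a_h t$ and $u=|g|$ to conclude. Part~(ii) can be handled in two equivalent ways: either by invoking Lemma~\ref{pointwiseestheat2}~(iii) with $k=0$, which gives $|\e^{-t\Delta_\chi} X_J g|\lesssim t^{-h/2}\,\e^{-a_{0,h}t\Delta_\chi}|g|$ and concluding by the same contractivity argument, or by observing that since $X_J$ is left-invariant and $\e^{-t\Delta_\chi}$ acts as right convolution with the Schwartz kernel $p_t^\chi$, one has $\e^{-t\Delta_\chi} X_J g = X_J \e^{-t\Delta_\chi} g$ in $\mathcal{S}'(G)$, which reduces (ii) to (i).

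No serious obstacle arises, as the lemma is essentially a packaging of the pointwise bounds of Lemma~\ref{pointwiseestheat2} into operator-norm form. The only small technical point is to justify either the commutativity of $X_J$ with the semigroup, or the direct appeal to Lemma~\ref{pointwiseestheat2}~(iii); either path produces the desired estimate with no additional work.
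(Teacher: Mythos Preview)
Your proposal is correct and matches the paper's own argument, which simply states that the lemma follows from Lemma~\ref{pointwiseestheat2}~(i)--(ii) together with the $L^p$-boundedness of the heat semigroup. Your write-up is in fact more detailed than the paper's one-line justification, and both alternatives you give for part~(ii) (invoking Lemma~\ref{pointwiseestheat2}~(iii) with $k=0$, or using that left-invariant $X_J$ commutes with right convolution by $p_t^\chi$) are valid.
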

We now consider an estimate where only the left measure $\lambda$ is involved.

\begin{lemma}\label{normaetdeltaq}
Let $1\leq p\leq p_1\leq \infty$ and $r\geq 1$ be such that $\frac{1}{p} + \frac{1}{r} = 1+ \frac{1}{p_1}$. Then for every $g\in L^p(\lambda)$
\[
\| \e^{-t\Ls} g\|_{L^{p_1}(\lambda)} \lesssim t^{\frac{d}{2}\left( \frac{1}{r}-1\right)} \|g\|_{L^p(\lambda)} \qquad \forall t\in (0,1).
\]
\end{lemma}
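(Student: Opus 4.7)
The plan is to establish this $L^p(\lambda)\to L^{p_1}(\lambda)$ estimate by Riesz--Thorin interpolation between two endpoint bounds: an $L^1(\lambda)\to L^{\infty}(\lambda)$ bound with operator norm $\lesssim V(\sqrt t)^{-1}\approx t^{-d/2}$ and the $L^p(\lambda)\to L^p(\lambda)$ contractivity of $\e^{-t\Ls}$.

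First I would establish the $L^1(\lambda)\to L^\infty(\lambda)$ bound. Applying Lemma~\ref{estimates_ptgamma}~(iii) with $\chi=\delta$ (so that the prefactor $(\delta\chi^{-1})^{1/2}\equiv 1$) gives $\|p_t^\delta\|_{L^\infty(G)}\lesssim V(\sqrt t)^{-1}$. For any $g\in L^1(\lambda)$ and $x\in G$,
\[
|\e^{-t\Ls}g(x)| = |g*p_t^\delta(x)| \le \int_G |g(xy^{-1})|\,p_t^\delta(y)\,\dd\rho(y) \le \|p_t^\delta\|_{L^\infty}\!\int_G |g(xy^{-1})|\,\dd\rho(y).
\]
Using the identity $\int_G f(y^{-1})\,\dd\rho(y)=\int_G f(y)\,\dd\lambda(y)$ and the left-invariance of $\lambda$, the inner integral equals $\|g\|_{L^1(\lambda)}$, independently of $x$. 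By~\eqref{pallepiccole} we conclude $\|\e^{-t\Ls}g\|_{L^\infty}\lesssim t^{-d/2}\|g\|_{L^1(\lambda)}$ for $t\in(0,1)$.

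Second, by Lemma~\ref{estimates_ptgamma}~(i) the semigroup $(\e^{-t\Ls})_{t>0}$ is a diffusion semigroup on $(G,\lambda)$, so it is a contraction on $L^{\pi}(\lambda)$ for every $\pi\in[1,\infty]$. Now fix $p$ and $p_1$ with $1\le p\le p_1\le\infty$ and pick $\pi\in[1,\infty]$ so that the point $(1/p,1/p_1)$ lies on the segment in the Riesz--Thorin diagram joining $(1,0)$ to $(1/\pi,1/\pi)$; concretely, $\pi = p_1-p_1/p+1\in[1,\infty]$ works. Riesz--Thorin interpolation between $\e^{-t\Ls}\colon L^1(\lambda)\to L^\infty(\lambda)$ and $\e^{-t\Ls}\colon L^\pi(\lambda)\to L^\pi(\lambda)$ then yields $\e^{-t\Ls}\colon L^p(\lambda)\to L^{p_1}(\lambda)$ with norm $\lesssim t^{-d(1-\theta)/2}$, where $\theta$ is the Riesz--Thorin parameter. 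A short bookkeeping check shows $1-\theta=1/p-1/p_1$, so that the exponent equals $\tfrac{d}{2}(1/r-1)$ under the Young condition $1/p+1/r=1+1/p_1$.

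There is no substantive obstacle here; the only thing to handle with mild care is the change of variables relating $\rho$-integrals and $\lambda$-integrals in the proof of the $L^1\to L^\infty$ bound, and the selection of the auxiliary exponent $\pi$ which certifies that every pair $(p,p_1)$ with $p\le p_1$ indeed falls on some valid interpolation segment.
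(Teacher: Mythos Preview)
Your argument is correct. The $L^1(\lambda)\to L^\infty$ bound follows exactly as you write (the change of variables $\int_G|g(xy^{-1})|\,\dd\rho(y)=\int_G|g(xw)|\,\dd\lambda(w)=\|g\|_{L^1(\lambda)}$ is fine), and the Riesz--Thorin step is legitimate: your choice $\pi=p_1(1-1/p)+1$ gives $\theta=\pi/p_1=1-(1/p-1/p_1)\in[0,1]$ and $1-\theta=1/p-1/p_1$, matching the stated exponent. The only cosmetic caveat is that your formula for $\pi$ should be read as $\pi=\infty$ when $p_1=\infty$ and $p>1$, and the case $p=1$, $p_1=\infty$ is simply the endpoint itself.

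This is, however, genuinely different from the paper's proof. The paper does not interpolate at all: it invokes a Young-type convolution inequality on nonunimodular groups (from Hewitt--Ross), namely
\[
\|g*p_t^\delta\|_{L^{p_1}(\lambda)} \leq \|g\|_{L^p(\lambda)}\,\|\widecheck{p}_t^\delta\|_{L^r(\lambda)}^{r/p'}\,\|p_t^\delta\|_{L^r(\lambda)}^{r/p},
\]
and then computes $\|p_t^\delta\|_{L^r(\lambda)}\lesssim t^{\frac d2(\frac1r-1)}$ (and similarly for $\widecheck{p}_t^\delta$) directly from the Gaussian bounds of Lemma~\ref{estimates_ptgamma}~(iii) via a dyadic annulus decomposition. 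Your route is shorter and uses only the pointwise upper bound at a single point ($\|p_t^\delta\|_\infty$) plus the abstract diffusion-semigroup contractivity; the paper's route, by contrast, produces the sharper intermediate information $\|p_t^\delta\|_{L^r(\lambda)}\approx t^{\frac d2(\frac1r-1)}$ for all $r$, at the cost of an explicit kernel estimate. Either is adequate for the lemma as stated.
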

\begin{proof}
Arguing as in the proof of~\cite[(20.18)]{HewittRoss}, we have
\[
\|g*p_t^\delta\|_{L^{p_1}(\lambda)} \leq \|g\|_{L^p(\lambda)}\left( \| \widecheck{p}_t^\delta\|_{L^r(\lambda)}^{r/p'} \|p_t^\delta\|_{L^r(\lambda)}^{r/p}\right)
\]
so that it is enough to prove that
\[
\| p^\delta_t\|_{L^r(\lambda)}\lesssim t^{\frac{d}{2}\left( \frac{1}{r}-1\right)},\qquad  \| \widecheck{p}^\delta_t\|_{L^r(\lambda)} \lesssim t^{\frac{d}{2}\left( \frac{1}{r}-1\right)}.
\]
Observe now that by estimates~\eqref{pallepiccole} and~\eqref{pallegrandi}
\begin{equation}\label{rapportovolumipalle}
\frac{V(2^{k+1}\sqrt{t})}{V(\sqrt{t})} \lesssim 2^{dk} \e^{D 2^{k}} \qquad \forall \, k\geq 0, \; t\in (0,1).
\end{equation}
Thus, if for every $k\geq 1$ we denote with $A_{k,t}$ the annulus $B_{2^k\sqrt{t}} \setminus B_{2^{k-1}\sqrt{t}} $,  by Lemma~\ref{estimates_ptgamma}~(ii) and~(iii) there exist positive constants $c$ and $C$ such that
\begin{align*}
\| p_t^\chi \|_{L^r(\lambda)}^r &\lesssim   \int_{B_{\sqrt{t}}}V(\sqrt t)^{-r}\,\dd \lambda + \sum_{k=1}^\infty \e^{-c r 2^{2k}} \e^{C r 2^k\sqrt{t}}  \int_{A_{k,t}}  V(\sqrt t)^{-r} \, \dd \lambda \lesssim t^{-\frac{d}{2}(r-1)}.
%&\lesssim t^{-\frac{d}{2}} \left\{t^{\frac{d}{2}} + \sum_{k=1}^\infty \e^{-br2^{2k}} \e^{ (r \|X\| +D))2^k} t^{\frac{d}{2}} \right\}^{1/r}
\end{align*}
Thanks to~\eqref{checkpt}, the estimate for $\widecheck{p}_t^\chi$ is similar and omitted.
\end{proof}

We shall need several variants of~\cite[Proposition 8]{CRTN}.

\begin{proposition}\label{contCRTN}
Let $p\in(1,\infty)$, $q\in [1, \infty]$ and $0<\kappa'<\kappa$. Then, for every sequence of measurable functions $(t_j)$ such that $t_j(x) \in [\kappa'2^{-j},\kappa 2^{-j}]$ for every $j\in \N$ and $x\in G$, and every sequence $(f_j)$ of measurable functions in $\mathcal S'(G)$,
\[
\Bigg\| \left( \sum_{j=1}^\infty |\e^{-t_j \Delta_\chi} f_j|^q\right)^{1/q}\Bigg\|_{L^p(\mu_\chi)} \lesssim \Bigg\| \left( \sum_{j=1}^\infty |f_j|^q\right)^{1/q}\Bigg\|_{L^p(\mu_\chi)},
\]
with the obvious modification when $q=\infty$, where $\e^{-t_j\Delta_\chi}f$ denotes the function $x\mapsto f*p_{t_j(x)} (x)$.
\end{proposition}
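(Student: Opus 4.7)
The plan is to reduce the problem to the case of constant dyadic scales $s_j = \kappa c_3\, 2^{-j}$ by invoking Lemma~\ref{pointwiseestheat2}(i). Since $\e^{-t_j(x)\Delta_\chi} f_j(x)= (f_j*p_{t_j(x)}^\chi)(x)$ and $t_j(x) \in [\kappa' 2^{-j}, \kappa 2^{-j}]$, the heat-kernel comparison appearing in the proof of that lemma yields, pointwise at every $x$,
\[
\bigl|\e^{-t_j(x)\Delta_\chi}f_j(x)\bigr| \leq \bigl(|f_j|*p_{t_j(x)}^\chi\bigr)(x) \lesssim \bigl(|f_j|*p_{s_j}^\chi\bigr)(x) = \bigl(\e^{-s_j\Delta_\chi}|f_j|\bigr)(x).
\]
After this reduction, the claim becomes a Fefferman--Stein-type vector-valued bound for the heat semigroup at the fixed scales $s_j$ applied to nonnegative inputs.

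The key ingredient is then a uniform pointwise domination
\[
\e^{-s\Delta_\chi}f(x) \lesssim M f(x), \qquad s \in (0,1),
\]
where $M$ is a local Hardy--Littlewood-type maximal operator on the locally doubling metric measure space $(G, d_C, \mu_\chi)$. I would establish it by splitting the convolution integral along dyadic annuli $A_{k,s} = B_{2^k\sqrt{s}}\setminus B_{2^{k-1}\sqrt{s}}$, extracting from Lemma~\ref{estimates_ptgamma}(iii) a factor $\e^{-c_2 2^{2(k-1)}}$ on each annulus, controlling the character density $(\delta\chi^{-1})^{1/2}$ via~\eqref{localdoubmugamma} on small scales and absorbing its at most exponential growth on large scales into the Gaussian factor, and comparing the resulting annular $\rho$-averages with $\mu_\chi$-averages through the volume estimates~\eqref{pallepiccole}--\eqref{pallegrandi}. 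The Gaussian decay in $k$ dominates both the polynomial and the exponential volume growth recorded in~\eqref{rapportovolumipalle}, making the resulting series summable uniformly in $s$.

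Once this pointwise domination is available, the cases $q \in (1,\infty]$ follow from the Fefferman--Stein vector-valued maximal inequality for $M$ on $(G,d_C,\mu_\chi)$, valid for $p \in (1,\infty)$ on locally doubling spaces: indeed
\[
\Bigl(\sum_j \bigl|\e^{-s_j\Delta_\chi}|f_j|\bigr|^q\Bigr)^{1/q} \leq \Bigl(\sum_j (M|f_j|)^q\Bigr)^{1/q}.
\]
The endpoint $q=1$ I would handle by duality: for any $\varphi \in L^{p'}(\mu_\chi)$ with $\varphi\geq 0$, the selfadjointness of $\e^{-s\Delta_\chi}$ on $L^2(\mu_\chi)$ (extended to the $L^p$--$L^{p'}$ pairing by density, using the contractivity from Lemma~\ref{estimates_ptgamma}(i)) and the pointwise bound applied to $\varphi$ give
\[
\int \sum_j (\e^{-s_j\Delta_\chi}|f_j|)\,\varphi\, \dd\mu_\chi = \sum_j \int |f_j|\, (\e^{-s_j\Delta_\chi}\varphi)\, \dd\mu_\chi \lesssim \int \Bigl(\sum_j |f_j|\Bigr) M\varphi\, \dd\mu_\chi,
\]
and H\"older combined with the $L^{p'}$-boundedness of $M$ concludes.

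The main obstacle I expect is the pointwise domination step. Because $p_s^\chi$ is integrated against $\dd\rho$ while the natural local maximal function is with respect to $\mu_\chi$, and because the kernel carries the character factor $(\delta\chi^{-1})^{1/2}$ on a group of possibly exponential volume growth, some care is required to formulate the correct maximal operator and to ensure that all constants remain uniform in $s\in(0,1)$. This is essentially the strategy of~\cite[Proposition~8]{CRTN}, here adapted to the non-unimodular, character-weighted setting by means of the reduction to constant scales afforded by Lemma~\ref{pointwiseestheat2}(i).
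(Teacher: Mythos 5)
Your proposal takes a genuinely different route from the paper, but it has a gap at its central step.

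The critical issue is the pointwise domination $\e^{-s\Delta_\chi}f(x) \lesssim Mf(x)$, uniformly in $s\in(0,1)$, with $M$ a \emph{local} Hardy--Littlewood maximal operator on $(G,d_C,\mu_\chi)$. This cannot hold. A local maximal function only sees the values of $f$ on a ball of bounded radius around $x$, whereas the heat kernel $p_s^\chi$ (for $s$ bounded away from $0$) is supported on all of $G$, so $\e^{-s\Delta_\chi}f(x)$ depends on values of $f$ arbitrarily far from $x$. If $f$ is supported outside $B(x,R)$ for the $R$ defining $M$, then $Mf(x)=0$ while $\e^{-s\Delta_\chi}f(x)\neq 0$ in general. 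Your annular decomposition controls each annulus $A_{k,s}$ individually, and the Gaussian tail does dominate the exponential volume and character growth; but once $2^k\sqrt{s}>R$ those far annuli cannot be absorbed into a local maximal average, and on a group of possibly exponential volume growth (no global doubling) the \emph{global} HL maximal function is not $L^p(\mu_\chi)$-bounded, so enlarging $M$ is not an option. This is not a matter of ``care'' in formulating the maximal operator; it is a structural obstruction that forces a different argument. Consequently, the Fefferman--Stein inequality for $M$ is not the right tool here, and the same gap resurfaces in your $q=1$ duality argument where you pass the semigroup to the weight $\varphi$ and then dominate by $M\varphi$.

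The paper avoids this entirely. For $1<p\leq q$, it observes that the genuine maximal operator $T_\chi f(x)=\sup_j|\e^{-t_j(x)\Delta_\chi}f(x)|$ is dominated by $T^*_\chi f(x)=\sup_{t>0}|\e^{-t\Delta_\chi}f(x)|$, which is $L^p(\mu_\chi)$-bounded by Stein's maximal theorem for symmetric diffusion semigroups — a soft result that requires no doubling, local or global — and $T_\chi$ is \emph{linearizable}, so the vector-valued bound follows from the Garc\'ia-Cuerva--Rubio de Francia result for linearizable operators. For $p>q$, it uses the CRTN weighted-duality argument (Jensen, self-adjointness, then $T^*_\chi w$ on the dual side, again controlled by Stein). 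Your proposal contains the seeds of both ideas (the reduction to fixed scales via Lemma~\ref{pointwiseestheat2}(i), and the duality scheme), but substituting the Stein diffusion-semigroup maximal operator for the HL maximal function is precisely the key move that makes the argument work in this non-doubling setting, and your proposal does not make it. If you wanted to salvage the HL route, you would have to split the kernel into a bounded-radius piece and a far tail, bound the local piece by $M$, and then prove a separate vector-valued $L^p(\mu_\chi)$-convolution estimate for the uniform tail — a substantial additional argument that the paper's strategy renders unnecessary.
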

\begin{proof}
The proof is inspired to that of~\cite[Proposition 8]{CRTN}, which covers the case when $t_j$ is constant for every $j$. 

Consider the operator
\[
T_\chi f(x) \coloneqq \sup_{j\in \N} |\e^{-t_j(x)\Delta_\chi}f(x)|,
\]
which is linearizable according to~\cite[Definition 1.20, p.\ 481]{GCRDF}, and bounded on $L^p(\mu_\chi)$, since
\[
T_\chi f(x) \leq \sup_{t>0} |\e^{-t\Delta_\chi}f(x)| \eqqcolon T^*_\chi f(x)
\]
and $T_\chi^*$ is bounded on $L^p(\mu_\chi)$ by the maximal theorem of~\cite[p.\ 73]{Stein}. Moreover, $|T_\chi f|\leq T_\chi |f|$ for $p_t^\chi$ is positive for any $t$. By applying~\cite[Corollary 1.23, p.\ 482]{GCRDF} to $T_\chi$ and observing that $|\e^{-t_j\Delta_\chi} f_j |\leq T_\chi |f_j|$, the conclusion follows when $1<p\leq q$.

We now consider the case when $p>q$. By Lemma~\ref{pointwiseestheat2}, for every function $g\geq 0$
\[
\e^{-t_j(x)\Delta_\chi} g(x) \lesssim \e^{-\kappa c_3 2^{-j}\Delta_\chi} g(x).
\]
We can follow the same argument of~\cite{CRTN}. Indeed, let $r$ be such that $\frac{1}{r}= 1- \frac{q}{p}$ and let $w\geq 0$. Then, since $|\e^{-t_j(x) \Delta_\chi}f(x)|^q \leq \e^{-t_j(x) \Delta_\chi}|f(x)|^q$ by Jensen's inequality, and since $\e^{-t\Delta_\chi}$ is symmetric on $L^2(\mu_\chi)$ for every $t>0$,
\begin{align*}
\left| \int_G |\e^{-t_j(x)\Delta_\chi}f(x)|^qw(x) \, \dd \mu_\chi(x)\right| 
 &\leq \int_G \e^{-t_j(x)\Delta_\chi}|f(x)|^q w(x) \, \dd \mu_\chi(x)\\
& \lesssim \int_G \e^{-\kappa c_3 2^{-j}\Delta_\chi}|f(x)|^q w(x) \, \dd \mu_\chi(x) \\
& = \int_G |f(x)|^q \e^{-\kappa c_3 2^{-j}\Delta_\chi} w(x) \, \dd \mu_\chi(x)\\
& \leq \int_G |f(x)|^q T^*_\chi w(x) \, \dd \mu_\chi(x).
\end{align*}
The proof now can be concluded exactly as in~\cite[pp.\ 307-308]{CRTN} using that $T^*_\chi$ is bounded on $L^p(\mu_\chi)$.
\end{proof}

We now prove a very useful proposition, which is an integral analogue of Proposition~\ref{contCRTN}.

\begin{proposition}\label{CRTNintegrale}
Let $p\in (1,\infty)$, $q\in [1,\infty]$, $c>0$ and $\ell,r \in \R$. Then
\[
\Bigg\| \left( \int_0^1 \left(t^\ell \e^{-ct\Delta_\chi} |G(t,\cdot)|\right)^q \, \frac{\dd t}{t} \right)^{1/q}\Bigg\|_{L^p(\mu_\chi)} \lesssim \Bigg\| \left(  \int_0^1 \left(t^\ell | G(t,\cdot)  |\right)^q \, \frac{\dd t}{t} \right)^{1/q}\Bigg\|_{L^p(\mu_\chi)},
\]
with the usual modification when $q=\infty$, where either $g$ is a measurable function in $\mathcal S'(G)$ and 
\begin{itemize}
\item[(i)] $G(t,x)=\e^{-t\Delta_\chi}g(x)$,
\item[(ii)] $G(t,x)=g(x)$,
\end{itemize}
or $g:(0,1)\times G\rightarrow [0,\infty)$ is such that $g(u,\cdot)$ is a measurable function in $\mathcal S'(G)$ for all $u\in (0,1)$ and 
\begin{itemize}
\item[(iii)] $G(t,x)=  \int_0^1 (u+t)^r g(u,x) \, \dd u  $,
\item[(iv)] $G(t,x)=  \int_t^1  g(u,x) \, \dd u  $.
\end{itemize}
\end{proposition}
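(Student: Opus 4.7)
The key observation is that all four cases (i)--(iv) reduce to a single general inequality that does not depend on the particular structure of $G$: for every measurable $h:(0,1)\times G\to[0,\infty)$ such that $h(t,\cdot)\in\mathcal S'(G)$ for almost every $t\in(0,1)$,
\begin{equation*}
\Bigg\|\bigg(\int_0^1(\e^{-ct\Delta_\chi}h(t,\cdot))^q\,\frac{\dd t}{t}\bigg)^{1/q}\Bigg\|_{L^p(\mu_\chi)} \lesssim \Bigg\|\bigg(\int_0^1 h(t,\cdot)^q\,\frac{\dd t}{t}\bigg)^{1/q}\Bigg\|_{L^p(\mu_\chi)}, \tag{$\star$}
\end{equation*}
with the obvious modification when $q=\infty$. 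Indeed, in each of (i)--(iv), taking $h(t,x):=t^\ell|G(t,x)|$ recovers the claimed inequality, since the scalar factor $t^\ell$ commutes with the spatial operator $\e^{-ct\Delta_\chi}$. The proof thus reduces to establishing $(\star)$, which I would approach along the lines of Proposition~\ref{contCRTN}, distinguishing the cases $p>q$ and $1<p\leq q$.

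For the case $p>q$, I would argue by duality on the pair $L^{p/q}(\mu_\chi)$--$L^{(p/q)'}(\mu_\chi)$. Picking a nonnegative $w$ in the unit ball of $L^{(p/q)'}(\mu_\chi)$, Jensen's inequality applied pointwise to the positive contraction $\e^{-ct\Delta_\chi}$ yields $(\e^{-ct\Delta_\chi}h(t,\cdot))^q\leq \e^{-ct\Delta_\chi}(h(t,\cdot)^q)$; pairing with $w$, then applying Fubini, the self-adjointness of $\e^{-ct\Delta_\chi}$ on $L^2(\mu_\chi)$, and the majorization $\e^{-ct\Delta_\chi}w\leq T^*_\chi w$ with $T^*_\chi f:=\sup_{s>0}\e^{-s\Delta_\chi}|f|$, gives
\begin{equation*}
\int_G\!\int_0^1(\e^{-ct\Delta_\chi}h)^q w\,\frac{\dd t}{t}\,\dd\mu_\chi \,\leq\, \int_G\bigg(\int_0^1 h^q\,\frac{\dd t}{t}\bigg)\,T^*_\chi w\,\dd\mu_\chi.
\end{equation*}
H\"older's inequality and the $L^{(p/q)'}(\mu_\chi)$-boundedness of $T^*_\chi$ (valid since $(p/q)'>1$, via Stein's maximal theorem~\cite[p.~73]{Stein} applied to the symmetric diffusion semigroup of Lemma~\ref{estimates_ptgamma}(i)) then deliver $(\star)$ upon taking the supremum over $w$ and the $1/q$-th root.

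For $1<p\leq q$, the pointwise bound $\e^{-ct\Delta_\chi}h(t,\cdot)(x)\leq T^*_\chi h(t,\cdot)(x)$ reduces $(\star)$ to the continuous vector-valued maximal inequality for $T^*_\chi$, which is the integral analog of~\cite[Cor.~1.23, p.~482]{GCRDF} applied to the linearizable sublinear operator $T^*_\chi$, bounded on $L^r(\mu_\chi)$ for every $r>1$. This continuous inequality can be derived from the discrete version, i.e., from Proposition~\ref{contCRTN}, by partitioning $(0,1)=\bigsqcup_{j\geq 0}[2^{-j-1},2^{-j})$ and using Lemma~\ref{pointwiseestheat2}(i) to replace $\e^{-ct\Delta_\chi}$ by $\e^{-cc_3 2^{-j}\Delta_\chi}$ up to a multiplicative constant on each dyadic interval. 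The case $q=\infty$ follows directly from the cruder bound $\sup_t\e^{-ct\Delta_\chi}h(t,\cdot)\leq T^*_\chi(\sup_t|h(t,\cdot)|)$ and the scalar $L^p$-boundedness of $T^*_\chi$. The main technical obstacle of the whole argument lies in this last step: transferring the discrete vector-valued inequality to a continuous one requires careful bookkeeping to ensure that the dyadic comparison of heat semigroups across scales provided by Lemma~\ref{pointwiseestheat2}(i) does not accumulate a $j$-dependent loss that would destroy the summability.
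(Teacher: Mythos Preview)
Your approach is correct and genuinely different from the paper's. The paper does \emph{not} prove the general inequality $(\star)$; instead it treats each of (i)--(iv) separately, exploiting the specific structure of $G(t,\cdot)$ in each case. For instance, in case~(i) the paper discretizes, applies Proposition~\ref{contCRTN} to obtain a bound by $\big\|(\sum_j (2^{-j\ell}|\e^{-2^{-j}\Delta_\chi}g|)^q)^{1/q}\big\|_{L^p}$, and then has to climb back to the continuous integral via a mean-value-theorem trick that relies on the semigroup identity $\e^{-2^{-j}\Delta_\chi}=\e^{-(2^{-j}-s_j)\Delta_\chi}\e^{-s_j\Delta_\chi}$; in case~(iii) it uses that $(u+t)\approx(u+2^{-j})$ on each dyadic block. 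Your route bypasses all of this structure-dependent bookkeeping: once $(\star)$ is in hand for arbitrary nonnegative $h$, every case is a one-line specialization. This is cleaner and more general, and your $p>q$ duality argument is in fact simpler than the corresponding step in the proof of Proposition~\ref{contCRTN} (you have no $x$-dependence in the time variable, so self-adjointness applies directly without first invoking Lemma~\ref{pointwiseestheat2}).

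One point of your write-up deserves tightening. In the range $1<p\le q$ you first (correctly) reduce $(\star)$ to the $L^q((0,1),\dd t/t)$-valued maximal inequality for $T^*_\chi$, and then propose to derive that from Proposition~\ref{contCRTN} via dyadic partitioning and Lemma~\ref{pointwiseestheat2}(i). But Proposition~\ref{contCRTN} is not the discrete $T^*_\chi$ vector-valued inequality, and after partitioning you still face integrals $\int_{2^{-j-1}}^{2^{-j}}(\e^{-c'2^{-j}\Delta_\chi}h(t,\cdot))^q\,\dd t/t$ with $h$ varying in $t$, which do not collapse to a single term $\e^{-c'2^{-j}\Delta_\chi}f_j$. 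The clean fix is to invoke \cite[Cor.~1.23, p.~482]{GCRDF} directly for $T^*_\chi$ in the $L^q((0,1),\dd t/t)$-valued setting: the proof there uses only that the target is a Banach lattice, so it applies verbatim with $\ell^q$ replaced by $L^q(\dd t/t)$ (equivalently, approximate $h$ by simple functions in $t$ and pass to the limit). With this adjustment the argument is complete; the ``technical obstacle'' you flag in your last sentence then disappears, since no scale-by-scale comparison is needed once one works with $T^*_\chi$ throughout.
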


\begin{proof}
We first prove (i) when $q<\infty$. Observe that
\begin{align*}
  \int_0^1 \left(t^\ell \e^{-ct\Delta_\chi} |\e^{-t\Delta_\chi}g|\right)^q \, \frac{\dd t}{t} &  \lesssim  \sum_{j=1}^{\infty}\int_{2^{-j}}^{2^{-j+1}} \left( 2^{-j\ell}  \e^{-ct\Delta_\chi}\e^{-(t-2^{-j})\Delta_\chi} |\e^{-2^{-j}\Delta_\chi}g|\right)^q  \, \frac{\dd t}{t}  \\
  &\lesssim  \sum_{j=1}^{\infty} \left(2^{-j\ell}     \e^{-(2c+1)c_3 2^{-j}\Delta_\chi} |\e^{-  2^{-j}\Delta_\chi}g|\right)^q    \,,
\end{align*}
where we have used the fact that $c2^{-j} \leq ct + t- 2^{-j}\leq (2c+1)2^{-j}$ for $t\in [2^{-j}, 2^{-j+1}]$  and we have applied Lemma~\ref{pointwiseestheat2}. Thus, by Proposition~\ref{contCRTN}
\begin{align*}
\Bigg\| \left( \int_0^1 \left(t^\ell \e^{-ct\Delta_\chi} |\e^{-t\Delta_\chi}g|\right)^q \, \frac{\dd t}{t} \right)^{1/q} \Bigg\|_{L^p}
&\lesssim \Bigg\| \left( \sum_{j=1}^{\infty} \left(2^{-j\ell}     \e^{-(2c+1)c_3 2^{-j}\Delta_\chi} |\e^{-  2^{-j}\Delta_\chi}g|\right)^q  \right)^{1/q} \Bigg\|_{L^p}\\
& \lesssim \Bigg\| \left( \sum_{j=1}^\infty \left( 2^{-j\ell} |\e^{-2^{-j}\Delta_\chi} g|\right)^q \right)^{1/q} \Bigg\|_{L^p}\,.
%& \lesssim  \Bigg\| \left(  \int_0^1 (t^\ell |\e^{-t\Delta_\chi}g|)^q \, \frac{\dd t}{t} \right)^{1/q}\Bigg\|_{L^p}.
\end{align*}
Observe now that, for every $j\in \N$ and $x\in G$, by the mean value theorem there exists $s_j(x)\in [2^{-j-2}, 2^{-j-1}]$ such that
\[
\int_{2^{-j-2}}^{2^{-j-1}} \left( t^{\ell}  |   \e^{-t\Delta_\chi}g(x)|\right) ^q \, \frac{\dd t}{t} \approx \left(s_j(x)^{\ell}|   \e^{-s_j(x)\Delta_\chi}g(x)|\right)^q.
\]
Then, by applying Lemma~\ref{pointwiseestheat2} and Proposition~\ref{contCRTN} to $t_j(x)=2^{-j} - s_j(x)$, we obtain
\begin{align*}
\Bigg\|\left( \sum_{j=1}^\infty ( 2^{-j\ell} |\e^{-2^{-j}\Delta_\chi} g|)^q \right)^{1/q} \Bigg\|_{L^p}
&=\Bigg\| \left( \sum_{j=1}^\infty ( 2^{-j\ell} |\e^{-(2^{-j}-s_j)\Delta_\chi} \e^{-s_j\Delta_\chi}g|)^q \right)^{1/q} \Bigg\|_{L^p}\\
&  \lesssim\Bigg\|\left(\sum_{j=1}^\infty \left( 2^{-j\ell} | \e^{-s_j\Delta_\chi} g|\right)^q \right)^{1/q} \Bigg\|_{L^p} \\
&  \approx   \Bigg\| \left(\sum_{j=1}^\infty\int_{2^{-j-2}}^{2^{-j-1}} \left( t^{\ell} |  \e^{-t\Delta_\chi} g|\right)^q \, \frac{\dd t}{t} \right)^{1/q}\Bigg\|_{L^p} \\
&  \lesssim\Bigg\| \left(\int_0^1 \left( t^{\ell} |  \e^{-t\Delta_\chi} g|\right)^q \, \frac{\dd t}{t} \right)^{1/q} \Bigg\|_{L^p}.
\end{align*}
We now prove (i) when $q=\infty$. Arguing as above,
\begin{align*}
\sup_{t\in (0,1)} t^\ell \e^{-ct\Delta_\chi} |\e^{-t\Delta_\chi} g| 
&\approx \sup_{j\geq 1} \sup_{t\in [2^{-j}, 2^{-j+1}]} 2^{-j\ell} \e^{-ct\Delta_\chi} |\e^{-t\Delta_\chi} g| \\
&\lesssim \sup_{j\geq 1} 2^{-j\ell}     \e^{-(2c+1)c_3 2^{-j}\Delta_\chi} |\e^{-  2^{-j}\Delta_\chi}g|,
\end{align*}
so that by Proposition~\ref{contCRTN}
\begin{align*}
\Big\| \sup_{t\in (0,1)} t^\ell \e^{-ct\Delta_\chi} |\e^{-t\Delta_\chi} g| \Big\|_{L^p}
&\lesssim \Big\| \sup_{j\geq 1} 2^{-j\ell}     \e^{-(2c+1)c_3 2^{-j}\Delta_\chi} |\e^{-  2^{-j}\Delta_\chi}g|\Big\|_{L^p}\\
&\lesssim \Big\| \sup_{j\geq 1} 2^{-j\ell}    |\e^{-  2^{-j}\Delta_\chi}g|\Big\|_{L^p}\\
&\leq  \Big\| \sup_{j\geq 1}  \sup_{t\in [2^{-j}, 2^{-j+1}]} 2^{-j\ell}    |\e^{-  t\Delta_\chi}g|\Big\|_{L^p}\\
& \lesssim \Big\| \sup_{t\in(0,1)} t^{\ell}    |\e^{-  t\Delta_\chi}g|\Big\|_{L^p}.
\end{align*}
The case (ii) is easier to prove and we omit the details.

To prove (iii) when $q<\infty$, we recall that $g\geq 0$ and use Proposition~\ref{contCRTN}:
\begin{align*}
&\Bigg\| \left( \int_0^1 \left(t^\ell \e^{-c t\Delta_\chi}  \int_0^1 (u+t)^r g(u,x) \, \dd u  \right)^q \, \frac{\dd t}{t} \right)^{1/q}\Bigg\|_{L^p} \\
& \qquad \qquad \lesssim \Bigg\| \left( \sum_{j=1}^\infty \left( 2^{-j\ell } \e^{-c'2^{-j}\Delta_\chi}    \int_0^1 (u+ 2^{-j})^r g(u,x) \, \dd u  \right)^q \right)^{1/q} \Bigg\|_{L^p}\\
&  \qquad \qquad \lesssim \Bigg\|\left( \sum_{j=1}^\infty \left( 2^{-j\ell }   \int_0^1 (u + 2^{-j})^r g(u,x) \, \dd u  \right)^q \right)^{1/q} \Bigg\|_{L^p}\\
& \qquad \qquad  \lesssim \Bigg\| \left( \int_0^1 \left(t^\ell   \int_0^1 (u+t)^r g(u,x) \, \dd u \right)^q \, \frac{\dd t}{t} \right)^{1/q}\Bigg\|_{L^p},
\end{align*}
since  $u+t \approx 2^{-j} +u$ if $t\in [2^{-j}, 2^{-j+1}]$. If $q=\infty$, arguing in the same fashion,
\begin{align*}
\Big\| \sup_{t\in (0,1)} t^\ell \e^{-c t\Delta_\chi}  \int_0^1 (u+t)^r g(u,x) \, \dd u \Big\|_{L^p}
&\lesssim \Big\| \sup_{j\geq 1} 2^{-j\ell} \e^{-c' 2^{-j}\Delta_\chi}  \int_0^1 (u+2^{-j})^r g(u,x) \, \dd u \Big\|_{L^p}\\
&\lesssim\Big\| \sup_{t\in (0,1)} t^{\ell}  \int_0^1 (u+t)^r g(u,x) \, \dd u \Big\|_{L^p}.
\end{align*}
The proof of~(iv) is analogous to~(iii), and we omit it.
\end{proof}

\begin{lemma}\label{bilinearCRTNint}
Let $p\in(1,\infty)$, $q\in [1, \infty]$ and $\ell>0$. Then there exists a positive constant $c$ such that for every measurable functions $f,g$ in $\mathcal S'(G)$ 
\begin{multline*}
\left\| \left( \int_0^1 \left( t^\ell \e^{-t\Delta_\chi} ( |\e^{-t\Delta_\chi}f| \cdot |\e^{-t\Delta_\chi}g|) \right)^q \, \frac{\dd t}{t} \right)^{1/q}\right\|_{L^p(\mu_\chi)} \\
\lesssim \left\| \left(  \int_0^1 \left(t^\ell ( \e^{-ct\Delta_\chi}|\e^{-t\Delta_\chi}f| )\cdot ( \e^{-ct\Delta_\chi} |\e^{-t\Delta_\chi}g|)\right)^q \, \frac{\dd t}{t} \right)^{1/q}\right\|_{L^p(\mu_\chi)},
\end{multline*}
with the usual modification when $q=\infty$.
\end{lemma}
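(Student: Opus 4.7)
The strategy is to prove the pointwise estimate
\begin{equation*}
\e^{-t\Delta_\chi}\bigl(|\e^{-t\Delta_\chi}f|\cdot|\e^{-t\Delta_\chi}g|\bigr)(x) \lesssim \bigl(\e^{-ct\Delta_\chi}|\e^{-t\Delta_\chi}f|\bigr)(x)\cdot\bigl(\e^{-ct\Delta_\chi}|\e^{-t\Delta_\chi}g|\bigr)(x) \tag{$\ast$}
\end{equation*}
valid for all $x\in G$ and $t\in(0,1)$, with $c$ a sufficiently large universal constant. Given $(\ast)$ the lemma is immediate: raise to the $q$-th power, multiply by $t^{q\ell-1}\,\dd t$, integrate, and take the $L^p(\mu_\chi)$-norm (with the obvious sup-modification when $q=\infty$). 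The hypothesis $\ell>0$ plays no further role and is presumably there to guarantee convergence in adjacent applications of this estimate in the paper.

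To prove $(\ast)$ I would first apply the Cauchy--Schwarz inequality against the probability measure $p_t^\chi(y)\,\dd\rho(y)$ (which has total mass $1$ since $\e^{-t\Delta_\chi}$ is a diffusion semigroup, Lemma~\ref{estimates_ptgamma}~(i)), obtaining
\begin{equation*}
\e^{-t\Delta_\chi}(FG)(x) \le \bigl(\e^{-t\Delta_\chi}F^2(x)\bigr)^{1/2}\bigl(\e^{-t\Delta_\chi}G^2(x)\bigr)^{1/2},
\end{equation*}
with $F:=|\e^{-t\Delta_\chi}f|$ and $G:=|\e^{-t\Delta_\chi}g|$. This reduces $(\ast)$ to the reverse-Jensen-type pointwise inequality
\begin{equation*}
\e^{-t\Delta_\chi}F^2(x) \lesssim \bigl(\e^{-ct\Delta_\chi}F(x)\bigr)^2, \tag{$\ast\ast$}
\end{equation*}
and the same statement for $G$. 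The strategy for $(\ast\ast)$ is to split the defining integral $\int F^2(xy^{-1}) p_t^\chi(y)\,\dd\rho(y)$ into contributions from $\{|y|\le A\sqrt{t}\}$ and its complement. On the exterior region the Gaussian decay $e^{-c_2|y|^2/t}\le e^{-c_2 A^2/2}e^{-c_2|y|^2/(2t)}$ from Lemma~\ref{estimates_ptgamma}~(iii) yields an extra factor that can be absorbed. On the interior region the task is to prove the Harnack-type pointwise bound
\begin{equation*}
F(xy^{-1}) \lesssim \e^{-ct\Delta_\chi}F(x), \qquad |y|\le A\sqrt{t}, \tag{$\dagger$}
\end{equation*}
whence the small-$y$ contribution to $\e^{-t\Delta_\chi}F^2(x)$ is bounded by $(\e^{-ct\Delta_\chi}F(x))^2\,\int p_t^\chi\,\dd\rho = (\e^{-ct\Delta_\chi}F(x))^2$.

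To establish $(\dagger)$ I would write $F(xy^{-1})$ as the absolute value of a convolution by means of the substitution $v=wy$, which is measure-preserving by the right-invariance of $\rho$, so that $F(xy^{-1})=\bigl|\int f(xv^{-1})p_t^\chi(vy^{-1})\,\dd\rho(v)\bigr|$. The pointwise kernel comparison $p_t^\chi(vy^{-1})\lesssim p_{ct}^\chi(v)$ for $|y|\le A\sqrt{t}$ follows from Lemma~\ref{estimates_ptgamma}~(iii) via $|vy^{-1}|^2\ge\tfrac12|v|^2-|y|^2$, the character factorisation $(\delta\chi^{-1})^{1/2}(vy^{-1})=(\delta\chi^{-1})^{-1/2}(y)(\delta\chi^{-1})^{1/2}(v)$, the local comparability of characters~\eqref{localdoubmugamma}, and the fact that the volume ratio $V(\sqrt{ct})/V(\sqrt{t})$ is bounded uniformly for $t\in(0,1)$ by combining~\eqref{pallepiccole} with~\eqref{pallegrandi}.

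The main obstacle is that the obvious route from the preceding comparison yields only $F(xy^{-1})\lesssim\e^{-ct\Delta_\chi}|f|(x)$, which for oscillatory $f$ can be far larger than $\e^{-ct\Delta_\chi}|\e^{-t\Delta_\chi}f|(x)=\e^{-ct\Delta_\chi}F(x)$; a straightforward bound $F\le\e^{-t\Delta_\chi}|f|$ is too lossy, as it destroys the cancellation that makes $F$ small in the first place. Preserving that cancellation --- keeping the absolute value outside the inner convolution throughout --- is the delicate step, and forces one to express $p_t^\chi(vy^{-1})$ itself as a convex/kernel combination of shifts $p_t^\chi(vz^{-1})$ with a density dominated by $p_{ct}^\chi(z)$, so that the outer absolute value in $F$ can be applied before averaging in $z$.
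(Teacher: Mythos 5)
The pivotal ``delicate step'' at the end of your sketch cannot be carried out. There is no nonnegative density $K(\cdot,y)$, dominated by $p^\chi_{ct}$, with $p^\chi_t(vy^{-1})=\int p^\chi_t(vz^{-1})K(z,y)\,\dd\rho(z)$: already on $\bbR^d$ with $\chi\equiv1$, taking the Fourier transform in $v$ turns this equation into $\e^{-t|\xi|^2}\e^{-iy\cdot\xi}=\e^{-t|\xi|^2}\widehat{K}(\xi,y)$, and since $\e^{-t|\xi|^2}$ never vanishes the unique solution is the Dirac mass at $y$, which is not a function. What the semigroup identity does give is $p^\chi_t(vy^{-1})=\int p^\chi_s(vz^{-1})\,p^\chi_{t-s}(zy^{-1})\,\dd\rho(z)$ for $s<t$, and then the same computation yields $|\e^{-t\Delta_\chi}f(xy^{-1})|\lesssim \e^{-ct\Delta_\chi}|\e^{-s\Delta_\chi}f|(x)$ for $|y|\leq A\sqrt t$. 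This is strictly weaker than $(\dagger)$: one has $\e^{-ct\Delta_\chi}|\e^{-t\Delta_\chi}f|\lesssim\e^{-c't\Delta_\chi}|\e^{-s\Delta_\chi}f|$ (by the triangle inequality and Lemma~\ref{pointwiseestheat2}~(i)), but the reverse inequality fails whenever $\e^{-s\Delta_\chi}f$ still carries cancellation that $\e^{-(t-s)\Delta_\chi}$ destroys, so $(\dagger)$ does not follow and the reverse--Jensen step $(\ast\ast)$ --- which is false for a general nonnegative measurable $F$ and genuinely needs the caloric structure of $F$ --- is left unproved. There is also a second gap: on the exterior region $|y|>A\sqrt t$ your Gaussian split yields only $\e^{-c_2A^2/2}\,\e^{-c''t\Delta_\chi}F^2(x)$ for some $c''>1$, and this cannot be absorbed into the left-hand side $\e^{-t\Delta_\chi}F^2(x)$ because the heat times differ, nor into the right-hand side without re-running the very reverse Jensen inequality you are trying to prove.

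The paper never claims a pointwise substitute for $(\ast)$. Its proof discretises the $t$-integral over dyadic scales $[2^{-j},2^{-j+1}]$, applies Lemma~\ref{pointwiseestheat2} to move every occurring semigroup to the common time $\approx 2^{-j}$, and then invokes Proposition~\ref{contCRTN} --- the vector-valued $L^p(\ell^q)$ boundedness of the local heat maximal operator --- to strip off the outer $\e^{-c2^{-j}\Delta_\chi}$ in $L^p$ norm, followed by a further mean-value discretisation to return to the integral form. That removal of the outer semigroup is intrinsically an $L^p$-averaged step with no pointwise counterpart; it is precisely the mechanism that replaces the pointwise inequality $(\ast)$ you are aiming for.
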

\begin{proof}
We only prove the statement when $q<\infty$. We shall apply repeatedly Lemma~\ref{pointwiseestheat2}. Since $ \e^{-t\Delta_\chi}= \e^{-(t-2^{-j-1})\Delta_\chi}\e^{-2^{-j-1}\Delta_\chi}$, using Proposition~\ref{contCRTN}, 
\begin{align*}
&\left\| \left( \int_0^1 \left( t^\ell \e^{-t\Delta_\chi} \left( |\e^{-t\Delta_\chi}f| |\e^{-t\Delta_\chi}g|\right) \right)^q \, \frac{\dd t}{t} \right)^{1/q}\right\|_{L^p} \\
& \lesssim  \left\| \left(\sum_{j=1}^\infty \int_{2^{-j}}^{2^{-j+1}} \left( 2^{-j\ell } \e^{-2c_3 2^{-j}\Delta_\chi} \left( |\e^{-t\Delta_\chi}f| |\e^{-t\Delta_\chi}g|\right) \right)^q \, \frac{\dd t}{t} \right)^{1/q}\right\|_{L^p}\\
%& =  \left\| \left(\sum_{j=0}^\infty \int_{2^{-j}}^{2^{-j+1}} \left( 2^{-j\ell } \e^{-2c_3 2^{-j}\Delta_\chi} ( |\e^{-(t-2^{-j-1})\Delta_\chi}\e^{-2^{-j-1}\Delta_\chi}f| |\e^{-(t-2^{-j-1})\Delta_\chi}\e^{-2^{-j-1}\Delta_\chi}g|) \right)^q \, \frac{\dd t}{t} \right)^{1/q}\right\|_{L^p}\\
& \lesssim  \left\| \left(\sum_{j=1}^\infty \left( 2^{-j\ell } \e^{-2c_3 2^{-j}\Delta_\chi} \left( \e^{-2c_32^{-j}\Delta_\chi} |\e^{-2^{-j-1}\Delta_\chi}f|  \cdot \e^{-2c_32^{-j}\Delta_\chi} |\e^{-2^{-j-1}\Delta_\chi}g|\right) \right)^q \right)^{1/q}\right\|_{L^p}\\
& \lesssim  \left\| \left(\sum_{j=1}^\infty \left( 2^{-j\ell } \left( \e^{-2c_32^{-j}\Delta_\chi} |\e^{-2^{-j-1}\Delta_\chi}f| \cdot \e^{-2c_32^{-j}\Delta_\chi} |\e^{-2^{-j-1}\Delta_\chi}g|\right) \right)^q \right)^{1/q}\right\|_{L^p}.
\end{align*}
Recall now that for every $t_j \in [2^{-j-3}, 2^{-j-2}]$ one has
\[
|\e^{-2^{-j-1}\Delta_\chi}f| = |\e^{-(2^{-j-1}-t_j)\Delta_\chi} \e^{-t_j\Delta_\chi}f|  \lesssim \e^{-c_32^{-j}\Delta_\chi} |\e^{-t_j\Delta_\chi}f|\,,
\]
hence
\[
\e^{-2c_32^{-j}\Delta_\chi} |\e^{-2^{-j-1}\Delta_\chi}f| \cdot \e^{-2c_32^{-j}\Delta_\chi} |\e^{-2^{-j-1}\Delta_\chi}g|  \lesssim  \e^{-3c_32^{-j}\Delta_\chi} |\e^{-t_j\Delta_\chi}f|\cdot \e^{-3c_32^{-j}\Delta_\chi} | \e^{-t_j\Delta_\chi}g|.
\]
Choose $t_j(x) \in [2^{-j-3}, 2^{-j-2}]$ such that
\begin{align*}
&\left( 2^{-j\ell } \left( \e^{-3c_3 2^{-j}\Delta_\chi} |\e^{-t_j\Delta_\chi}f| \cdot \e^{-3c_32^{-j}\Delta_\chi} |\e^{-t_j\Delta_\chi}g|\right) \right)^q \\
& \qquad = \int_{2^{-j-3}}^{2^{-j-2}} \left( 2^{-j\ell } \left( \e^{-3c_32^{-j}\Delta_\chi} |\e^{-t \Delta_\chi}f| \cdot \e^{-3c_32^{-j}\Delta_\chi} |\e^{-t\Delta_\chi}g|\right) \right)^q \, \frac{\dd t}{t}.
\end{align*}
Then,
\begin{align*}
& \left\| \left(\sum_{j=0}^\infty \left( 2^{-j\ell } \left( \e^{-2c_32^{-j}\Delta_\chi} |\e^{-2^{-j-1}\Delta_\chi}f| \cdot \e^{-2c_32^{-j}\Delta_\chi} |\e^{-2^{-j-1}\Delta_\chi}g|\right) \right)^q \right)^{1/q}\right\|_{L^p}\\
%&= \left\| \left(\sum_{j=0}^\infty \left( 2^{-j\ell } \left( \e^{-c2^{-j}\Delta_\chi} |\e^{-(2^{-j-1}-t_j)\Delta_\chi} \e^{-t_j\Delta_\chi}f|  \cdot \e^{-c2^{-j}\Delta_\chi} |\e^{-(2^{-j-1}-t_j)\Delta_\chi} \e^{-t_j\Delta_\chi}g|\right) \right)^q \right)^{1/q}\right\|_{L^p}\\
&\qquad  \lesssim \left\| \left(\sum_{j=0}^\infty \left( 2^{-j\ell } \left( \e^{-3c_32^{-j}\Delta_\chi} |\e^{-t_j\Delta_\chi}f| \cdot \e^{-3c_32^{-j}\Delta_\chi} | \e^{-t_j\Delta_\chi}g|\right) \right)^q \right)^{1/q}\right\|_{L^p}\\
& \qquad \lesssim \left\| \left(\sum_{j=0}^\infty  \int_{2^{-j-3}}^{2^{-j-2}} \left( 2^{-j\ell } \left( \e^{-3c_32^{-j}\Delta_\chi} |\e^{-t \Delta_\chi}f| \cdot \e^{-3c_32^{-j}\Delta_\chi} |\e^{-t\Delta_\chi}g|\right) \right)^q \, \frac{\dd t}{t} \right)^{1/q}\right\|_{L^p}\\
& \qquad \lesssim \left\| \left( \int_{0}^{1} \left( t^\ell \left( \e^{-24 c_3^2t \Delta_\chi} |\e^{-t \Delta_\chi}f| \cdot \e^{-24 c_3^2t \Delta_\chi} |\e^{-t\Delta_\chi}g|\right) \right)^q \, \frac{\dd t}{t} \right)^{1/q}\right\|_{L^p},
\end{align*}
which completes the proof.
\end{proof}
As all the discussion above shows, pointwise results concerning the heat semigroup $\e^{-t\Delta_\chi}$ are substantially harder to obtain than $L^p$-norm estimates. This is the reason why, in several of the results presented from now on, we shall give detailed proofs only for those involving Triebel--Lizorkin spaces. In all such cases, the analogous result for Besov spaces can be obtained by means of a similar procedure, but in a somewhat easier fashion.

\section{Equivalent norms}\label{sec:EN}
We begin by stating a fundamental decomposition formula (see~\cite[Lemma 3.1]{Feneuil}) which will be a key ingredient from now on, and which can be thought of, in some sense, as a substitute of the Littlewood--Paley decomposition of a function. If $m\in \N, m\geq 1$ and $f\in \mathcal{S}'(G)$,  then
\begin{equation}\label{LPD}
f= \frac{1}{(m-1)!} \int_0^{1} W_t^{(m)}f \, \frac{\dd t}{t} +  \sum_{k=0}^{m-1} \frac{1}{k!} W_1^{(k)}f,
\end{equation}
where the integral converges in $\mathcal{S}'(G)$. If $f\in \mathcal{S}(G)$, the integral converges in $\mathcal{S}(G)$.

In this section, we prove equivalent characterizations of Besov and Triebel--Lizorkin norms. Observe in particular that for $q\in [1,\infty]$ and $\alpha>0$, they imply the embeddings 
\begin{equation}\label{embedLp}
B_\alpha^{p,q}(\mu_\chi) \hookrightarrow L^p(\mu_\chi), \qquad F_\alpha^{p,q}(\mu_\chi) \hookrightarrow L^p(\mu_\chi)
\end{equation}
for $p\in [1,\infty]$ and $p\in (1,\infty)$ respectively.

\subsection{Independence of parameters}
\begin{theorem} \label{teo-equiv1}
Let $\alpha> 0$, $m>\alpha/2$ be an integer, $t_0\in [0,1)$ and $q\in [1,\infty]$.
\begin{itemize}
\item[(i)] If $p \in [1,\infty]$, then the norm $\|f\|_{B_\alpha^{p,q}(\mu_\chi)} $ is equivalent to the norm
\begin{equation}\label{Besov2}
\left( \int_0^1 \left( t^{-\frac{\alpha}{2}} \| W_t^{(m)} f\|_{L^p(\mu_\chi)}\right)^q \, \frac{\dd t}{t}\right)^{1/q} + \|\e^{-t_0\Delta_\chi}f\|_{L^p(\mu_\chi)},
\end{equation}
with the usual modification when $q=\infty$.
\item[(ii)] If $p\in (1,\infty)$, then the norm $\|f\|_{F_\alpha^{p,q}(\mu_\chi)} $ is equivalent to the norm
\begin{equation}\label{TL1}
\Bigg\| \left( \int_0^1 \left( t^{-\frac{\alpha}{2}}| W_t^{(m)} f|\right)^q \, \frac{\dd t}{t}\right)^{1/q}\Bigg\|_{L^p(\mu_\chi)} + \|\e^{-t_0\Delta_\chi}f\|_{L^p(\mu_\chi)},
\end{equation}
with the usual modification when $q=\infty$.
\end{itemize}
If $\alpha=0$, the norms $\|f\|_{B_\alpha^{p,q}(\mu_\chi)} $ and $\|f\|_{F_\alpha^{p,q}(\mu_\chi)} $ are equivalent to those in~\eqref{Besov2} and~\eqref{TL1}, respectively,  provided $t_0\in (0,1)$.
\end{theorem}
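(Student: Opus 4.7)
My plan is to prove Theorem~\ref{teo-equiv1} in two separate stages. First I would establish that the norm is independent of the parameter $t_0$ with $m$ fixed, and second that it is independent of the integer $m>\alpha/2$ with $t_0$ fixed; chaining the two equivalences then delivers the theorem. Throughout, write $\mathscr{B}^{p,q}_{\alpha,m}(f)$ and $\mathscr{F}^{p,q}_{\alpha,m}(f)$ for the integral parts of the candidate norms~\eqref{Besov2} and~\eqref{TL1} with exponent $m$ in place of $[\alpha/2]+1$. The main tools will be the decomposition formula~\eqref{LPD}, the semigroup and analyticity estimates of Lemma~\ref{pointwiseestheat2}, and---for the Triebel--Lizorkin case---Propositions~\ref{contCRTN} and~\ref{CRTNintegrale}.

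For the independence of $t_0$, fix $t_0<t_0'$ in the admissible range. Contractivity of the diffusion semigroup on $L^p(\mu_\chi)$ gives the easy direction $\|\e^{-t_0'\Delta_\chi}f\|_{L^p}\le\|\e^{-t_0\Delta_\chi}f\|_{L^p}$. For the reverse direction I would apply $\e^{-t_0\Delta_\chi}$ to both sides of~\eqref{LPD} and estimate
\[
\|\e^{-t_0\Delta_\chi}f\|_{L^p}\lesssim\int_0^1\|W_t^{(m)}f\|_{L^p}\,\frac{\dd t}{t}+\sum_{k=0}^{m-1}\|\e^{-t_0\Delta_\chi}W_1^{(k)}f\|_{L^p}.
\]
The integral is controlled by $\mathscr{B}^{p,q}_{\alpha,m}(f)$ via Hölder's inequality in $t$, a step that requires $\alpha>0$ when $t_0=0$ and explains why only $t_0\in(0,1)$ is admissible when $\alpha=0$. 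The finite sum is handled by writing $\e^{-t_0\Delta_\chi}W_1^{(k)}f=\Delta_\chi^k\e^{-(1+t_0-t_0')\Delta_\chi}\e^{-t_0'\Delta_\chi}f$ and invoking the analyticity bound $\|\Delta_\chi^k\e^{-s\Delta_\chi}\|_{L^p\to L^p}\lesssim s^{-k}$, which follows from Lemma~\ref{pointwiseestheat2}(iii) with $h=0$, $J=\emptyset$, combined with the $L^p$-contractivity of the semigroup. The Triebel--Lizorkin version follows the same scheme, with Propositions~\ref{contCRTN} and~\ref{CRTNintegrale} invoked to commute the extra semigroup factors past the $L^p(L^q)$ mixed norm.

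For the independence of $m$, I would exploit the differential identity
\[
\frac{\dd}{\dd t}\bigl[t^{-m}W_t^{(m)}f\bigr]=\frac{\dd}{\dd t}\bigl[\Delta_\chi^m\e^{-t\Delta_\chi}f\bigr]=-t^{-m-1}W_t^{(m+1)}f,
\]
whose integration from $t$ to $1$ yields
\[
W_t^{(m)}f=t^mW_1^{(m)}f+t^m\int_t^1u^{-m-1}W_u^{(m+1)}f\,\dd u.
\]
A weighted Hardy inequality applied pointwise in $x$---valid precisely because $m-\alpha/2>0$---together with the bound $\|W_1^{(m)}f\|_{L^p}\lesssim\|\e^{-t_0\Delta_\chi}f\|_{L^p}$ from the previous paragraph, then shows $\mathscr{B}^{p,q}_{\alpha,m}(f)\lesssim\|\e^{-t_0\Delta_\chi}f\|_{L^p}+\mathscr{B}^{p,q}_{\alpha,m+1}(f)$, and the analogous bound for $\mathscr{F}$. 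For the reverse direction I would factor $W_t^{(m+1)}f=(t\Delta_\chi)\e^{-\frac{t}{2}\Delta_\chi}[2^mW_{t/2}^{(m)}f]$ and apply Lemma~\ref{pointwiseestheat2}(iii) with $k=1$, $h=0$, $J=\emptyset$ to obtain the pointwise bound $|W_t^{(m+1)}f|\lesssim\e^{-at\Delta_\chi}|W_{t/2}^{(m)}f|$; a dyadic decomposition in $t$, Lemma~\ref{pointwiseestheat2}(i) and Proposition~\ref{contCRTN} then deliver $\mathscr{F}^{p,q}_{\alpha,m+1}(f)\lesssim\mathscr{F}^{p,q}_{\alpha,m}(f)$ after re-assembling the dyadic sums into an integral via the mean value theorem.

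The main obstacle I expect is the Triebel--Lizorkin case, where the outer $L^p$-norm prevents the scalar time-variable identities from being applied after the norm is taken. The remedy is to perform all the identities and the Hardy inequality pointwise in $x$, and then to use Propositions~\ref{contCRTN} and~\ref{CRTNintegrale} to commute the heat-semigroup factors that appear past the $L^p(L^q)$ mixed norm. Concretely, the dyadic argument from the proof of Proposition~\ref{CRTNintegrale} generalizes to our setting once one notices that the commutation $W_t^{(m)}f=(t/t_0)^m\e^{-(t-t_0)\Delta_\chi}W_{t_0}^{(m)}f$ (for $t\ge t_0$) is the appropriate substitute for the semigroup relation $\e^{-t\Delta_\chi}g=\e^{-(t-t_0)\Delta_\chi}\e^{-t_0\Delta_\chi}g$ used there. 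The Besov case is considerably simpler: Minkowski lets one take the $L^p$-norm first and reduces everything to scalar estimates in $t$.
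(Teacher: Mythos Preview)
Your proposal is correct and follows essentially the same two-stage architecture as the paper's proof: first prove independence of $t_0$ via the decomposition formula~\eqref{LPD} and analyticity of the semigroup, then prove independence of $m$ by showing the two one-step inequalities between $m$ and $m+1$.

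The one noteworthy difference is in the direction $\mathscr{F}^{p,q}_{\alpha,m}\lesssim\mathscr{F}^{p,q}_{\alpha,m+1}+\|\e^{-t_0\Delta_\chi}f\|_{L^p}$. The paper applies~\eqref{LPD} (with order $1$) to $W_t^{(m)}f$, obtaining an integral $\int_0^1 t^{-1}W_t^{(m+1)}\e^{-s\Delta_\chi}f\,\dd s$ that must then be split into the regions $s\in[0,t]$ and $s\in[t,1]$; the $[0,t]$ piece is handled with Lemma~\ref{pointwiseestheat2} and Proposition~\ref{CRTNintegrale}, the $[t,1]$ piece by Schur's lemma. Your integrated differential identity
\[
W_t^{(m)}f=t^mW_1^{(m)}f+t^m\int_t^1u^{-m-1}W_u^{(m+1)}f\,\dd u
\]
is cleaner: it produces directly the Hardy-type integral with kernel $(t/u)^{m-\alpha/2}\mathbf 1_{\{u\ge t\}}$, so a single pointwise application of the weighted Hardy inequality suffices, and no heat-semigroup commutation (hence no call to Proposition~\ref{CRTNintegrale}) is needed for this step in the Triebel--Lizorkin case. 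This is a genuine, if modest, simplification. The other steps---the reverse inequality via the factorisation $W_t^{(m+1)}=2^m(t\Delta_\chi)\e^{-t/2\,\Delta_\chi}W_{t/2}^{(m)}$ and the $t_0$-independence via~\eqref{LPD}---coincide with the paper's argument.

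One point deserves a word of clarification: for the case $\alpha=0$, $t_0\in(0,1)$, your sketch does not quite say how the integral $\int_0^1\|W_t^{(m)}f\|_{L^p}\,\frac{\dd t}{t}$ is controlled (H\"older alone fails when $\alpha=0$). The paper's remedy is to apply~\eqref{LPD} at order $m+1$ instead and use $|W_t^{(m+1)}\e^{-t_0\Delta_\chi}f|\lesssim t\,t_0^{-1}\e^{-a t_0\Delta_\chi}|W_t^{(m)}f|$ from Lemma~\ref{pointwiseestheat2}, which supplies the missing power of $t$; you should make this explicit.
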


\begin{proof}
We prove only (ii) when $q<\infty$, since the proofs of the remaining cases follow the same steps, and are easier in some respects. We split the proof of (ii) into three steps.

\smallskip

\textit{Step 1}. Let $m > \alpha/2$ be an integer. We prove that if either $t_0\in [0,1)$ and $\alpha>0$, or $t_0\in (0,1)$ and $\alpha=0$, then
\begin{equation}\label{t012}
\|\e^{-t_0 \Delta_\chi}f\|_{L^p} \lesssim  \left\| \left( \int_0^1 \left( t^{-\frac{\alpha}{2}} |W_t^{(m)} f|\right)^q \, \frac{\dd t}{t}\right)^{1/q}\right\|_{L^p} +\|\e^{-\frac{1}{2}\Delta_\chi} f\|_{L^p}
\end{equation}
and
 \begin{equation}\label{12t0}
 \|\e^{-\frac{1}{2}\Delta_\chi}f\|_{L^p} \lesssim \left\| \left( \int_0^1 \left( t^{-\frac{\alpha}{2}} | W_t^{(m)} f|\right)^q \, \frac{\dd t}{t}\right)^{1/q} \right\|_{L^p} +\|\e^{-t_0\Delta_\chi} f\|_{L^p}.
 \end{equation}

Let $\alpha>0$ and $t_0\in [0,1)$. By Lemma~\ref{pointwiseestheat3}, $\|W_1^{(k)} \e^{-t_0 \Delta_\chi} f\|_{L^p} \lesssim \|\e^{-\frac{1}{2}\Delta_\chi}f\|_{L^p}$ for every $k\in \N$. Moreover, $| W_t^{(m)} \e^{-t_0 \Delta_\chi} f| \leq \e^{-t_0 \Delta_\chi} | W_t^{(m)}f|$. We use formula~\eqref{LPD}, these observations, the boundedness of $\e^{-t_0 \Delta_\chi}$ on $L^p(\mu_\chi)$ and H\"older's inequality to obtain
\begin{align*}
\|\e^{-t_0 \Delta_\chi}f\|_{L^p} &\lesssim \left\| \int_0^{1} |W_t^{(m)} \e^{-t_0 \Delta_\chi} f| \, \frac{\dd t}{t} \right\|_{L^p}+  \sum_{k=0}^{m-1} \|W_1^{(k)}\e^{-t_0\Delta_\chi} f\|_{L^p}\\
& \lesssim \left\| \left(\int_0^{1}\left(t^{-\frac{\alpha}{2}} |W_t^{(m)} f|\right)^q \, \frac{\dd t}{t}\right)^{1/q}\right\|_{L^p} +\|\e^{-\frac{1}{2}\Delta_\chi}f\|_{L^p}.
\end{align*}
Let $\alpha = 0$ and $t_0\in (0,1)$. By formula~\eqref{LPD}, Lemma~\ref{pointwiseestheat2}, and arguing as above, we get
\begin{align*}
  \|\e^{-t_0\Delta_\chi} f\|_{L^p} & \lesssim \left\| \int_0^{1} |W_t^{(m+1)} \e^{-t_0 \Delta_\chi} f| \, \frac{\dd t}{t}\right\|_{L^p} + \sum_{k=0}^{m} \|W_1^{(k)} \e^{-t_0 \Delta_\chi} f\|_{L^p} \\
& \lesssim \left\| \e^{- a_2 t_0\Delta_\chi} \int_0^{1} t | W_t^{(m)} f| \, \frac{\dd t}{t}\right\|_{L^p} + \|\e^{-\frac{1}{2}\Delta_\chi}f\|_{L^p} \\
%& \lesssim \left\| \int_0^{1}  t | W_t^{(m)} f| \, \frac{\dd t}{t}\right\|_{L^p} + \|\e^{-\frac{1}{2}\Delta_\chi} f\|_{L^p} \\
& \lesssim \left\| \left(\int_0^{1} | W_t^{(m)} f|^q \, \frac{\dd t}{t}\right)^{1/q} \right\|_{L^p}+ \|\e^{-\frac{1}{2}\Delta_\chi}f\|_{L^p}.
\end{align*}
This concludes the proof ~\eqref{t012}. The proof of~\eqref{12t0} is analogous and omitted.

\smallskip

\textit{Step 2.} We prove that for all integers $m > \alpha/2$ and $t_0\in (0,1)$,
\[
\Bigg\| \left( \int_0^1 \left( t^{-\frac{\alpha}{2}} | W_t^{(m)} f|\right)^q \, \frac{\dd t}{t}\right)^{1/q} \Bigg\|_{L^p}
\lesssim \Bigg\| \left( \int_0^1 \left( t^{-\frac{\alpha}{2}} | W_t^{(m+1)} f|\right)^q \, \frac{\dd t}{t}\right)^{1/q}\Bigg\|_{L^p} + \|\e^{-t_0\Delta_\chi}f\|_{L^p} .
\]
By~\eqref{LPD} applied to $W_t^{(m)} f$ we get
\begin{align*}
W_t^{(m)} f
& = \int_0^{1} t^{-1} W_t^{(m+1)} \e^{-s\Delta_\chi} f \, \dd s+  W_t^{(m)} \e^{-\Delta_\chi}  f.
\end{align*}
Thus, 
\begin{align*}
 \left\| \left( \int_0^1 \left( t^{-\frac{\alpha}{2}}  |  W_t^{(m)} f|\right)^q \, \frac{\dd t}{t}\right)^{1/q} \right\|_{L^p}
& \leq \Bigg\| \left( \int_0^1 \left( t^{-\frac{\alpha}{2}-1} \int_0^1 | W_t^{(m+1)} \e^{-s\Delta_\chi} f| \, \dd s  \right)^q \, \frac{\dd t}{t}\right)^{1/q}\Bigg\|_{L^p} \\ 
& \quad +\Bigg\|  \left( \int_0^1 \left( t^{-\frac{\alpha}{2}} | W_t^{(m)} \e^{-\Delta_\chi} f|\right)^q \, \frac{\dd t}{t}\right)^{1/q}\Bigg\|_{L^p} \\
&\eqqcolon  I_1 + I_2.
\end{align*}
Now, by Lemma~\ref{pointwiseestheat2} and Proposition~\ref{CRTNintegrale} we obtain that there exists $c>0$ such that
\begin{align}\label{intI2}
I_2 &\lesssim\Bigg\| \left( \int_0^1 \left( t^{m-\frac{\alpha}{2}} \e^{-c \Delta_\chi} | \e^{-t_0\Delta_\chi }f|\right)^q \, \frac{\dd t}{t}\right)^{1/q}\Bigg\|_{L^p} \lesssim \|  \e^{-t_0\Delta_\chi }f \|_{L^p}.
\end{align}
We now consider $I_1$. We split the inner integral according to the splitting $[0,t]\cup [t,1]$.  By Lemma~\ref{pointwiseestheat2} (observe that $ \frac{t}{2}\leq s+\frac{t}{2} \leq 3\frac{t}{2}$ if $s\in [0,t]$)
\begin{align*}
 \int_0^1 \left( t^{-\frac{\alpha}{2}-1} \int_0^t | W_t^{(m+1)} \e^{-s\Delta_\chi} f| \, \dd s  \right)^q \, \frac{\dd t}{t} 
 & \lesssim \int_0^1 \left( t^{-\frac{\alpha}{2}-1} \int_0^t |\e^{-(s+\frac{t}{2})\Delta_\chi}  W_{t/2}^{(m+1)} f|\, \dd s \right)^q \, \frac{\dd t}{t}\\
&\lesssim \int_0^1 \left( t^{-\frac{\alpha}{2}} \e^{-\frac{3}{2}c_3 t\Delta_\chi} |W_{t/2}^{(m+1)}  f| \right)^q \, \frac{\dd t}{t}
\end{align*}
so that by Proposition~\ref{CRTNintegrale} and a change of variables
\begin{equation}\label{int0t}
\Bigg\| \left( \int_0^1 \left( t^{-\frac{\alpha}{2}-1} \int_0^t | W_t^{(m+1)} \e^{-s\Delta_\chi} f| \, \dd s  \right)^q \, \frac{\dd t}{t}\right)^{1/q} \Bigg\|_{L^p} \lesssim\Bigg\| \left( \int_0^1 \left( t^{-\frac{\alpha}{2}} | W_t^{(m+1)} f|\right)^q \, \frac{\dd t}{t}\right)^{1/q}\Bigg\|_{L^p} .
\end{equation}
Moreover, observe that
\begin{align*}
 \int_0^1 \left( t^{-\frac{\alpha}{2}-1} \int_t^1 | W_t^{(m+1)} \e^{-s\Delta_\chi} f| \, \dd s  \right)^q \, \frac{\dd t}{t} 
 & \leq \int_0^1 \left( t^{m-\frac{\alpha}{2}} \e^{-t\Delta_\chi} \int_t^1 s^{-(m+1)} |W_{s}^{(m+1)} f|\, \dd s \right)^q \, \frac{\dd t}{t}\\
%&\lesssim  \int_0^1 \left( t^{m+1-\frac{\alpha}{2}} \int_t^1 s^{-(m+1)} |W_{s}^{(m+1)} f|\, \dd s \right)^q \, \frac{\dd t}{t}\\
\end{align*}
so that
\begin{align*}
&\Bigg\| \left( \int_0^1 \left( t^{-\frac{\alpha}{2}-1} \int_t^1 | W_t^{(m+1)} \e^{-s\Delta_\chi} f| \, \dd s  \right)^q \, \frac{\dd t}{t}\right)^{1/q} \Bigg\|_{L^p} \\ 
& \qquad \lesssim \Bigg \| \left( \int_0^1 \left( t^{m-\frac{\alpha}{2}} \int_t^1 s^{-(m+1)} |W_{s}^{(m+1)} f|\, \dd s \right)^q \, \frac{\dd t}{t} \right)^{1/q}\Bigg\|_{L^p} 
\end{align*}
 by Proposition~\ref{CRTNintegrale}. To conclude, observe that 
\[
\int_0^1 \left( t^{m-\frac{\alpha}{2}} \int_t^1 s^{-(m+1)} |W_{s}^{(m+1)} f|\, \dd s \right)^q \, \frac{\dd t}{t}  = \int_0^1 \left( \int_0^1  K(s,t)g(s)\, \frac{\dd s}{s} \right)^q \, \frac{\dd t}{t}
\]
where $g(s) = s^{-\frac{\alpha}{2}}|W_s^{(m+1)} f |$ and $K(s,t) = \big(\frac{t}{s}\big)^{m-\frac{\alpha}{2}}   \mathbf{1}_{\{s\geq t\}}$.
Since
\[
\sup_{t\in (0,1)} \int_0 ^1K(s,t)\, \frac{\dd s }{s} \lesssim 1 \qquad \text{ and } \qquad \sup_{s\in (0,1)}\int_0 ^1K(s,t)\, \frac{\dd t}{t} \lesssim 1,
\]
Schur's Lemma (see~\cite[Theorem 6.18]{FollandRA}) yields
\[
\Bigg\| \left( \int_0^1 \left( t^{m-\frac{\alpha}{2}} \int_t^1 s^{-(m+1)} |W_{s}^{(m+1)} f|\, \dd s \right)^q \, \frac{\dd t}{t} \right)^{1/q}\Bigg\|_{L^p} \lesssim \Bigg\| \left( \int_0^1 g(t)^q \, \frac{\dd t }{t}\right)^{1/q} \Bigg\|_{L^p},
\]
which together with~\eqref{intI2} and~\eqref{int0t} concludes the proof of Step 2.

\smallskip

\textit{Step 3.}  We prove that for every integer $m> \alpha/2$
\[
\Bigg\|\left( \int_0^1 \left( t^{-\frac{\alpha}{2}} | W^{(m+1)}_t  f|\right)^q \, \frac{\dd t}{t}\right)^{1/q}\Bigg\|_{L^p} \lesssim\Bigg\| \left( \int_0^1 \left( t^{-\frac{\alpha}{2}} | W^{(m)}_t f|_{L^p}\right)^q \, \frac{\dd t}{t}\right)^{1/q}\Bigg\|_{L^p}.
\]
By Lemma~\ref{pointwiseestheat2} and Proposition~\ref{CRTNintegrale}, there exists $c>0$ such that
\begin{align*}
  \left\| \left(\int_0^1 \left( t^{-\frac{\alpha}{2}} |W^{(m+1)}_t f|\right)^q \, \frac{\dd t}{t} \right)^{1/q}\right\|_{L^p}
& \lesssim \left\| \left( \int_0^1 \left( t^{-\frac{\alpha}{2}} \e^{-ct \Delta_\chi} |W^{(m)}_{t/2} f|\right)^q \, \frac{\dd t}{t}\right)^{1/q}\right\|_{L^p}\\
& \lesssim \left\| \left( \int_0^1 \left( t^{-\frac{\alpha}{2}} | W^{(m)}_{t/2} f|\right)^q \, \frac{\dd t}{t}\right)^{1/q}\right\|_{L^p},
\end{align*}
which concludes the proof of Step 3 and of the equivalence of~\eqref{TL0} and~\eqref{TL1}.
\end{proof}

\subsection{Littlewood--Paley type characterization}
The next result concerns a characterization that resembles the definition of Besov and Triebel--Lizorkin norms in the classical case, which makes use of the Littlewood--Paley decomposition of a function. As already mentioned, for $j\in\N$ the operators $W^{(m)}_{2^{-j}}$ play the role of the operators $\bigtriangleup_j$ in the classical Littlewood--Paley decomposition, while $\e^{-t_0\Delta_\chi}$ plays the role of $S_0$.
\begin{theorem} \label{teo-equiv2}
Let $\alpha>  0$, $m>\alpha/2$ be an integer, $t_0\in [0,1)$ and $q\in [1,\infty]$.
\begin{itemize}
\item[(i)] If $p\in [1,\infty]$, then the norm $\|f\|_{B_\alpha^{p,q}(\mu_\chi)} $ is equivalent to the  norm
\begin{equation}\label{Besov3}
\left(\sum_{j=0}^\infty \left( 2^{j \frac{\alpha}{2}} \|W^{(m)}_{2^{-j}} f\|_{L^p(\mu_\chi)} \right)^q \right)^{1/q} + \|\e^{-t_0\Delta_\chi}f\|_{L^p(\mu_\chi)},
\end{equation}
with the usual modification when $q=\infty$.
\item[(ii)]  If $p\in (1, \infty)$, then the norm $\|f\|_{F_\alpha^{p,q}(\mu_\chi)} $ is equivalent to the norm
\begin{equation}\label{TL2}
\Bigg\| \left(\sum_{j=0}^\infty \left( 2^{j \frac{\alpha}{2}} |W^{(m)}_{2^{-j}} f|\right)^q \right)^{1/q} \Bigg\|_{L^p(\mu_\chi)} + \| \e^{-t_0\Delta_\chi}f\|_{L^p(\mu_\chi)},
\end{equation}
with the usual modification when $q=\infty$.
\end{itemize}
If $\alpha=0$, the norms $\|f\|_{B_\alpha^{p,q}(\mu_\chi)} $ and $\|f\|_{F_\alpha^{p,q}(\mu_\chi)} $ are equivalent respectively to those in~\eqref{Besov3} and~\eqref{TL2} provided $t_0\in (0,1)$.
\end{theorem}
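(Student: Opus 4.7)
The plan is to use Theorem~\ref{teo-equiv1} to work throughout with one integer $m > \alpha/2$, thereby reducing Theorem~\ref{teo-equiv2} to the comparison of the main terms
\[
A \coloneqq \left\| \left(\int_0^1 (t^{-\alpha/2}|W_t^{(m)} f|)^q\, \tfrac{\dd t}{t}\right)^{1/q}\right\|_{L^p(\mu_\chi)}
\qquad \text{and}\qquad
B \coloneqq \left\|\left(\sum_{j=0}^\infty (2^{j\alpha/2}|W_{2^{-j}}^{(m)} f|)^q\right)^{1/q}\right\|_{L^p(\mu_\chi)}
\]
in case~(ii), together with the analogous sequence-valued $L^p$ quantities in case~(i). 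The base term $\|\e^{-t_0\Delta_\chi}f\|_{L^p(\mu_\chi)}$ is equivalent across different $t_0$ by a verbatim adaptation of Step~1 of the proof of Theorem~\ref{teo-equiv1}. I will focus on case~(ii); case~(i) follows from the same outline, using the $L^p$-contractivity of $(\e^{-t\Delta_\chi})_{t>0}$ in place of the vector-valued inequality Proposition~\ref{contCRTN}, while for $q = \infty$ all $L^q(\dd s/s)$- and $\ell^q$-means are replaced by suprema throughout.

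For $A \lesssim B$, I will split $\int_0^1 = \sum_{j\ge 0}\int_{2^{-j-1}}^{2^{-j}}$ and exploit the semigroup identity
\[
W_t^{(m)} = \left(\tfrac{t}{2^{-j-2}}\right)^m \e^{-(t-2^{-j-2})\Delta_\chi}\, W_{2^{-j-2}}^{(m)},
\]
valid because $t \ge 2^{-j-2}$. Since $t-2^{-j-2}\in[2^{-j-2}, 3\cdot 2^{-j-2}]$ for $t\in[2^{-j-1}, 2^{-j}]$, Lemma~\ref{pointwiseestheat2}(i) yields the pointwise bound $|W_t^{(m)} f|\lesssim \e^{-c 2^{-j}\Delta_\chi}|W_{2^{-j-2}}^{(m)} f|$ uniformly in $t$. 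Integrating against $(t^{-\alpha/2})^q\, \dd t/t$ on $[2^{-j-1},2^{-j}]$ (where $t^{-\alpha/2}\approx 2^{j\alpha/2}$), summing in $j$, and applying Proposition~\ref{contCRTN} with the constant choice $t_j \equiv c 2^{-j}$ then gives $A \lesssim B$ after reindexing.

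The direction $B \lesssim A$ is where the main obstacle arises. The reversed identity $W_{2^{-j}}^{(m)} = (2^{-j}/s)^m \e^{-(2^{-j}-s)\Delta_\chi} W_s^{(m)}$ applied on the subinterval $I_j \coloneqq [2^{-j-1}, 3\cdot 2^{-j-2}]$ of $[2^{-j-1}, 2^{-j}]$ (on which $2^{-j} - s \in [2^{-j-2}, 2^{-j-1}]$ is comparable to $2^{-j}$) gives, via Lemma~\ref{pointwiseestheat2}(i), the pointwise bound $|W_{2^{-j}}^{(m)} f| \lesssim \e^{-c 2^{-j}\Delta_\chi}|W_s^{(m)} f|$ uniformly in $s \in I_j$. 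The difficulty is that the right-hand side depends on $s$, while to invoke Proposition~\ref{contCRTN} I need a single $f_j$ independent of $s$. I will overcome this by raising to the $q$-th power, averaging in $s \in I_j$ against $\dd s/s$, and applying Minkowski's integral inequality (valid since $q \ge 1$) to commute the $L^q(\dd s/s; I_j)$-mean past $\e^{-c 2^{-j}\Delta_\chi}$; this produces
\[
|W_{2^{-j}}^{(m)} f|\lesssim \e^{-c 2^{-j}\Delta_\chi} G_j, \qquad G_j(y)\coloneqq \left(\frac{1}{\log(3/2)}\int_{I_j}|W_s^{(m)} f(y)|^q\, \frac{\dd s}{s}\right)^{1/q}.
\]
Proposition~\ref{contCRTN} applied to the sequence $(G_j)$ with $t_j \equiv c 2^{-j}$ then gives $B \lesssim \|(\sum_j (2^{j\alpha/2} G_j)^q)^{1/q}\|_{L^p(\mu_\chi)}$; using $2^{j\alpha/2}\approx s^{-\alpha/2}$ on $I_j$ and the pairwise disjointness of the $I_j$'s in $(0,1)$, one verifies pointwise in $y$ that $\sum_j (2^{j\alpha/2} G_j(y))^q \lesssim \int_0^1 (s^{-\alpha/2}|W_s^{(m)} f(y)|)^q\, \dd s/s$, closing $B \lesssim A$.
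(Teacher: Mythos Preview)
Your proof is correct and shares the same architecture as the paper's: reduce via Theorem~\ref{teo-equiv1} to comparing the continuous and dyadic square functions, then exploit the semigroup identity together with Lemma~\ref{pointwiseestheat2} and Proposition~\ref{contCRTN}. Case~(i) and the direction $A\lesssim B$ in case~(ii) match the paper essentially verbatim (up to the choice of dyadic partition).

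The genuine difference is in $B\lesssim A$ for case~(ii). The paper invokes the mean value theorem: for each $x\in G$ and $j$ it selects $t_j(x)\in[2^{-j-2},2^{-j-1}]$ with
\[
\int_{2^{-j-2}}^{2^{-j-1}}\bigl(t^{-\alpha/2}|W_t^{(m)}f(x)|\bigr)^q\,\tfrac{\dd t}{t}\approx\bigl(t_j(x)^{-\alpha/2}|W_{t_j(x)}^{(m)}f(x)|\bigr)^q,
\]
writes $W_{2^{-j}}^{(m)}=\e^{-(2^{-j}-t_j)\Delta_\chi}W_{t_j}^{(m)}$ (up to harmless powers), and then applies Proposition~\ref{contCRTN} in its full strength, with the \emph{$x$-dependent} times $2^{-j}-t_j(x)$. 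Your route avoids this measurable selection entirely: from $|W_{2^{-j}}^{(m)}f|\lesssim \e^{-c2^{-j}\Delta_\chi}|W_s^{(m)}f|$ valid for \emph{every} $s\in I_j$, you average in $s$ and use Minkowski's integral inequality (positivity of the heat kernel, $q\ge1$) to pull the $L^q(\dd s/s)$-mean through the convolution, obtaining $|W_{2^{-j}}^{(m)}f|\lesssim \e^{-c2^{-j}\Delta_\chi}G_j$ with $G_j$ independent of $s$. What this buys you is that only the constant-time case $t_j\equiv c\,2^{-j}$ of Proposition~\ref{contCRTN} is needed---in effect just the original result of~\cite[Proposition~8]{CRTN}. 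The paper's approach, conversely, showcases (and presumably motivates) the variable-time extension it proves, and would still go through for exponents $q<1$ where Minkowski fails.
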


\begin{proof}
Again, we prove the statements only when $q<\infty$. To prove (i), just observe that
\begin{align*}
 \sum_{j=0}^\infty \left( 2^{j\frac{\alpha}{2}} \|W^{(m)}_{2^{-j}} f\|_{L^p} \right)^q
& \approx \sum_{j=0}^\infty \int_{2^{-j-1}}^{2^{-j}}  \left( 2^{j \frac{\alpha}{2}} \| \e^{-(2^{-j}-t)\Delta_\chi}W^{(m)}_t f\|_{L^p}  \right)^q \, \frac{\dd t}{t}  \\
& \lesssim \int_{0}^{1}  \left( t^{-\frac{\alpha}{2}} \|W^{(m)}_t  f\|_{L^p}  \right)^q \, \frac{\dd t}{t}  \\
&\lesssim \sum_{j=0}^\infty \left( 2^{j \frac{\alpha}{2}} \|W^{(m)}_{2^{-j}}  f\|_{L^p} \right)^q.
\end{align*}
Thus the norm~\eqref{Besov3} is equivalent to the norm~\eqref{Besov2}, and the conclusion follows by Theorem~\ref{teo-equiv1}.

To prove (ii), we shall prove that
\begin{equation}\label{upsilondiscreto}
\Bigg\| \left( \int_0^1 \left( t^{-\frac{\alpha}{2}}| W_t^{(m)} f|\right)^q \, \frac{\dd t}{t}\right)^{1/q}\Bigg\|_{L^p} \approx \Bigg\| \left(\sum_{j=0}^\infty \left( 2^{j\frac{\alpha}{2}} |W^{(m)}_{2^{-j}} f|\right)^q \right)^{1/q} \Bigg\|_{L^p},
\end{equation}
which yields the conclusion by Theorem~\ref{teo-equiv1}. Its proof is similar to that of Proposition~\ref{CRTNintegrale}.

To prove the inequality $\lesssim$, we observe that
\begin{align*}
 \int_0^1 \left( t^{-\frac{\alpha}{2}} |W_t^{(m)} f|\right)^q \, \frac{\dd t}{t} 
 & \approx \sum_{j=1}^\infty  \int_{2^{-j}}^{2^{-j+1}} \left( 2^{j \frac{\alpha}{2}} |\e^{-(t-2^{-j-1})\Delta_\chi} W^{(m)}_{2^{-j-1}}  f|\right)^q \, \frac{\dd t}{t}\\
& \lesssim \sum_{j=1}^\infty \left( 2^{j \frac{\alpha}{2}} \e^{-{4c_3}2^{-j}\Delta_\chi}|W^{(m)}_{2^{-j-1}}  f|\right)^q,
\end{align*}
since $2^{-j-1}\leq t-2^{-j-1} \leq 2^{j+2}$ for every $t\in [2^{-j}, 2^{-j+1}]$, so that Lemma~\ref{pointwiseestheat2} applies. Thus, by Proposition~\ref{contCRTN},
\begin{align}\label{TL2leq}
\Bigg\|\left( \int_0^1 \left( t^{-\frac{\alpha}{2}}| W^{(m)}_t  f|\right)^q \, \frac{\dd t}{t} \right)^{1/q}\Bigg\|_{L^p}
%&\lesssim\Bigg\| \left( \sum_{j=1}^\infty  \left( 2^{j \frac{\alpha}{2}} \e^{-4c_3 2^{-j}\Delta_\chi}| W^{(m)}_{2^{-j-1}}  f|\right)^q\right)^{1/q}\Bigg\|_{L^p} \nonumber \\
 &\lesssim  \Bigg\|\left( \sum_{j=0}^\infty  \left( 2^{j \frac{\alpha}{2}} | W^{(m)}_{2^{-j}}  f|\right)^q\right)^{1/q}\Bigg\|_{L^p}.
\end{align}
This proves the inequality $\lesssim$ of the statement.

We now prove the inequality $\gtrsim$. By the mean value theorem, there exists  $t_j(x)\in [2^{-j-2}, 2^{-j-1}]$ such that
\[
\int_{2^{-j-2}}^{2^{-j-1}} \left( t^{-\frac{\alpha}{2}}| W^{(m)}_t f(x)|\right) ^q \, \frac{\dd t}{t} \approx \left(t_j(x)^{-\frac{\alpha}{2}}| W^{(m)}_{t_j(x)} f(x)|\right)^q,
\]
where $W^{(m)}_{t_j(x)} f(x)$ is the function $x\mapsto t_j(x)^m [(\Delta_\chi^m f) * p_{t_j(x)}^\chi](x)$. Then, by Proposition~\ref{contCRTN}
\begin{align*}
\Bigg\| \left(\sum_{j=0}^\infty \left( 2^{j\frac{\alpha}{2}}|W^{(m)}_{2^{-j}} f|\right)^q \right)^{1/q} \Bigg\|_{L^p} 
 & = \Bigg\| \left(\sum_{j=0}^\infty \left( 2^{j \frac{\alpha}{2}} | \e^{-(2^{-j}- t_j)\Delta_\chi} W^{(m)}_{t_j} f|\right)^q \right)^{1/q} \Bigg\|_{L^p} \\
&  \lesssim\Bigg\| \left(\sum_{j=0}^\infty \left( 2^{j \frac{\alpha}{2}} | W^{(m)}_{t_j} f|\right)^q \right)^{1/q} \Bigg\|_{L^p} \\
&  \approx  \Bigg\| \left(\sum_{j=0}^\infty\int_{2^{-j-2}}^{2^{-j-1}} \left( t^{-\frac{\alpha}{2}} |W^{(m)}_t f|\right)^q \, \frac{\dd t}{t} \right)^{1/q} \Bigg\|_{L^p} \\
&  \lesssim \Bigg\| \left(\int_0^1 \left( t^{-\frac{\alpha}{2}} | W^{(m)}_t  f|\right)^q \, \frac{\dd t}{t} \right)^{1/q} \Bigg\|_{L^p},
\end{align*}
which completes the proof.
\end{proof}

To proceed further, we shall need the following lemma. It consists of two statements which are both corollaries of Schur's Lemma.
\begin{lemma} \label{lemmasomme}
Let $0<\gamma<\eta$ be two real numbers and $q\in [1,\infty]$. 
\begin{itemize}
\item[(i)] If $a,b\in \Z\cup\{\pm \infty\}$ are such that $a<b$, then for any sequence $(d_n)_{n\in \Z} \subset [0,\infty)$
\[
\sum_{j=a}^b \left( 2^{-j\gamma} \sum_{n=a}^b 2^{\min \{n,j\}\eta} d_n \right)^q \lesssim \sum_{n=a}^b \left(2^{(-\gamma+\eta)n} d_n\right)^q.
\]
\item[(ii)] For every function $d \colon (0,1) \to [0,\infty)$
\[
\int_{0}^1 \left( u^\gamma \int_0^1  \frac{1}{(t+u)^{\eta}} d(t)\, \frac{\dd t}{t} \right)^q \, \frac{\dd u}{u} \lesssim  \int_{0}^1 \left( t^{\gamma - \eta} d(t)\right)^q \, \frac{\dd t}{t}.
\]
\end{itemize}
The obvious modification applies when $q=\infty$.
\end{lemma}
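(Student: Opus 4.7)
Both inequalities are Schur bounds in disguise. The key step in each case is a weight-absorbing substitution that converts the stated inequality into a boundedness statement on $\ell^q$ (respectively $L^q((0,1),\dd t/t)$) for an integral/summation operator with a convolution-type kernel, at which point Schur's lemma applies directly (see~\cite[Theorem 6.18]{FollandRA}).

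\emph{Part (i).} Set $e_n = 2^{(\eta-\gamma)n} d_n$, so that $d_n = 2^{(\gamma-\eta)n} e_n$, and observe that
\[
2^{-j\gamma}\sum_{n=a}^{b} 2^{\min\{n,j\}\eta} d_n \;=\; \sum_{n=a}^{b} K(j,n)\, e_n,
\]
where
\[
K(j,n) \;=\; 2^{-j\gamma+\min\{n,j\}\eta+(\gamma-\eta)n}
\;=\;\begin{cases} 2^{-\gamma(j-n)} & \text{if } n\le j,\\[2pt] 2^{-(\eta-\gamma)(n-j)} & \text{if } n>j. \end{cases}
\]
Since $\gamma>0$ and $\eta-\gamma>0$, the kernel satisfies $K(j,n)\le 2^{-\sigma|n-j|}$ with $\sigma=\min(\gamma,\eta-\gamma)>0$, hence
\[
\sup_{j}\sum_{n\in\Z} K(j,n) + \sup_{n}\sum_{j\in\Z} K(j,n) <\infty.
\]
Schur's lemma then bounds the operator with kernel $K$ on $\ell^q(\Z)$, which is exactly the desired inequality for $q<\infty$; the case $q=\infty$ is the $\ell^\infty$ endpoint of the same Schur estimate.

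\emph{Part (ii).} Analogously, set $e(t) = t^{\gamma-\eta} d(t)$, so that $d(t) = t^{\eta-\gamma} e(t)$, and rewrite
\[
u^{\gamma}\int_0^1 \frac{1}{(t+u)^{\eta}}\, d(t)\, \frac{\dd t}{t} \;=\; \int_0^1 K(u,t)\, e(t)\, \frac{\dd t}{t}, \qquad K(u,t)=\frac{u^{\gamma}\, t^{\eta-\gamma}}{(t+u)^{\eta}}.
\]
The Schur integrals are evaluated by the scaling $t=us$ (resp.\ $u=tr$):
\[
\int_0^1 K(u,t)\,\frac{\dd t}{t} = \int_0^{1/u} \frac{s^{\eta-\gamma-1}}{(1+s)^{\eta}}\,\dd s \;\le\; \int_0^\infty \frac{s^{\eta-\gamma-1}}{(1+s)^{\eta}}\,\dd s,
\]
\[
\int_0^1 K(u,t)\,\frac{\dd u}{u} = \int_0^{1/t} \frac{r^{\gamma-1}}{(1+r)^{\eta}}\,\dd r \;\le\; \int_0^\infty \frac{r^{\gamma-1}}{(1+r)^{\eta}}\,\dd r,
\]
and both improper integrals converge thanks to $\gamma>0$ (integrability at $+\infty$) and $\eta-\gamma>0$ (integrability at $0$). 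Schur's lemma on $L^q((0,1),\dd t/t)$ then yields the claim for $q<\infty$, and the $q=\infty$ case follows from the uniform bound on $\int_0^1 K(u,t)\dd t/t$.

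\emph{Main obstacle.} There is no serious obstacle: once the correct reweighting is performed, the only substantive point is verifying the two Schur suprema, and in both cases the conditions $0<\gamma<\eta$ are precisely what is needed to guarantee convergence of the resulting integrals/series. The main thing to be careful about is keeping track of the measure $\dd t/t$ (so that the $L^q$-theory is dilation-invariant and the substitution $t=us$ leaves the Schur test scale-free), which is why the reweighting $e(t)=t^{\gamma-\eta}d(t)$ is the natural one.
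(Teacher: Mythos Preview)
Your proof is correct and follows the same approach as the paper: both parts are reduced to Schur's lemma after the reweighting $e_n=2^{(\eta-\gamma)n}d_n$ (resp.\ $e(t)=t^{\gamma-\eta}d(t)$), yielding exactly the kernel $K(t,u)=(u/t)^\gamma\,t^\eta/(t+u)^\eta$ that the paper uses for (ii). The only difference is that the paper cites \cite[Lemma~2.2]{Feneuil} for (i) and merely asserts the Schur bounds for (ii), whereas you carry out both computations explicitly.
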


\begin{proof} The statement (i) is~\cite[Lemma 2.2]{Feneuil}. To prove (ii), we rewrite the integral as
\[
\int_{0}^1 \left( u^\gamma \int_0^1  \frac{1}{(t+u)^{\eta}} d(t)\, \frac{\dd t}{t} \right)^q \, \frac{\dd u}{u} = \int_{0}^1 \left( \int_0^1 K(t,u) \tilde d(t) \, \frac{\dd t}{t} \right)^q \, \frac{\dd u}{u} 
\]
where $\tilde d(t) = t^{\gamma-\eta} d(t)$ and $K(t,u) =( \frac{u}{t})^\gamma  \frac{t^\eta}{(t+u)^{\eta}}$. Since
\[
\sup_{t\in (0,1)} \int_0^1 K(t,u)\, \frac{\dd u}{u} \lesssim 1, \qquad\qquad  \sup_{u\in (0,1)} \int_0^1 K(t,u)\,\frac{\dd t}{t} \lesssim 1,
\]
the statement follows by Schur's Lemma.
\end{proof}

\subsection{Characterizations in terms of vector fields}
The following result is a characterization of Besov and Triebel--Lizorkin norms in terms of the vector fields in $\mathbf{X}$. We write
\[
W^{(m), *}_{2^{-j}} f = \sup_{t\in [2^{-j},2^{-j+1}]}\max_{|J|\leq 2m} | t^m X_J \e^{-t\Delta_\chi} f|  \approx 2^{-jm} \sup_{t\in [2^{-j},2^{-j+1}]}\max_{|J|\leq 2m} |X_J \e^{-t\Delta_\chi} f| . 
\]
\begin{theorem}\label{teo-equiv3}
Let $\alpha > 0$, $m >\alpha/2$ be an integer and $q\in [1,\infty]$. 
\begin{itemize}
\item[(i)] If $p\in [1,\infty]$, then the norm $\|f\|_{B_\alpha^{p,q}(\mu_\chi)}$ is equivalent to the norm
\begin{equation}\label{Bmaxsup} 
\left(\sum_{j=0}^\infty \left(2^{j \frac{\alpha}{2}} \| W^{(m), *}_{2^{-j}} f \|_{L^p(\mu_\chi)} \right)^q \right)^{1/q} + \|f\|_{L^p(\mu_\chi)},
\end{equation}
with the usual modification when $q=\infty$.
\item[(ii)] If  $p\in (1,\infty)$, then the norm $\|f\|_{F_\alpha^{p,q}(\mu_\chi)}$ is equivalent to the norm
\begin{equation}\label{TLmaxsup} 
\Bigg\|\left(\sum_{j=0}^\infty \left(2^{j\frac{\alpha}{2}}W^{(m), *}_{2^{-j}} f  \right)^q \right)^{1/q}\Bigg\|_{L^p(\mu_\chi)} + \|f\|_{L^p(\mu_\chi)},
\end{equation}
with the usual modification when $q=\infty$.
\end{itemize}
\end{theorem}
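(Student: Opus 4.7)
The plan is to establish both inequalities. I describe the proof of (ii) (the Triebel--Lizorkin case) in detail; (i) follows the same scheme with the $L^p(\mu_\chi)$-norm taken inside the $\ell^q$-sum.

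\textbf{Direction $\gtrsim$.} Since $\Delta_\chi^m$ is a linear combination of operators $X_J$ with $|J|\le 2m$, one has $|W^{(m)}_{2^{-j}} f|\lesssim W^{(m),*}_{2^{-j}}f$ pointwise, and $\|e^{-\Delta_\chi/2}f\|_{L^p(\mu_\chi)}\le \|f\|_{L^p(\mu_\chi)}$ by contractivity. Combined with Theorem~\ref{teo-equiv2}(ii) for $t_0=1/2$, this yields $\|f\|_{F^{p,q}_\alpha(\mu_\chi)}\lesssim$~\eqref{TLmaxsup}.

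\textbf{Direction $\lesssim$.} Choose an integer $M>m$, apply the decomposition~\eqref{LPD} with parameter $M$ to $f$, and then apply $t^m X_J e^{-t\Delta_\chi}$ to both sides. The finite-sum remainder $\sum_{k=0}^{M-1} c_k t^m X_J \Delta_\chi^k e^{-(1+t)\Delta_\chi}f$ is, by Lemma~\ref{pointwiseestheat2}(ii) applied after splitting $e^{-(1+t)\Delta_\chi}=e^{-\Delta_\chi/2}e^{-(1/2+t)\Delta_\chi}$, pointwise dominated by $2^{-jm}\,e^{-c\Delta_\chi}|f|$, uniformly in $t\in[2^{-j},2^{-j+1}]$ and $|J|\le 2m$. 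For the integral part $c_M\int_0^1 t^m s^M X_J \Delta_\chi^M e^{-(t+s)\Delta_\chi}f\,\frac{ds}{s}$, I write $X_J\Delta_\chi^M e^{-(t+s)\Delta_\chi} = X_J e^{-(t+s)\Delta_\chi/2}\cdot \Delta_\chi^M e^{-(t+s)\Delta_\chi/2}$ and apply Lemma~\ref{pointwiseestheat2}(ii) to obtain
\[
\bigl|t^m s^M X_J\Delta_\chi^M e^{-(t+s)\Delta_\chi}f\bigr|\lesssim t^m s^M (t+s)^{-m-M}\,e^{-c(t+s)\Delta_\chi/2}\bigl|W^{(M)}_{(t+s)/2}f\bigr|,
\]
using $|J|/2\le m$ and $t+s<2$. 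Splitting $\int_0^1\frac{ds}{s}=\sum_n\int_{2^{-n-1}}^{2^{-n}}\frac{ds}{s}$, the key numerical inequality
\[
t^m s^M (t+s)^{-m-M}\lesssim 2^{-|n-j|m}\qquad\text{for }t\in[2^{-j},2^{-j+1}],\ s\in[2^{-n-1},2^{-n}]
\]
(valid since $M>m$) together with Lemma~\ref{pointwiseestheat2}(i) for the semigroup leads to
\[
W^{(m),*}_{2^{-j}}f(x)\lesssim \sum_{n\le j}2^{-(j-n)m}\,e^{-c\,2^{-n}\Delta_\chi}\tilde W^{(M)}_{2^{-n}}f(x)\;+\;2^{-jm}\,e^{-c\Delta_\chi}|f|(x),
\]
where $\tilde W^{(M)}_{2^{-n}}f(x):=\sup_{u\in[2^{-n-2},2^{-n+1}]}|W^{(M)}_uf|(x)$ and the $n>j$ contributions form a geometric series of rate $M$ absorbed into $n=j$.

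\textbf{Conclusion.} Multiplying by $2^{j\alpha/2}$ and taking the $\ell^q$-norm in $j$, Lemma~\ref{lemmasomme}(i) with $\gamma=m-\alpha/2>0$ and $\eta=m$ (so $-\gamma+\eta=\alpha/2$) converts $\sum_j$ into $\sum_n$, while $\sum_j 2^{jq(\alpha/2-m)}<\infty$ absorbs the error term. Taking the $L^p(\mu_\chi)$-norm, Proposition~\ref{contCRTN} strips each factor $e^{-c\,2^{-n}\Delta_\chi}$. Finally, to replace $\tilde W^{(M)}$ by $W^{(M)}$, I select a measurable $u_n^*(x)\in[2^{-n-2},2^{-n+1}]$ approximately attaining $\tilde W^{(M)}_{2^{-n}}f(x)$ and use the identity $W^{(M)}_{u_n^*}f=(u_n^*\cdot 2^{n+3})^M\,e^{-(u_n^*-2^{-n-3})\Delta_\chi}W^{(M)}_{2^{-n-3}}f$, in which $u_n^*-2^{-n-3}\in[\kappa'2^{-n},\kappa\,2^{-n}]$ for fixed $\kappa',\kappa$; a second application of Proposition~\ref{contCRTN} substitutes $|W^{(M)}_{2^{-n-3}}f|$ in place of $\tilde W^{(M)}_{2^{-n}}f$. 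After a harmless index shift, Theorem~\ref{teo-equiv2} (used with the parameter $M$) identifies the resulting expression with $\|f\|_{F^{p,q}_\alpha(\mu_\chi)}$, and the embedding~\eqref{embedLp} allows the replacement of $\|e^{-\Delta_\chi/2}f\|_{L^p}$ by $\|f\|_{L^p}$. The main obstacle is the supremum in $t$ built into $W^{(m),*}$, which forces the auxiliary quantity $\tilde W^{(M)}$; the measurable-selection argument combined with Proposition~\ref{contCRTN} is what eventually disposes of it. The Besov statement (i) is proved by the same outline with the $L^p$-norm placed before the $\ell^q$-sum, Lemma~\ref{lemmasomme}(i) applied to the scalar sequence $\|\tilde W^{(M)}_{2^{-n}}f\|_{L^p(\mu_\chi)}$, and the scalar comparison $\|\tilde W^{(M)}_{2^{-n}}f\|_{L^p}\lesssim\|W^{(M)}_{2^{-n}}f\|_{L^p}$ arising from the same measurable selection together with the $L^p$-boundedness of the heat semigroup.
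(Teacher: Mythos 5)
Your proof is correct and follows essentially the same route as the paper's: the easy direction is immediate since $\Delta_\chi^m$ is a combination of the $X_J$, and the hard direction uses the reproducing formula~\eqref{LPD}, the pointwise heat-kernel bounds of Lemma~\ref{pointwiseestheat2} to produce the off-diagonal kernel $2^{-|n-j|m}$, Lemma~\ref{lemmasomme}(i), and Proposition~\ref{contCRTN}. The only cosmetic deviations are that you take $M>m$ (where $M=m$ suffices, as in the paper) and that you retain a supremum in $u$ inside $\tilde W^{(M)}_{2^{-n}}$, disposed of by measurable selection plus a second use of Proposition~\ref{contCRTN}, whereas the paper instead keeps the $s$-integral $\int_{2^{-n-1}}^{2^{-n}}|W^{(m)}_sf|\,\frac{\dd s}{s}$ inside the auxiliary function $g_n$ and reduces to the fixed scale $W^{(m)}_{2^{-n-2}}f$ by a pointwise application of Lemma~\ref{pointwiseestheat2}, avoiding the selection argument.
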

\begin{proof}
We prove only (ii), for the proof of (i) is similar and easier in some respects. Since
\[ |\Delta_\chi^{m} \e^{-2^{-j}\Delta_\chi}f | \lesssim \sup_{t\in [2^{-j},2^{-j+1}]} \max_{|J| \leq 2m} | X_J \e^{-t\Delta_\chi}f|,
\]
the inequality $\|f\|_{F^{p,q}_{\alpha}(\mu_\chi)} \lesssim\eqref{TLmaxsup} $ is immediate.

We now prove the converse inequality. By~\eqref{LPD}, we may write
\begin{align*}
f &= \frac{1}{(m-1)!} \int_0^{1} W^{(m)}_s f \, \frac{\dd s}{s} + \sum_{h=0}^{m-1}\frac{1}{h!} W^{(h)}_1 f = \frac{1}{(m-1)!}\sum_{n=1}^\infty f_n +  \sum_{h=0}^{m-1}\frac{1}{h!} W^{(h)}_1  f,
\end{align*}
where
\[
 f_n = \int_{2^{-n}}^{2^{-n+1}} W^{(m)}_s  f \, \frac{\dd s}{s}.
\]
Since by Proposition \ref{contCRTN} there exists $c>0$ such that 
\[
|X_J \e^{-t\Delta_\chi} W^{(h)}_1 f| \lesssim \e^{-c\Delta_\chi} |f|
\]
for every $J$ such that $|J|\leq 2m$, $t\in [2^{-j}, 2^{-j+1}]$ and $h\in \{0,\dots, m-1\}$, when $q<\infty$ we obtain
\[
\Bigg\| \left(\sum_{j=0}^\infty \left(2^{j \frac{\alpha}{2}} |W^{(m), *}_{2^{-j}} W^{(h)}_1 f| \right)^q \right)^{1/q}\Bigg\|_{L^p}  \lesssim \| f\|_{L^p}.
\]
Observe now that
\begin{align*}
X_J \e^{-t \Delta_\chi} f_n  
&= X_J \e^{-(2^{-n-1} + t)\Delta_\chi} \int_{2^{-n}}^{3\, 2^{-n-1}} (s\Delta_\chi)^m \e^{-(s-2^{-n-1})\Delta_\chi} f \, \frac{\dd s}{s}  \\ & \hspace{1cm}+ X_J \e^{-(2^{-n} +t)\Delta_\chi} \int_{3\,2^{-n-1}}^{2^{-n+1}} (s\Delta_\chi)^m \e^{-(s-2^{-n})\Delta_\chi} f \, \frac{\dd s}{s} \\
 &= X_J \e^{-(2^{-n-1}+t)\Delta_\chi} \int_{2^{-n-1}}^{2^{-n}} (s+2^{-n-1})^m\Delta_\chi^m \e^{-s\Delta_\chi} f \, \frac{\dd s}{s+2^{-n-1}} \\ &\hspace{1cm} + X_J \e^{-(2^{-n}+t)\Delta_\chi} \int_{2^{-n-1}}^{2^{-n}} (s+2^{-n})^m \Delta_\chi^m \e^{-s\Delta_\chi} f \, \frac{\dd s}{s+2^{-n}}. 
\end{align*}
If $|J|\leq 2m$ and $t\in [2^{-j}, 2^{-j+1}]$, by Lemma~\ref{pointwiseestheat2} we have
\begin{align*}
 |X_J \e^{-t \Delta_\chi}f_n |
& \lesssim [2^{-n}+2^{-j}]^{-m} \e^{-c(2^{-j}+2^{-n})\Delta_\chi} \int_{2^{-n-1}}^{2^{-n}} |W^{(m)}_s f | \, \frac{\dd s}{s}
 \end{align*}
 for some $c>0$. In other words, once we define 
\[g_n =  \e^{-c2^{-n}\Delta_\chi} \int_{2^{-n-1}}^{2^{-n}} |W^{(m)}_s f | \, \frac{\dd s}{s},\]
we have
\begin{equation*} \label{estdeltam}
\sup_{t\in [2^{-j},2^{-j+1}]}\max_{|J|\leq 2m}  |X_J \e^{-t \Delta_\chi}f_n | \lesssim 2^{m\min \{j,n\}} \e^{-c 2^{-j}\Delta_\chi} g_n.
\end{equation*}
Therefore, by Proposition~\ref{contCRTN} and Lemma~\ref{lemmasomme}~(i), when $q<\infty$
\begin{align*}
\Bigg\|  \left(\sum_{j=0}^\infty \left(2^{j \frac{\alpha}{2}} | W^{(m), *}_{2^{-j}} f  | \right)^q \right)^{1/q}\Bigg\|_{L^p}
  &\lesssim \Bigg\| \sum_{j=0}^\infty  \left(2^{-j(m-\frac{\alpha}{2})} \e^{-c 2^{-j}\Delta_\chi}  \sum_{n=1}^\infty 2^{m\min \{j,n\}} g_n \right)^q\Bigg\|_{L^p} \\
&\lesssim \Bigg\| \sum_{n=1}^{\infty} \left(2^{n\frac{\alpha}{2}} g_n \right)^q\Bigg\|_{L^p}.
\end{align*}
Since by Lemma~\ref{pointwiseestheat2} there exists $c'>0$ such that
\[
g_n  \lesssim \e^{-c' 2^{-n}\Delta_\chi }|W^{(m)}_{2^{-n-2}} f|,
\]
Proposition~\ref{contCRTN} completes the proof of (ii) when $q<\infty$. We leave the details of the case $q=\infty$ to the reader.
\end{proof}

\subsection{Recursive characterizations}
As Sobolev spaces (see~\cite[Proposition 3.4]{BPTV}), also Besov and Triebel--Lizorkin spaces can be characterized recursively.
\begin{theorem}\label{teo_recursive}
Let $\alpha>0$ and $q\in [1,\infty]$.
\begin{itemize}
\item[(i)] If $p\in [1,\infty]$, then $f\in B^{p,q}_{\alpha+1}(\mu_\chi)$ if and only if $f\in L^p(\mu_\chi)$ and  $X_j f \in B^{p,q}_\alpha(\mu_\chi)$ for every $j\in \mathscr{I}$. In particular
\[
\|f\|_{B^{p,q}_{\alpha+1}(\mu_\chi)}\approx  \sum_{j=1}^\ell \|X_j f\|_{B^{p,q}_\alpha(\mu_\chi)} + \|f\|_{L^p(\mu_\chi)}.
\]
\item[(ii)] If $p\in (1,\infty)$, then $f\in F^{p,q}_{\alpha+1}(\mu_\chi)$ if and only if $f\in L^p(\mu_\chi)$ and  $X_j f \in F^{p,q}_\alpha(\mu_\chi)$ for every $j\in \mathscr{I}$. In particular
\[
\|f\|_{F^{p,q}_{\alpha+1}(\mu_\chi)}\approx \sum_{j=1}^\ell \|X_j f\|_{F^{p,q}_\alpha(\mu_\chi)} +\|f\|_{L^p(\mu_\chi)}.
\] 
\end{itemize}
\end{theorem}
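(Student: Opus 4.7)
I shall describe the argument for (ii); the Besov case (i) follows by the same steps, with $L^p(\mu_\chi;\ell^q(dt/t))$-type norms replaced by $\ell^q(dt/t;L^p(\mu_\chi))$-type norms, and Proposition~\ref{CRTNintegrale} and Proposition~\ref{contCRTN} invoked in their $L^p$-norm form rather than pointwise.

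\smallskip

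\emph{The direction $\Rightarrow$.} Suppose $f\in F^{p,q}_{\alpha+1}(\mu_\chi)$. The embedding~\eqref{embedLp} immediately gives $f\in L^p(\mu_\chi)$. To show $X_j f\in F^{p,q}_\alpha(\mu_\chi)$ I fix an integer $m>(\alpha+1)/2$, so that in particular $m>\alpha/2$. Since $X_j$ commutes with right convolutions,
$W^{(m)}_t X_j f = (X_j f) * t^m\Delta_\chi^m p_t^\chi$, and Lemma~\ref{pointwiseestheat2}(iii), applied with $k=m$ and $h=1$, yields
\[
 |W^{(m)}_t X_j f|\;\lesssim\; t^{-1/2}\,\e^{-ct\Delta_\chi}|f|.
\]
This bound is too crude on its own (not integrable against $t^{-\alpha q/2}\,dt/t$ for $\alpha\ge 0$). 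I therefore refine it by substituting the Calder\'on-type identity~\eqref{LPD} for $f$ with some integer $m'>\alpha/2$, producing $|W^{(m')}_s f|$ integrated in $s\in(0,1)$ on the right-hand side. Multiplying by $t^{-\alpha/2}$, taking the $\ell^q(dt/t)$ norm pointwise in $x$ followed by $L^p(\mu_\chi)$, and invoking Proposition~\ref{CRTNintegrale}(i),(iii) together with Lemma~\ref{lemmasomme}(ii) to swap the $t$- and $s$-integrals via Schur's lemma, I bound $\mathscr{F}^{p,q}_\alpha(X_j f)$ by $\mathscr{F}^{p,q}_{\alpha+1}(f)$ plus $\|\e^{-t_0\Delta_\chi}f\|_{L^p}$; this in turn is controlled by $\|f\|_{F^{p,q}_{\alpha+1}(\mu_\chi)}$ by Theorem~\ref{teo-equiv1}.

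\smallskip

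\emph{The direction $\Leftarrow$.} Assume $f\in L^p(\mu_\chi)$ and $X_k f\in F^{p,q}_\alpha(\mu_\chi)$ for every $k$. Using the factorisation $\Delta_\chi=-\sum_k X_k(X_k+c_k)$, write
\[
 W^{(m)}_t f \;=\; -t\sum_{k=1}^{\ell}(t\Delta_\chi)^{m-1}\e^{-t\Delta_\chi}\bigl(X_k(X_k f)+c_k X_k f\bigr).
\]
Lemma~\ref{pointwiseestheat2}(iii), applied to each summand with $k=m-1$ and $h=1$, gives the gain of the half-derivative characteristic of Bernstein's inequality:
\[
 |W^{(m)}_t f|\;\lesssim\; \sqrt{t}\,\sum_{k=1}^{\ell}\e^{-at\Delta_\chi}\bigl(|X_k f|+|f|\bigr).
\]
Expanding each $|X_k f|$ and $|f|$ on the right via the Calder\'on identity~\eqref{LPD} with an integer $m'>\alpha/2$, then multiplying by $t^{-(\alpha+1)/2}$ and performing the same $L^p(\mu_\chi;\ell^q(dt/t))$-manipulations as above (Proposition~\ref{CRTNintegrale} plus Schur's Lemma~\ref{lemmasomme}(ii)), yields $\mathscr{F}^{p,q}_{\alpha+1}(f)\lesssim \sum_k\mathscr{F}^{p,q}_\alpha(X_k f)+\|f\|_{L^p}$. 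The contraction property of the heat semigroup on $L^p(\mu_\chi)$ controls the remaining term $\|\e^{-\frac12\Delta_\chi}f\|_{L^p}$ by $\|f\|_{L^p}$.

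\smallskip

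\emph{Main obstacle.} The principal difficulty is that on a nonabelian Lie group $X_j$ does not commute with $\Delta_\chi$, and hence not with $\e^{-t\Delta_\chi}$, so the Euclidean identity $W^{(m)}_t X_j f=X_j W^{(m)}_t f$ is unavailable. The resolution lies in the kernel-level Gaussian estimates of Lemma~\ref{pointwiseestheat2}, which quantify the cost of an extra left-invariant derivative as the precise loss of a factor $t^{-1/2}$, uniformly for compositions of the form $\Delta_\chi^k \e^{-t\Delta_\chi}X_J$; this bypasses the need to move $X_j$ across the semigroup and provides the exact half-derivative shift required by the recursive step.
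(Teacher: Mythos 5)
The high-level ingredients you identify are the correct ones: the heat-kernel derivative estimates of Lemma~\ref{pointwiseestheat2}, the Calder\'on-type reproducing formula~\eqref{LPD}, the maximal-function bounds of Propositions~\ref{contCRTN} and~\ref{CRTNintegrale}, and Schur's lemma. Your observation about the obstacle (non-commutativity of $X_j$ with $\e^{-t\Delta_\chi}$, resolved at the kernel level by the $t^{-1/2}$ loss per derivative) is also exactly the heart of the matter, and your factorisation $\Delta_\chi=-\sum_k X_k(X_k+c_k)$ is correct.

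There is, however, a genuine gap in the order of operations, and it appears in both directions. You propose to first establish the crude bound (say in the $\Leftarrow$ direction)
\[
|W^{(m)}_t f|\;\lesssim\;\sqrt{t}\,\sum_k \e^{-at\Delta_\chi}\bigl(|X_k f|+|f|\bigr),
\]
and \emph{then} to substitute~\eqref{LPD} for $|X_k f|$ on the right. After that substitution the $s$-integral $\int_0^1 |W^{(m')}_s (X_kf)|\,\dd s/s$ is a fixed quantity independent of $t$, so when you multiply by $t^{-(\alpha+1)/2}$ the $t^{1/2}$ gain is exhausted and you are left with $\int_0^1 t^{-\alpha q/2}\,\dd t/t$, which diverges for every $\alpha>0$. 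There is no $(t+s)^{-\eta}$ kernel to feed into Lemma~\ref{lemmasomme}(ii), because that kernel can only come from applying Lemma~\ref{pointwiseestheat2}(iii) to a heat operator run for time $\sim t+s$. The cure is to invert the order: substitute~\eqref{LPD} \emph{first}, so that $W^{(m)}_t X_j$ acts on a piece already carrying an $\e^{-s\Delta_\chi}$ factor, merge the two semigroup factors into $\e^{-(t+s)\Delta_\chi}$, and only \emph{then} invoke Lemma~\ref{pointwiseestheat2}(iii). It is precisely this two-time-scale estimate that produces the factor $(t+s)^{-(m+1/2)}$ (or, in the paper's dyadic version, $2^{(m+\frac12)\min(j,n)}$) needed for Schur's lemma. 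The paper achieves this by decomposing $f=\sum_n f_n+\cdots$ and factoring $f_n=\e^{-2^{-n-2}\Delta_\chi}g_n$ so that the derivative loss can be charged to whichever of $\e^{-2^{-j}\Delta_\chi}$ or $\e^{-2^{-n-2}\Delta_\chi}$ has the larger time, and then applying Lemma~\ref{lemmasomme}(i). As written, your ``crude bound, then refine'' description cannot produce this coupling.

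A smaller structural remark: the paper does not run two symmetric direct arguments. It proves a single claim, $\mathscr{F}^{p,q}_\beta(X_i f)\lesssim\|f\|_{F^{p,q}_{\beta+1}}$ for all $\beta>-1$, and then derives the $\lesssim$ direction of the theorem algebraically by writing $\mathscr{F}^{p,q}_{\alpha+1}(f)=\mathscr{F}^{p,q}_{\alpha-1}(\Delta_\chi f)$ and applying the claim twice to the terms of $\Delta_\chi f=-\sum_i(X_i^2 f+c_iX_if)$. Your plan proves the two directions separately; this is possible, but it duplicates the Calder\'on/Schur machinery and is precisely where the ordering issue bites a second time. Proving the one-sided claim and deriving the other direction from it, as in the paper, is more economical and avoids having to repeat the delicate part.
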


\begin{proof}
We prove (ii), for the proof of (i) follows the same steps and is easier. We claim that for every $p\in (1,\infty)$, $q\in [1,\infty]$, $\beta > -1$ and $i\in  \{ 1,\dots, \ell\}$
\begin{equation}\label{claimalphaTL}
\mathscr{F}^{p,q}_{\beta}(X_i f)  \lesssim \mathscr{F}^{p,q}_{\beta + 1}( f) + \|f\|_{L^p} \approx \|f\|_{F^{p,q}_{\beta+1}},
\end{equation}
where we extended the definition of $\mathscr{F}^{p,q}_{\beta}$ to the case when $-1<\beta \leq 0$, by putting $[\beta]=0$ in that case.  Assuming the claim, we prove the theorem. Indeed, if $\alpha>0$, by the claim with $\beta=\alpha-1$
\begin{align*}
\mathscr{F}^{p,q}_{\alpha+1} (f)
 & = \mathscr{F}^{p,q}_{\alpha-1}  (\Delta_\chi f)\\
 &  \lesssim \sum_{i=1}^\ell \mathscr{F}^{p,q}_{\alpha-1} (X_i(X_if)) + \sum_{i=1}^\ell \mathscr{F}^{p,q}_{\alpha-1} (X_if)\\
 & \lesssim \sum_{i=1}^\ell ( \|X_i f\|_{F^{p,q}_\alpha} +  \|X_i f\|_{F^{p,q}_{\alpha-1}} ) \\
 & \lesssim \sum_{i=1}^\ell \|X_i f\|_{F^{p,q}_\alpha},
\end{align*}
which proves the inequality $\lesssim$ of the statement. The converse inequality also follows, since $\mathscr{F}^{p,q}_{\alpha} (X_i f)\lesssim \|f\|_{F^{p,q}_{\alpha+1}}$ by the claim with $\beta=\alpha$ and
\[
\|\e^{-\frac{1}{2}\Delta_\chi} X_i f\|_{L^p} \lesssim \|f\|_{L^p} \leq \|f\|_{F^{p,q}_\alpha}
\]
by Lemma~\ref{pointwiseestheat3}. Thus, it remains to prove the claim~\eqref{claimalphaTL}.

\smallskip

Let $\overline{m} = [(\beta+1)/2 ]+ 1$ and $m = [\beta/2 ]+ 1$.  By~\eqref{LPD}
\[
f = \frac{1}{(\overline{m}-1)!} \int_0^{1} W^{(\overline{m})}_t f \, \frac{\dd t}{t} +  \sum_{k=0}^{\overline{m}-1} \frac{1}{k!} W^{(k)}_1 f = \frac{1}{(\overline{m}-1)!}\sum_{n=1}^{\infty} f_n +  \sum_{k=0}^{\overline{m}-1} \frac{1}{k!} W^{(k)}_1 f,
\] 
where
\[f_n = \int_{2^{-n}}^{2^{-n+1}} W^{(\overline{m})}_t f \, \frac{\dd t}{t}.
\]
Hence
\[
\mathscr{F}_{\beta}^{p,q}(X_i f) \lesssim \mathscr{F}_{\beta}^{p,q}\left( X_i\sum_{n=1}^{\infty} f_n\right) +\mathscr{F}_{\beta}^{p,q}\left( \sum_{k=0}^{\overline{m}-1} X_i W^{(k)}_1  f\right).
\]
Define $g_n$ by
 \begin{align*}
   f_n & = \e^{-2^{-n-2}\Delta_\chi} \, 2^{\overline{m}} \int_{2^{-n}}^{2^{-n+1}} \e^{-(\frac{t}{2}-2^{-n-2})\Delta_\chi} W^{(\overline{m})}_{t/2} f\, \frac{\dd t}{t}  \eqqcolon \e^{-2^{-n-2}\Delta_\chi}  g_n.
\end{align*}
Notice that by Lemma~\ref{pointwiseestheat2} there exists $a_{m,1}>0$ such that
\begin{equation}\label{eqpermaxsup1}
\begin{aligned}
|\Delta_\chi^{m} \e^{-2^{-j}\Delta_\chi} X_i f_n |  
%&=| \Delta_\chi^{m} \e^{-2^{-j}\Delta_\chi} X_i \e^{-2^{-n-2}\Delta_\chi}  g_n|\\
& \lesssim  2^{j(m+\frac{1}{2})} \e^{-a_{m,1} 2^{-j} \Delta_\chi} | \e^{-2^{-n-2}\Delta_\chi}  g_n| \\
&\lesssim 2^{j(m+\frac{1}{2})} \e^{-(a_{m,1}2^{-j }+ 2^{-n-2})\Delta_\chi} |g_n|,
\end{aligned}
\end{equation}
but also $a_{2m+1}>0$ such that
\begin{equation}\label{eqpermaxsup2}
\begin{aligned}
|\Delta_\chi^{m} \e^{-2^{-j}\Delta_\chi} X_i f_n |  
%&=| \Delta_\chi^{m} \e^{-2^{-j}\Delta_\chi} X_i \e^{-(2^{-n-2})\Delta_\chi}  g_n|\\
&\lesssim \e^{- 2^{-j} \Delta_\chi} | \Delta_\chi^{m} X_i \e^{-(2^{-n-2})\Delta_\chi}  g_n| \\
&\lesssim 2^{n(m+\frac{1}{2})} \e^{-(2^{-j }+ a_{2m+1}2^{-n-2})\Delta_\chi} |g_n|.
\end{aligned}
\end{equation}
Since for $a'_{m} = \min(a_{m,1},1/4, a_{2m+1}/4)$ and for $a_m = \max(a_{m,1},1, a_{2m+1}/4)$ 
\begin{align*}
a_{m,1}2^{-j }+ 2^{-n-2} &\in  (a'_{m} (2^{-j }+ 2^{-n}), a_m (2^{-j }+ 2^{-n})),\\
 2^{-j }+ a_{2m+1}2^{-n-2}& \in (a'_{m} (2^{-j }+ 2^{-n}), a_m (2^{-j }+ 2^{-n})),
\end{align*}
there exists a constant $c>0$ depending only on $m$ such that
\[
|\Delta_\chi^{m} \e^{-2^{-j}\Delta_\chi} X_i f_n |   \lesssim 2^{(m+\frac{1}{2}) \min(n,j)}\e^{-c(2^{-j}+ 2^{-n})\Delta_\chi} |g_n|.
\]
Let $q<\infty$. By~\eqref{upsilondiscreto} and Proposition~\ref{contCRTN},
\begin{align*}
 \mathscr{F}_{\beta}^{p,q}\left(X_i \sum_{n=1}^{\infty} f_n\right)^q 
%& \approx \left\| \left( \sum_{j=0}^\infty \left( 2^{j \frac{\beta}{2}} \left|W^{(m)}_{2^{-j}} \sum_{n=1}^\infty  X_i f_n \right| \right)^q \right)^{1/q}   \right\|_{L^p} \\
&\lesssim \Bigg\| \left( \sum_{j=0}^\infty \left( 2^{j\frac{\beta}{2}} \sum_{n=1}^\infty  |W^{(m)}_{2^{-j}} X_i f_n | \right)^q \right)^{1/q}  \Bigg\|_{L^p} \\
& \lesssim  \Bigg\|\left( \sum_{j=0}^\infty \left( 2^{-j(m-\frac{\beta}{2})}  \e^{-c 2^{-j}\Delta_\chi}\sum_{n=1}^\infty 2^{\min(n,j)(m+\frac{1}{2})} \e^{-c2^{-n}\Delta_\chi} |g_n| \right)^q \right)^{1/q}  \Bigg\|_{L^p} \\
& \lesssim \Bigg\| \left( \sum_{j=0}^\infty \left( 2^{-j(m-\frac{\beta}{2})} \sum_{n=1}^\infty 2^{\min(n,j)(m+\frac{1}{2})} \e^{-c2^{-n}\Delta_\chi} |g_n| \right)^q \right)^{1/q}   \Bigg\|_{L^p},
\end{align*}
and by Lemma~\ref{lemmasomme}, the last term of these inequalities is controlled by
\begin{align*}
\Bigg\|\left( \sum_{n=0}^\infty \left( 2^{n \frac{\beta+1}{2}} \e^{-c2^{-n}\Delta_\chi} |g_n| \right)^q \right)^{1/q}   \Bigg\|_{L^p}  \lesssim  \Bigg\|\left( \sum_{n=0}^\infty \left( 2^{n \frac{\beta+ 1}{2}} |g_n| \right)^q \right)^{1/q}  \Bigg\|_{L^p},
\end{align*}
by Proposition~\ref{contCRTN} again. Observe now that by definition of $g_n$ and by Lemma~\ref{pointwiseestheat2}, there exists $c>0$ such that
\[
|g_n| \lesssim \e^{- c2^{-n}\Delta_\chi} \int_{2^{-n}}^{2^{-n+1}}  |W^{(\overline{m})}_{t/2} f| \, \frac{\dd t}{t}.
\]
Hence, by Proposition~\ref{contCRTN} and Jensen's inequality,
\begin{align*}
 \Bigg\| \left( \sum_{j=0}^\infty \left( 2^{n \frac{\beta+ 1}{2}} |g_n| \right)^q \right)^{1/q}   \Bigg\|_{L^p} & 
  \lesssim \Bigg\|\left( \sum_{j=0}^\infty \left( 2^{n \frac{\beta+ 1}{2}}   \int_{2^{-n}}^{2^{-n+1}}  |W^{(\overline{m})}_{t/2} f| \, \frac{\dd t}{t} \right)^q \right)^{1/q}   \Bigg\|_{L^p}\\
  & \lesssim \Bigg\| \left(  \int_{0}^1 \left( t^{- \frac{\beta+ 1}{2}}   |W^{(\overline{m})}_{t/2} f|\right)^{q} \, \frac{\dd t}{t} \right)^{1/q} \Bigg\|_{L^p}\\
  & \lesssim \mathscr{F}^{p,q}_{\beta+1} (f).
\end{align*}
We are then left with estimating the term $\mathscr{F}^{p,q}_{\beta}(\sum_{k=0}^{\overline{m}-1} X_i W^{(k)}_1 f)$. By Lemma~\ref{pointwiseestheat2} and Proposition~\ref{CRTNintegrale}, for every $k\in \{0,\dots, \overline{m}-1\}$
\begin{align*}
 \left\| \left(\int_0^1 \left( t^{-\frac{\beta}{2}} \left| W^{(\overline{m})}_t X_i  W^{(k)}_1 f \right| \right)^q \, \frac{\dd t}{t} \right)^{1/q}\right\|_{L^p}
&  \lesssim \left\| \left(\int_0^1 \left( t^{\overline{m}-\frac{\beta}{2}} \e^{-c\Delta_\chi} |f| \right)^q \, \frac{\dd t}{t} \right)^{1/q}\right\|_{L^p} \\
& \lesssim\|f\|_{L^p}.
\end{align*}
This completes the proof of the claim~\eqref{claimalphaTL} and thus that of Theorem~\ref{teo_recursive} in the case when $q<\infty$. The case $q=\infty$ is left to the reader.
\end{proof}

\subsection{Characterizations by differences}
One can also characterize Besov and Triebel--Lizorkin norms by means of finite differences. For $x,y\in G$, let
\[
\mathrm{D}_yf(x)= f(xy^{-1})-f(x).
\]
For $p\in [1,\infty]$ and $q\in [1,\infty)$, we define
\begin{equation*}
\mathscr{A}^{p,q}_{\alpha}(f)\coloneqq \left(\int_{|y|\leq 1} \left(\frac{\|\mathrm{D}_y f\|_{L^p(\mu_\chi)}}{|y|^\alpha}\right) ^q \, \frac{\dd \rho(y)}{V(|y|)}\right)^{1/q},
\end{equation*}
and
\begin{equation*}
\mathscr{S}^{\loc, q}_{\alpha}f(x) =\left(\int_0^1\left( \frac{1}{u^{\alpha} V(u)}\int_{|y|<u}|\mathrm{D}_yf(x)|\, \dd\rho(y) \right)^q\, \frac{\dd u}{u}\right)^{1/q}.
\end{equation*}
We also define
\begin{equation*}
\mathscr{A}^{p,\infty}_{\alpha}(f)\coloneqq \sup_{|y|\leq 1} \frac{\|\mathrm{D}_y f\|_{L^p(\mu_\chi)}}{|y|^\alpha}.
\end{equation*}
\begin{theorem}\label{teo_functional}
Let $\alpha \in (0,1)$.
\begin{itemize}
\item[(i)] If $p,q\in [1,\infty]$, then
\[
\|f\|_{B_\alpha^{p,q}(\mu_\chi)}\approx \mathscr{A}^{p,q}_{\alpha}(f)+ \|f\|_{L^p(\mu_\chi)}.
\]
\item[(ii)] If $p,q\in (1,\infty)$, then
\[
\|f\|_{F^{p,q}_\alpha(\mu_\chi)} \approx    \|\mathscr{S}^{\loc, q}_{\alpha}f\|_{L^p(\mu_\chi)}+\|f\|_{L^p(\mu_\chi)}.
\]
\end{itemize}
\end{theorem}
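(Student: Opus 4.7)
Since $\alpha\in(0,1)$ we have $[\alpha/2]+1=1$, so both norms feature only $W_t^{(1)} f = t\Delta_\chi\,\e^{-t\Delta_\chi} f$ and the decomposition~\eqref{LPD} with $m=1$ becomes $f = \int_0^1 \Delta_\chi \e^{-t\Delta_\chi} f\,\dd t + \e^{-\Delta_\chi} f$. The plan is to relate $W_t^{(1)} f$ to the differences $\mathrm{D}_y f$ through two complementary devices. Since $\Delta_\chi 1 = 0$, we have $\int_G p_t^\chi\,\dd\rho=1$ and $\int_G\Delta_\chi p_t^\chi\,\dd\rho=0$, giving the kernel identity
\begin{equation*}
W_t^{(1)} f(x) = \int_G t\Delta_\chi p_t^\chi(y)\,\mathrm{D}_y f(x)\, \dd\rho(y),
\end{equation*}
with Gaussian bound $|t\Delta_\chi p_t^\chi(y)|\lesssim (\delta\chi^{-1})^{1/2}(y)V(\sqrt t)^{-1}\,\e^{-b|y|^2/t}$ as in Lemma~\ref{estimates_ptgamma}. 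In the opposite direction, for $|y|\leq 1$ H\"ormander's condition provides a sub-unit horizontal path $\gamma$ from $e$ to $y$ of length $\approx |y|$, which yields the Taylor-type estimate $\|\mathrm{D}_y g\|_{L^p(\mu_\chi)} \lesssim |y|\sum_j\|X_j g\|_{L^p(\mu_\chi)}$, using $\chi(y)\lesssim 1$ on $B_1$ (see~\eqref{localdoubmugamma}) to absorb the Jacobian of right translation.

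For part (i), the direction $\mathscr{A}^{p,q}_\alpha(f)\lesssim\|f\|_{B^{p,q}_\alpha(\mu_\chi)}$ follows by applying $\mathrm{D}_y$ to the LPD and splitting $\int_0^1 \dd t=\int_0^{|y|^2}+\int_{|y|^2}^1$: on $(0,|y|^2)$ use the trivial bound to get $\|\mathrm{D}_y \Delta_\chi \e^{-t\Delta_\chi} f\|_{L^p}\lesssim t^{-1}\|W_t^{(1)}f\|_{L^p}$, and on $(|y|^2,1)$ combine the horizontal-path estimate with Lemma~\ref{pointwiseestheat3} to obtain $\|\mathrm{D}_y \Delta_\chi \e^{-t\Delta_\chi} f\|_{L^p}\lesssim |y|\,t^{-3/2}\|W^{(1)}_{t/2} f\|_{L^p}$. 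One then raises to the $q$-th power, integrates $\dd\rho(y)/V(|y|)$ over $|y|\leq 1$, reduces to a one-variable integral via the annular identity $\int_{|y|\leq 1}F(|y|)\dd\rho/V\approx\sum_{j\geq 0}F(2^{-j})$, and applies the 1D Hardy inequality, whose convergence uses exactly the hypothesis $\alpha<1$. The reverse inequality follows from the kernel identity and Minkowski in $y$: the local part $|y|\leq 1$ is handled by the same Hardy mechanism, while the tail $|y|>1$ contributes $\lesssim\|f\|_{L^p(\mu_\chi)}$, since $\|\mathrm{D}_y f\|_{L^p(\mu_\chi)}\lesssim(\chi(y)^{1/p}+1)\|f\|_{L^p(\mu_\chi)}$ and the Gaussian factor $\e^{-b|y|^2/t}$ for $t\in(0,1)$ beats the at most exponential growth of $\chi$, $\delta$ and $V$.

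Part (ii) runs along the same lines, now pointwise in $x$. The forward direction uses the kernel identity together with a dyadic decomposition in $|y|$ and the change of variable $u=\sqrt t$ to dominate $|W_t^{(1)} f(x)|$ pointwise by $\mathscr{S}^{\loc,q}_\alpha f(x)$, plus a rapidly decaying tail absorbed into $\|f\|_{L^p(\mu_\chi)}$. The reverse direction applies the LPD pointwise and bounds $|\mathrm{D}_y\Delta_\chi \e^{-t\Delta_\chi}f(x)|$ by means of the horizontal-path formula, producing suprema of $|X_j\Delta_\chi \e^{-t\Delta_\chi}f|$ along $x\gamma(s)^{-1}$ which are absorbed using Lemma~\ref{pointwiseestheat2}(ii) and Propositions~\ref{contCRTN}--\ref{CRTNintegrale}; combined with the discrete Littlewood--Paley characterization of Theorem~\ref{teo-equiv2}, this recovers $\|f\|_{F^{p,q}_\alpha(\mu_\chi)}$.

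The main obstacle is this last step: pointwise control of $|\mathrm{D}_y\Delta_\chi \e^{-t\Delta_\chi}f(x)|$ for $|y|\leq u$ forces one to estimate a supremum of horizontal derivatives of $\Delta_\chi \e^{-t\Delta_\chi}f$ along the path $s\mapsto x\gamma(s)^{-1}$ --- essentially a path-maximal function --- inside the Triebel--Lizorkin norm. The moving-point, moving-time heat semigroup machinery of Propositions~\ref{contCRTN} and~\ref{CRTNintegrale} is precisely what is needed to absorb these suprema into pointwise heat semigroup estimates; the presence of $\chi$ in the tail $|y|>1$ is a secondary technical complication handled by the super-exponential Gaussian decay of $p_t^\chi$ for $t\in(0,1)$.
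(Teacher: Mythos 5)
The paper itself does not prove Theorem~\ref{teo_functional}: it defers part~(i) to \cite[Theorem~1.16]{Feneuil} and part~(ii) to \cite[Theorem~1.3~(i)]{PV}, with the details recorded in~\cite[Section~3]{BPVsurvey}. Your outline matches the intended adaptation of those proofs: you correctly observe that $[\alpha/2]+1=1$, obtain the kernel representation
\[
W_t^{(1)}f(x)=\int_G \mathrm{D}_y f(x)\,t\Delta_\chi p_t^\chi(y)\,\dd\rho(y)
\]
from $\int_G\Delta_\chi p_t^\chi\,\dd\rho=0$, control $\mathrm{D}_y$ by a sub-unit horizontal path together with the local comparison~\eqref{localdoubmugamma}, reduce to a one-variable Hardy/Schur estimate whose convergence forces $\alpha<1$, and discard the tail $|y|>1$ because the Gaussian factor dominates the at-most-exponential growth of $\chi$, $\delta$ and $V$. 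Part~(i) as you describe it is sound.

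The one genuine gap is in the reverse inequality of part~(ii). After the horizontal-path bound and Lemma~\ref{pointwiseestheat2}(ii) you are left with quantities of the form $\e^{-a t\Delta_\chi}|W^{(1)}_{t/2}f|$ evaluated at a \emph{shifted} point $x\gamma(s)^{-1}$ with $|\gamma(s)|\lesssim u\lesssim\sqrt t$. Propositions~\ref{contCRTN} and~\ref{CRTNintegrale} let the time $t_j(x)$ depend measurably on $x$, but they do not vary the evaluation point: they cannot, by themselves, convert a supremum of heat averages over a spatial neighbourhood of $x$ into a heat average at $x$. You need the additional pointwise comparison
\[
\e^{-ct\Delta_\chi}h(xa^{-1})\lesssim\e^{-c't\Delta_\chi}h(x)\qquad\text{for } h\geq 0,\ |a|\leq\sqrt t,\ t\in(0,1),
\]
which follows from the two-sided Gaussian bound of Lemma~\ref{estimates_ptgamma}(iii), the triangle inequality for $d_C$, and~\eqref{localdoubmugamma}. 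Once this is in place the rest of your plan goes through, but as written, attributing the absorption of the path supremum to Propositions~\ref{contCRTN}--\ref{CRTNintegrale} overstates what they provide.
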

The proofs of~(i) and~(ii) can be obtained by suitably adapting the proofs of~\cite[Theorem 1.16]{Feneuil} and~\cite[Theorem 1.3~(i)]{PV} respectively. We omit the details, which are contained in~\cite[Section 3]{BPVsurvey}.

\section{Comparison Theorems}\label{sec:CT}

In this section, we establish embedding properties of Besov and Triebel--Lizorkin spaces, whose Euclidean counterparts can be found in~\cite[Proposition 2, p.\ 47 and Theorem p.\ 129]{TriebelTFS}. We begin by observing that if $X_\alpha^{p,q}(\mu_\chi)$ is either $B_\alpha^{p,q}(\mu_\chi)$ or $F_\alpha^{p,q}(\mu_\chi)$, then embeddings of the form
\[
X_\alpha^{p,q}(\mu_\chi) \hookrightarrow X_{\beta}^{r,s}(\mu_\chi), \qquad X_\alpha^{p,q}(\mu_\chi) \hookrightarrow L^{r}(\mu_\chi)
\]
may hold only if either $p=r$ or $p\neq r$ and $\mu_\chi =\lambda$, by a translation-invariance argument analogous to that of~\cite[Section 4]{BPTV}. We also recall that for $p\in (1,\infty)$ and $\alpha\geq 0$, the Sobolev space $L^p_\alpha(\mu_\chi)$ is defined by means of the norm (see~\cite[Section 3]{BPTV})
\begin{equation}\label{equivtranslation}
\| f\|_{L^p_{\alpha}(\mu_\chi)} = \| f\|_{L^p(\mu_\chi)} + \| \Delta_\chi^{\alpha/2} f\|_{L^p(\mu_\chi)}  \approx \| (\Delta_\chi + I)^{\alpha/2}f\|_{L^p(\mu_\chi)}.
\end{equation}
The following theorem concerns Besov spaces.
\begin{theorem}\label{teo_embeddings}
The following embeddings hold.
\begin{itemize}
\item[(i)] Let $p,q,q_1\in [1,\infty]$ and $\alpha,\alpha_1 \geq 0$. Then 
\[
B_\alpha^{p,q} (\mu_\chi)\hookrightarrow B_{\alpha_1}^{p,q_1}(\mu_\chi)
\]
if either $\alpha_1<\alpha$ or $\alpha_1=\alpha$ and $q_1\geq q$.
\item[(ii)] Let $1\leq p_0< p_1 \leq \infty$, $q\in [1,\infty]$ and $\alpha_0 \geq \alpha_1 \geq 0$. If $\frac{d}{p_1}- \alpha_1 = \frac{d}{p_0}-\alpha_0$, then 
\[B_{\alpha_0}^{p,q}(\lambda) \hookrightarrow B_{\alpha_1}^{p_1,q}(\lambda).\]
\item[(iii)] Let $p\in [1,\infty]$. Then 
\[
B^{p,1}_{d/p} (\lambda) \hookrightarrow L^\infty.
\] 
Moreover, if $q\in [1,\infty]$ and $\alpha>d/p$, then 
\[
B^{p,q}_\alpha(\lambda) \hookrightarrow L^\infty.
\]
\end{itemize}
\end{theorem}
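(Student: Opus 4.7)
All three parts are proved by combining the discrete Littlewood--Paley characterisation of Theorem~\ref{teo-equiv2}, the reproducing formula~\eqref{LPD}, and the $L^{p_0}\!\to\!L^{p_1}$ heat estimate of Lemma~\ref{normaetdeltaq}. A recurring device is the factorisation $W^{(m)}_t = 2^{m}\,\e^{-t\mathcal{L}/2}\,W^{(m)}_{t/2}$, which lets one trade $L^p\!\to\!L^{p_1}$ integrability for a negative power of $t$.

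For part~(i), fix an integer $m>\alpha/2$ and set $a_j=2^{j\alpha_1/2}\|W^{(m)}_{2^{-j}}f\|_{L^p(\mu_\chi)}$, $b_j=2^{j\alpha/2}\|W^{(m)}_{2^{-j}}f\|_{L^p(\mu_\chi)}$. Theorem~\ref{teo-equiv2} identifies the two Besov norms, modulo the common low-frequency term $\|\e^{-\Delta_\chi/2}f\|_{L^p(\mu_\chi)}$, with the $\ell^{q_1}$- and $\ell^q$-norms of these sequences. The case $\alpha_1=\alpha$, $q_1\geq q$ is the elementary inclusion $\ell^q\hookrightarrow\ell^{q_1}$; the case $\alpha_1<\alpha$ reduces to a weighted sequence inequality, since $a_j=2^{-j(\alpha-\alpha_1)/2}b_j$ with summable weight, to be finished by H\"older's inequality with exponents $q/q_1$ and $(q/q_1)'$ when $q_1<q$.

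For part~(ii), the scaling hypothesis rewrites as $\alpha_0-\alpha_1=d(1/p_0-1/p_1)$, forcing $\alpha_0>0$. Choosing $1/r=1+1/p_1-1/p_0$ in Lemma~\ref{normaetdeltaq} yields $\|\e^{-s\mathcal{L}}g\|_{L^{p_1}(\lambda)}\lesssim s^{-(\alpha_0-\alpha_1)/2}\|g\|_{L^{p_0}(\lambda)}$, and combining with the factorisation above one obtains
\[
t^{-\alpha_1/2}\,\|W^{(m)}_t f\|_{L^{p_1}(\lambda)}\;\lesssim\; t^{-\alpha_0/2}\,\|W^{(m)}_{t/2}f\|_{L^{p_0}(\lambda)}.
\]
Taking $L^q((0,1),\dd t/t)$ norms and changing variables delivers $\mathscr{B}^{p_1,q}_{\alpha_1}(f)\lesssim \mathscr{B}^{p_0,q}_{\alpha_0}(f)$. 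The low-frequency term $\|\e^{-\mathcal{L}/2}f\|_{L^{p_1}(\lambda)}$ is handled by writing $\e^{-\mathcal{L}/2}=\e^{-\mathcal{L}/4}\e^{-\mathcal{L}/4}$, applying Lemma~\ref{normaetdeltaq} to the first factor, and exploiting the freedom to pick $t_0=1/4$ in Theorem~\ref{teo-equiv1}.

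For part~(iii), the case $\alpha>d/p$ follows from the endpoint case $\alpha=d/p$, $q=1$ via part~(i), so we concentrate on $B^{p,1}_{d/p}(\lambda)\hookrightarrow L^\infty$. Fix $m=[d/(2p)]+1$; formula~\eqref{LPD} gives
\[
\|f\|_{L^\infty}\;\lesssim\;\int_0^1\|W^{(m)}_t f\|_{L^\infty}\,\frac{\dd t}{t}+\sum_{k=0}^{m-1}\|W^{(k)}_1 f\|_{L^\infty},
\]
and Lemma~\ref{normaetdeltaq} with $p_1=\infty$, combined with the factorisation, produces $\|W^{(m)}_t f\|_{L^\infty}\lesssim t^{-d/(2p)}\|W^{(m)}_{t/2}f\|_{L^p(\lambda)}$. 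Integrating in $\dd t/t$ reconstructs precisely $\mathscr{B}^{p,1}_{d/p}(f)$ after the substitution $t\mapsto t/2$, and the low-frequency terms are bounded via the same heat estimate together with the $L^p$-boundedness of $\mathcal{L}^k\e^{-\mathcal{L}/4}$, which follows from Lemma~\ref{pointwiseestheat3} by expanding $\mathcal{L}^k$ as a linear combination of $X_J$ with $|J|\leq 2k$. The only genuine technical point, shared by parts~(ii) and~(iii), is matching the $t$-exponent delivered by Lemma~\ref{normaetdeltaq} with the Besov scaling and choosing $t_0$ in Theorem~\ref{teo-equiv1} consistently at both ends of the inequality.
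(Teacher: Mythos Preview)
Your arguments for parts~(i) and~(ii) coincide with the paper's: H\"older's inequality (resp.\ the $\ell^q$ inclusions) for~(i), and the factorisation $W_t^{(m)}=2^m\e^{-t\mathcal L/2}W_{t/2}^{(m)}$ combined with Lemma~\ref{normaetdeltaq} for~(ii), followed by Theorem~\ref{teo-equiv1} to absorb the low-frequency term.

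For part~(iii) you take a genuinely different route. The paper does not use the reproducing formula~\eqref{LPD} directly; instead it bootstraps from~(ii), observing that $B^{p,1}_{d/p}(\lambda)$ embeds into a Besov space $B^{\infty,1}_\epsilon(\lambda)$ with $p_1=\infty$ and positive smoothness, and then invokes the embedding~\eqref{embedLp} into $L^\infty$. Your approach, by contrast, decomposes $f$ via~\eqref{LPD} and estimates each piece in $L^\infty$ using Lemma~\ref{normaetdeltaq}, which amounts to redoing the $L^{p}\!\to\!L^\infty$ heat estimate at the level of the decomposition itself. Your argument is more self-contained (it does not pass through an intermediate Besov space), and in particular it makes transparent why the endpoint $q=1$ is exactly what is needed at $\alpha=d/p$: the $L^1(\dd t/t)$ norm of $t^{-d/(2p)}\|W^{(m)}_{t/2}f\|_{L^p(\lambda)}$ is literally $\mathscr{B}^{p,1}_{d/p}(f)$. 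The paper's route is shorter once~(ii) is in hand, but yours works just as well and avoids any delicacy about matching the scaling condition in~(ii) with a strictly positive target smoothness.
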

\begin{proof}
We first consider (i). If $\alpha_1<\alpha$, the embedding is a consequence of H\"older's inequality. If $\alpha_1=\alpha$, it is a consequence of Theorem~\ref{teo-equiv2} and the inclusions of the $\ell^q$ spaces.

We now prove (ii). Let $t_0 \in (0,1)$, $m> \alpha/2$ be an integer and $q<\infty$. By Lemma~\ref{normaetdeltaq},  
\[
\| \e^{-t_0 \Ls } f\|_{L^{p_1}(\lambda)} \lesssim \| \e^{-\frac{t_0}{2} \Ls} f\|_{L^{p_0}(\lambda)} ,
\]
and, since $\frac{d}{p_1} - \frac{d}{p_0}= \alpha_1 -\alpha_0$,
\begin{align*}
 \int_0^1 \left( t^{-\frac{\alpha_1}{2}} \| \Ws_t^{(m)} f\|_{L^{p_1}(\lambda)}\right)^q \, \frac{\dd t}{t}& 
 \lesssim \int_0^1 \left( t^{-\frac{\alpha_0}{2}} \| \Ws_{t/2}^{(m)} f\|_{L^{p_0}(\lambda)}\right)^q \, \frac{\dd t}{t}\\
& \lesssim   \int_0^1 \left( t^{-\frac{\alpha_0}{2}} \| \Ws_t^{(m)}  f\|_{L^{p_0}(\lambda)}\right)^q \, \frac{\dd t}{t},
\end{align*}
the second inequality by a change of variables. The conclusion follows by Theorem~\ref{teo-equiv1}. The case $q=\infty$ can be proved analogously.

To prove (iii), let $0<\epsilon< d/p$ and observe that by (ii) and~\eqref{embedLp}
\[
B_{d/p}^{p,1}(\lambda) \hookrightarrow B_\epsilon^{\infty, 1}(\lambda)  \hookrightarrow L^\infty.
\]
If $\alpha>d/p$, then by~(i) and the embedding above
\[
B_\alpha^{p,q}(\lambda)\hookrightarrow B_{d/p}^{p,1}(\lambda )\hookrightarrow L^\infty,
\]
which were the desired embeddings.
\end{proof}

We now turn to comparison theorems for Triebel--Lizorkin spaces.
\begin{theorem}\label{teo_embeddings_TL}
The following embeddings hold.
\begin{itemize}
\item[(i)] Let $p,q,q_1\in [1,\infty]$ and $\alpha,\alpha_1 \geq 0$. Then 
\[
F_\alpha^{p,q} (\mu_\chi)\hookrightarrow F_{\alpha_1}^{p,q_1}(\mu_\chi)
\]
if either $\alpha_1<\alpha$ or $\alpha_1=\alpha$ and $q_1\geq q$.
\item[(ii)] Let $1< p_0< p_1 < \infty$, $q,r\in [1,\infty]$ and $\alpha_0 \geq \alpha_1 \geq 0$. If $\frac{d}{p_1}- \alpha_1 = \frac{d}{p_0}-\alpha_0$, then 
\[F_{\alpha_0}^{p_0,q}(\lambda) \hookrightarrow F_{\alpha_1}^{p_1,r}(\lambda).\]
\item[(iii)] If $p\in (1,\infty)$ and $\alpha\geq 0$, then $F^{p,2}_\alpha(\mu_\chi) = L^p_\alpha(\mu_\chi)$ with equivalence of norms.
\item[(iv)] If $p\in (1,\infty)$, $q\in [1,\infty]$ and $\alpha>d/p$, then
\[
F^{p,q}_\alpha(\lambda) \hookrightarrow L^\infty.
\]
\end{itemize}
\end{theorem}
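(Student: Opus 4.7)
The plan has four parts, one for each assertion. For part~(i), I would use the discrete characterization of Theorem~\ref{teo-equiv2}: the embedding reduces to showing pointwise that the $\ell^{q_1}$-norm of $(2^{j\alpha_1/2}|W^{(m)}_{2^{-j}} f(x)|)_j$ is controlled by the $\ell^{q}$-norm of $(2^{j\alpha/2}|W^{(m)}_{2^{-j}} f(x)|)_j$, then integrate in $L^p(\mu_\chi)$. When $\alpha_1=\alpha$ and $q_1\geq q$, this is the $\ell^q\hookrightarrow\ell^{q_1}$ inclusion. When $\alpha_1<\alpha$, write $2^{j\alpha_1/2}=2^{-j(\alpha-\alpha_1)/2}\cdot 2^{j\alpha/2}$ and absorb the summable geometric weight $2^{-j(\alpha-\alpha_1)/2}$ either via H\"older's inequality (when $q_1\leq q$, with exponent $r$ satisfying $1/r=1/q_1-1/q$) or via $\ell^q\hookrightarrow\ell^{q_1}$ together with the bound $2^{-j(\alpha-\alpha_1)/2}\leq 1$ (when $q_1>q$). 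The endpoint term $\|\e^{-t_0\Delta_\chi}f\|_{L^p(\mu_\chi)}$ carries over unchanged.

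Part~(ii) is a Jawerth--Franke type Sobolev embedding, which I would prove via a maximal-function argument. The endpoint term is immediate from Lemma~\ref{normaetdeltaq}: $\|\e^{-\Ls/2}f\|_{L^{p_1}(\lambda)}\lesssim\|\e^{-\Ls/4}f\|_{L^{p_0}(\lambda)}\leq\|f\|_{F^{p_0,q}_{\alpha_0}(\lambda)}$. For the principal term, I would first reduce the source $q$-index to $\infty$ via part~(i); then, writing $\Ws^{(m)}_t=\e^{-(t/2)\Ls}\circ 2^{-m}\Ws^{(m)}_{t/2}$ and combining Lemma~\ref{normaetdeltaq} with a Peetre-type pointwise estimate, obtain $|\Ws^{(m)}_tf(x)|\lesssim t^{(\alpha_1-\alpha_0)/2}\,\mathcal{M}_\sigma[\Ws^{(m)}_{t/2}f](x)$ for some $\sigma\in(0,p_0)$, where $\mathcal{M}_\sigma$ is a suitable $\sigma$-averaged local Hardy--Littlewood maximal operator and the scaling $d(1/p_1-1/p_0)/2=(\alpha_1-\alpha_0)/2$ is exactly the Sobolev relation. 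Finally, Proposition~\ref{contCRTN} together with the vector-valued maximal inequality applied in $L^{p_1/\sigma}$ yields the $F^{p_1,r}_{\alpha_1}$-bound. The main obstacle is the pointwise comparison in the sub-Riemannian nondoubling setting, which requires combining the Gaussian heat-kernel bounds of Lemma~\ref{estimates_ptgamma} with the exponential volume growth in~\eqref{pallegrandi}.

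For part~(iii) I would invoke the Littlewood--Paley--Stein square function characterization of $L^p$. Since $(\e^{-t\Delta_\chi})_{t>0}$ is a diffusion semigroup on $(G,\mu_\chi)$ by Lemma~\ref{estimates_ptgamma}~(i), Stein's classical $g$-function theorem (in its local version, accounting for the endpoint term $\|\e^{-\Delta_\chi/2}f\|_{L^p(\mu_\chi)}$) yields the equivalence $\|f\|_{L^p(\mu_\chi)}\approx\|f\|_{F^{p,2}_0(\mu_\chi)}$ for $\alpha=0$. For $\alpha>0$, I would factorise $W^{(m)}_tf=(t\Delta_\chi)^{m-\alpha/2}\e^{-t\Delta_\chi}\bigl(t^{\alpha/2}\Delta_\chi^{\alpha/2}f\bigr)$ via the holomorphic functional calculus of~\cite{HMM}, choosing the integer $m$ large enough that $m-\alpha/2>0$; this reduces $\|f\|_{F^{p,2}_\alpha}\approx\|f\|_{L^p_\alpha(\mu_\chi)}$ to the $\alpha=0$ case applied to $g=\Delta_\chi^{\alpha/2}f$, together with the $m$-flexibility granted by Theorem~\ref{teo-equiv1}. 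The main difficulty is handling the non-integer fractional power $\Delta_\chi^{\alpha/2}$, for which the sectorial calculus of~\cite{HMM} is essential (recall that $\Delta_\chi$ admits no Mihlin--H\"ormander multiplier theorem when $\chi\neq 1$).

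Finally, part~(iv) will follow by combining (i), (iii), and the Sobolev embedding $L^p_\alpha(\lambda)\hookrightarrow L^\infty$ for $\alpha>d/p$ established in~\cite{BPTV}. Given $\alpha>d/p$, pick $\alpha'$ with $d/p<\alpha'<\alpha$: part~(i) gives $F^{p,q}_\alpha(\lambda)\hookrightarrow F^{p,2}_{\alpha'}(\lambda)$, part~(iii) identifies the latter with $L^p_{\alpha'}(\lambda)$, and the Sobolev embedding concludes.
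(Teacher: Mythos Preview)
Your treatment of parts~(i) and~(iv) matches the paper. Part~(iii) is close in spirit---both rely on Littlewood--Paley--Stein theory---but your reduction to the ``$\alpha=0$ case'' via the factorisation $t^{-\alpha/2}W^{(m)}_t f=(t\Delta_\chi)^{m-\alpha/2}\e^{-t\Delta_\chi}(\Delta_\chi^{\alpha/2}f)$ leaves a fractional exponent $m-\alpha/2$, and Theorem~\ref{teo-equiv1} only provides $m$-independence for \emph{integer} $m$, so the chain does not close as written. The paper sidesteps this by citing directly the weighted square-function equivalence $\|\Delta_\chi^{\alpha/2}f\|_{L^p}\approx\big\|\big(\int_0^\infty(t^{-\alpha/2}|W^{(m)}_t f|)^2\,\frac{\dd t}{t}\big)^{1/2}\big\|_{L^p}$ for integer $m$ from~\cite{Meda,PV}, and then handling the truncation from $(0,\infty)$ to $(0,1)$ by a tail estimate.

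The genuine gap is in part~(ii). Your claimed pointwise inequality $|\Ws^{(m)}_t f(x)|\lesssim t^{(\alpha_1-\alpha_0)/2}\,\mathcal{M}_\sigma[\Ws^{(m)}_{t/2}f](x)$ cannot hold: the factor $t^{(\alpha_1-\alpha_0)/2}=t^{-\frac{d}{2}(\frac{1}{p_0}-\frac{1}{p_1})}$ in Lemma~\ref{normaetdeltaq} is an $L^{p_0}\to L^{p_1}$ \emph{operator-norm} gain from ultracontractivity; it is not a pointwise bound, and applying $\e^{-s\Ls}$ to a nonnegative function at a single point produces no factor of $s$. In addition, a Fefferman--Stein vector-valued inequality for a Hardy--Littlewood-type $\mathcal{M}_\sigma$ is not available in this nondoubling setting (Proposition~\ref{contCRTN} is specific to the heat semigroup). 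The paper runs instead the Jawerth--Franke distribution-function argument: after reducing via~(i) to $F^{p_0,\infty}_{\alpha_0}(\lambda)\hookrightarrow F^{p_1,1}_{\alpha_1}(\lambda)$, it uses Lemma~\ref{normaetdeltaq} only as an $L^{p_0}\to L^\infty$ bound, yielding $\|2^{j\alpha_1/2}\Ws^{(m)}_{2^{-j}}f\|_{L^\infty}\lesssim 2^{jd/(2p_1)}$ under the normalisation $\|f\|_{F^{p_0,\infty}_{\alpha_0}}=1$; the distribution function of $\sum_j 2^{j\alpha_1/2}|\Ws^{(m)}_{2^{-j}}f|$ is then decomposed by splitting the sum at $K(t)$ with $2^{Kd/(2p_1)}\approx t$, so that the piece $j\leq K$ is uniformly $\leq t/2$ while the tail $j>K$ is controlled by $2^{-K(\alpha_0-\alpha_1)/2}\sup_j 2^{j\alpha_0/2}|\Ws^{(m)}_{2^{-j}}f|$, reducing everything to the $L^{p_0}$-norm of the sup.
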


\begin{proof}
We begin with (i). If $\alpha_1<\alpha$, the embedding is a consequence of H\"older's inequality. If $\alpha_1=\alpha$, it is a consequence of Theorem~\ref{teo-equiv2} and the inclusions of the $\ell^q$ spaces.

We skip the proof of~(ii) for a moment, and prove (iii). By Proposition~\ref{teo-equiv1}, it will be enough to prove that for every $p\in (1,\infty)$, $\alpha\geq 0$, $t_0\in (0,1)$ and $m>\alpha/2$ integer
\begin{equation}\label{embediii}
\|f\|_{L^p_\alpha(\mu_\chi)} \approx \| \e^{-t_0\Delta_\chi}f\|_{L^p(\mu_\chi)} + \Bigg\|\left( \int_0^1 (t^{-\frac{\alpha}{2}} |W^{(m)}_t f|)^2 \, \frac{\dd t}{t} \right)^{1/2} \Bigg\|_{L^p(\mu_\chi)}.
\end{equation}
We first recall that by Littlewood--Paley--Stein theory (see~\cite{Meda} or~\cite[p.\ 6]{PV}),
 \begin{align*}
 \| \Delta_\chi^{\alpha/2}f\|_{L^p} \approx 
 %\left\| \left( \int_0^\infty \left( | W^{(m-\frac{\alpha}{2})}_t \Delta_\chi^{\alpha/2} f|\right)^2 \, \frac{\dd t}{t}\right)^{1/2} \right\|_{L^p} =
\Bigg\| \left( \int_0^\infty \left( t^{-\frac{\alpha}{2}} | W^{(m)}_t f|\right)^2 \, \frac{\dd t}{t}\right)^{1/2} \Bigg\|_{L^p}.
\end{align*}
The inequality $\gtrsim$ of~\eqref{embediii} follows at once, since
\[
\| \e^{-t_0\Delta_\chi}f\|_{L^p} \lesssim \| f\|_{L^p}
\]
and
\[
 \int_0^1 \left(t^{-\frac{\alpha}{2}} |W^{(m)}_t f|\right)^2 \, \frac{\dd t}{t}   \lesssim \int_0^\infty \left( t^{-\frac{\alpha}{2}} |W^{(m)}_t f|\right)^2 \, \frac{\dd t}{t} .
\]
To prove the inequality $\lesssim$ of~\eqref{embediii}, observe that
\begin{align*}
\int_1^\infty \left( t^{-\frac{\alpha}{2}} |W^{(m)}_t f|\right)^2 \, \frac{\dd t}{t} 
%&\leq \int_1^\infty \left( |W^{(m)}_t f |\right)^2 \, \frac{\dd t}{t}\\
& \leq \int_1^\infty \left( t^{m} |\Delta_\chi^{m} \e^{-(t-t_0)\Delta_\chi} \e^{-t_0\Delta_\chi}f|\right)^2 \, \frac{\dd t}{t}\\
& \lesssim \int_0^\infty \left(|W^{(m)}_t \e^{-t_0\Delta_\chi}f|\right)^2 \, \frac{\dd t}{t},
\end{align*}
the last inequality by a change of variables in the integral. Thus, again by Littlewood--Paley--Stein theory
\begin{align*}
\left\| \left( \int_1^\infty \left( t^{-\frac{\alpha}{2}} | W^{(m)}_t f|\right)^2 \, \frac{\dd t}{t}\right)^{1/2} \right\|_{L^p}  \lesssim \|\e^{-t_0\Delta_\chi}f\|_{L^p}.
\end{align*}
This proves that
\begin{align}\label{Sob4}
\|\Delta_\chi^{\alpha/2} f\|_{L^p} \lesssim \|\e^{-t_0\Delta_\chi}f\|_{L^p} +  \left\|\left( \int_0^1 \left(t^{-\frac{\alpha}{2}} |W^{(m)}_t f|\right)^2 \, \frac{\dd t}{t} \right)^{1/2} \right\|_{L^p},
\end{align}
which in particular implies that for every $m>0$ one has
\begin{align}\label{Sob5}
\| f\|_{L^p} & \lesssim \|\e^{-t_0\Delta_\chi}f\|_{L^p} +  \left\|\left( \int_0^1 \left( |W^{(m)}_t f|\right)^2 \, \frac{\dd t}{t} \right)^{1/2} \right\|_{L^p}.
\end{align}
It remains to observe that for every $\alpha \geq 0$
\[
 \int_0^1 \left( |W^{(m)}_t f|\right)^2 \, \frac{\dd t}{t}  \lesssim  \int_0^1 \left(t^{-\frac{\alpha}{2}} |W^{(m)}_t  f|\right)^2 \, \frac{\dd t}{t}
\]
which together with~\eqref{Sob4} and~\eqref{Sob5} proves the inequality $\lesssim$ of~\eqref{embediii}.

We now prove (iv). Let $\alpha> d/p$ and $\beta$ be such that $d/p<\beta<\alpha$. Then by (i),~(iii) and the embeddings of Sobolev spaces (see~\cite[Theorems 1.1 and 4.4]{BPTV})
\[
F^{p,q}_\alpha(\lambda) \hookrightarrow F^{p,2}_\beta(\lambda) = L^p_\beta(\lambda) \hookrightarrow L^\infty.
\]
It remains to prove (ii). Observe that by~(i) it is enough to prove that
\begin{equation}\label{embeddingii}
F^{p_0,\infty}_{\alpha_0}(\lambda)\hookrightarrow F^{p_1,1}_{\alpha_1}(\lambda).
\end{equation}
Let $m_0>\frac{\alpha_0}{2}>\frac{\alpha_1}{2}$ and to simplify the notation, define the operators 
\[
\mathcal{T}_j^0 = 2^{j \frac{\alpha_0}{2}} \Ws^{(m_0)}_{2^{-j}}, \qquad \mathcal{T}_j^1 =2^{j \frac{\alpha_1}{2}} \Ws^{(m_0)}_{2^{-j}},
\]
and observe that
\[
\| f\|_{F^{p_0,\infty}_{\alpha_0}(\lambda)} = \big\| \sup_{j\in \N} |\mathcal{T}_j^0 f| | \big\|_{L^{p_0}(\lambda)}, \qquad \| f\|_{F^{p_1,1}_{\alpha_1}(\lambda)} = \Big\| \sum_{j=0}^\infty| \mathcal{T}_j^1 f|| \Big\|_{L^{p_1}(\lambda)}.
\]
Without loss of generality, we may assume that $\| f\|_{F^{p_0,\infty}_{\alpha_0}(\lambda)}=1$. By Lemma~\ref{normaetdeltaq},
\begin{align*}
\| \mathcal{T}_j^1 f\|_{L^\infty} &
= 2^{-j \frac{\alpha_0-\alpha_1}{2}}  \| 2^{-j (m_0-\frac{\alpha_0}{2})} \e^{-2^{-{j-1}}\Ls} \e^{-2^{-{j-1}}\Ls}\Ls^{m_0}f\|_{L^\infty}\\
&\lesssim 2^{-j \frac{\alpha_0-\alpha_1}{2} + j\frac{d}{2p_0}}\| \mathcal{T}_{j-1}^0 f\|_{L^{p_0}(\lambda)},
\end{align*}
so that, for every $K\in \N$,
\begin{equation}\label{eq0K}
\sum_{j=0}^K |\mathcal{T}_j^1 f| \lesssim \sum_{j=0}^K 2^{-j \frac{\alpha_0-\alpha_1}{2} + j\frac{d}{2p_0}}.
\end{equation}
Moreover, one has
\begin{equation}\label{eqK+1infty}
\sum_{j=K+1}^\infty |\mathcal{T}_j^1 f| =  \sum_{j=K+1}^\infty 2^{-j \frac{\alpha_0 -\alpha_1}{2}} |\mathcal{T}_j^0 f| \lesssim 2^{-K\frac{\alpha_0 -\alpha_1}{2}} \sup_{j\in \N} |\mathcal{T}_j^0f|.
\end{equation}
Now,
\begin{align*}
\| f\|_{F^{p_1,1}_{\alpha_1}(\lambda)}^{p_1} 
& = p_1\int_0^\infty t^{p_1-1} \lambda \Bigg( \Bigg\{ x \colon \sum_{j=0}^\infty |\mathcal{T}_j^1 f(x)|>t \Bigg\} \Bigg) \, \dd t = \int_0^1 \dots \, \dd t + \int_1^\infty\dots \, \dd t.
\end{align*}
By~\eqref{eqK+1infty} with $K=-1$, there exists $C>0$ such that
\[
\Bigg\{ x \colon \sum_{j=0}^\infty |\mathcal{T}_j^1 f(x)|>t \Bigg\} \subset \Bigg\{ x \colon \sup_{j\in \N} |\mathcal{T}_j^0 f(x)| > C t \Bigg\}
\]
and hence
\begin{align*}
\int_0^{1} t^{p_1} \lambda \Bigg( \Bigg\{ x \colon \sum_{j=0}^\infty |\mathcal{T}_j^1 f(x)|>t \Bigg\} \Bigg) \, \frac{\dd t}{t} 
& \lesssim \int_0^1  t^{p_0} \lambda \Bigg( \Bigg\{ x \colon  \sup_{j\in \N} |\mathcal{T}_j^0 f(x)|>C t \Bigg\} \Bigg) \, \frac{\dd t}{t}\\
& \lesssim  \Big\| \sup_{j\in \N} |\mathcal{T}_j^0 f| \Big\|_{L^{p_0}(\lambda)}^{p_0}\\
& \lesssim 1.
\end{align*}
Observe now that
\[
\Bigg\{ x \colon \sum_{j=0}^\infty |\mathcal{T}_j^1 f(x)|>t \Bigg\} \subset \Bigg\{ x \colon \sum_{j=K(t)+1}^\infty |\mathcal{T}_j^1 f(x)|>\frac{t}{2}\Bigg\}
\]
where $K=K(t)$ is the largest integer such that
\[
 \sum_{j=0}^K 2^{\frac{j}{2}( \alpha_1-\alpha_0+ \frac{d}{p_0})} = \sum_{j=0}^K 2^{\frac{jd}{2p_1}} <\frac{t}{2}.
\]
In other words, $K=K(t)$ is such that $2^{\frac{Kd}{2p_1} } \approx t$. By~\eqref{eqK+1infty} 
\[
 \Bigg\{ x \colon \sum_{j=K(t)+1}^\infty |\mathcal{T}_j^1 f(x)|>\frac{t}{2}\Bigg\} \subset \Bigg\{ x \colon \sup_{j\in \N} |\mathcal{T}_j^0f(x)|>C t 2^{-K\frac{\alpha_1 -\alpha_0}{2}}\Bigg\},
\]
and
\[
t 2^{-K\frac{\alpha_1 -\alpha_0}{2}} \approx t^{\frac{p_1}{p_0}}.
\]
Then,
\begin{align*}
\int_1^\infty  t^{p_1-1} \lambda \Bigg(\Bigg\{ x \colon \sum_{j=0}^\infty |\mathcal{T}_j^1 f(x)|>t \Bigg\} \Bigg) \, \dd t
& \lesssim \int_1^\infty  t^{p_1-1} \lambda \Bigg( \Bigg\{ x \colon  \sup_{j\in \N} |\mathcal{T}_j^0 f(x)|>C t^{\frac{p_1}{p_0}}  \Bigg\} \Bigg) \, \dd t\\
& \lesssim \int_0^\infty  s^{p_0-1} \lambda \Bigg( \Bigg\{ x \colon  \sup_{j\in \N} |\mathcal{T}_j^0 f(x)|> s\  \Bigg\} \Bigg) \, \dd s\\
&  \lesssim 1.
\end{align*}
The proof is complete.
\end{proof}

The following result is the counterpart of~\cite[Section 2.3.2, Proposition 2]{TriebelTFS} in the Euclidean context. It compares Besov with Triebel--Lizorkin spaces.
\begin{theorem}\label{teo_embeddingp2}
Let $p\in (1,\infty)$, $q\in [1,\infty]$ and $\alpha\geq 0$. Then
\[
B^{p,\min(p,q)}_\alpha(\mu_\chi) \hookrightarrow F^{p,q}_\alpha(\mu_\chi)\hookrightarrow B_\alpha^{p,\max(p,q)} (\mu_\chi).
\]
\end{theorem}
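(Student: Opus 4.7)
The key observation is that the $L^p$-term $\|\e^{-\frac{1}{2}\Delta_\chi} f\|_{L^p(\mu_\chi)}$ is common to both norms, so the embedding reduces to comparing the seminorms $\mathscr{B}^{p,q}_\alpha(f)$ and $\mathscr{F}^{p,q}_\alpha(f)$. Setting $g(t,x) := t^{-\alpha/2} |W^{(m)}_t f(x)|$ with $m = [\alpha/2]+1$, the Besov seminorm integrates first in $x$ and then in $t$, while the Triebel--Lizorkin seminorm reverses the order. Hence the embedding is essentially a norm-swapping estimate, governed by Minkowski's integral inequality together with the monotonicity in the second index already established in Theorem~\ref{teo_embeddings_TL}~(i) and the identification $B^{p,p}_\alpha(\mu_\chi) = F^{p,p}_\alpha(\mu_\chi)$ noted after Definition~\ref{maindefinition}.

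For the first embedding $B^{p,\min(p,q)}_\alpha(\mu_\chi) \hookrightarrow F^{p,q}_\alpha(\mu_\chi)$: if $q \geq p$, then $\min(p,q) = p$, and the chain $B^{p,p}_\alpha = F^{p,p}_\alpha \hookrightarrow F^{p,q}_\alpha$ follows at once from Theorem~\ref{teo_embeddings_TL}~(i). If $q \leq p$, set $r := p/q \geq 1$; Minkowski's integral inequality applied to $|g(t,x)|^q$, with $L^r(\mu_\chi)$ outside and $\dd t/t$ inside, yields
\[
\left\| \int_0^1 |g(t,\cdot)|^q \, \frac{\dd t}{t} \right\|_{L^{p/q}(\mu_\chi)} \leq \int_0^1 \|g(t,\cdot)\|_{L^p(\mu_\chi)}^q \, \frac{\dd t}{t},
\]
whose $(1/q)$-th power is exactly $\mathscr{F}^{p,q}_\alpha(f) \leq \mathscr{B}^{p,q}_\alpha(f)$, whence $B^{p,q}_\alpha(\mu_\chi) \hookrightarrow F^{p,q}_\alpha(\mu_\chi)$.

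For the second embedding $F^{p,q}_\alpha(\mu_\chi) \hookrightarrow B^{p,\max(p,q)}_\alpha(\mu_\chi)$: if $q \leq p$, then $\max(p,q) = p$, and Theorem~\ref{teo_embeddings_TL}~(i) combined with $F^{p,p}_\alpha = B^{p,p}_\alpha$ gives $F^{p,q}_\alpha \hookrightarrow F^{p,p}_\alpha = B^{p,p}_\alpha$. If $q \geq p$, set $r := q/p \geq 1$; Minkowski's integral inequality is now applied to $|g(t,x)|^p$ with $L^r((0,1),\dd t/t)$ outside and $\mu_\chi$ inside, providing
\[
\left( \int_0^1 \left( \int_G |g(t,x)|^p \, \dd \mu_\chi(x) \right)^{q/p} \, \frac{\dd t}{t} \right)^{p/q} \leq \int_G \left( \int_0^1 |g(t,x)|^q \, \frac{\dd t}{t} \right)^{p/q} \, \dd \mu_\chi(x),
\]
whose $(1/p)$-th power is $\mathscr{B}^{p,q}_\alpha(f) \leq \mathscr{F}^{p,q}_\alpha(f)$. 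The limit case $q=\infty$ is immediate from the pointwise bound $\sup_{t\in (0,1)} \|g(t,\cdot)\|_{L^p(\mu_\chi)} \leq \| \sup_{t\in (0,1)} |g(t,\cdot)|\|_{L^p(\mu_\chi)}$ on the one side, and from Theorem~\ref{teo_embeddings_TL}~(i) applied to $q_1=\infty$ on the other. Since the whole argument rests only on Minkowski's integral inequality and on previously established monotonicity in the second index, I anticipate no substantial obstacle; in particular, no further input from the sub-Laplacian $\Delta_\chi$ or from the structure of $G$ is needed at this stage.
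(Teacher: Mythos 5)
Your proof is correct and rests on the same mechanism as the paper's: Minkowski's integral inequality in one direction, and monotonicity of the $\ell^q$ (respectively $L^q$) scale in the other, together with the identification $B^{p,p}_\alpha(\mu_\chi) = F^{p,p}_\alpha(\mu_\chi)$. The only cosmetic difference is that you work directly with the continuous-parameter seminorms and outsource the trivial sub-cases to Theorem~\ref{teo_embeddings_TL}~(i), whereas the paper first passes to the discrete characterization of Theorem~\ref{teo-equiv2} and establishes both inequalities at once in a single chain of the form $\mathscr{B}^{p,p}_\alpha \lesssim \mathscr{F}^{p,q}_\alpha \lesssim \mathscr{B}^{p,q}_\alpha$ (when $p\ge q$, and symmetrically when $p<q$).
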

\begin{proof}
Let first $p\geq q$. Then, since $\ell^q \hookrightarrow \ell^p$,
\begin{align*}
 \left(\sum_{j=1}^\infty \left(2^{j \frac{\alpha}{2}} \| W^{(m)}_{2^{-j}} f\|_{L^p} \right)^p \right)^{1/p}
 &  = \left( \int_G \| 2^{j\frac{\alpha}{2}}W^{(m)}_{2^{-j}} f \|_{\ell^p}^p \, \dd \mu_\chi \right)^{1/p}\\
&\leq  \left( \int_G \| 2^{j \frac{\alpha}{2}} W^{(m)}_{2^{-j}} f \|_{\ell^q}^p \, \dd \mu_\chi \right)^{1/p}\\
& =  \left\| \sum_{j=1}^\infty \left(2^{j\frac{\alpha}{2}} | W^{(m)}_{2^{-j}} f| \right)^q \right\|_{L^{p/q}}^{1/q},
\end{align*}
and by the triangle inequality in $L^{p/q}(\mu_\chi)$,
\begin{align*}
\left\| \sum_{j=1}^\infty \left(2^{j\frac{\alpha}{2}} |W^{(m)}_{2^{-j}}  f| \right)^q \right\|_{L^{p/q}}^{1/q}
& \leq    \left( \sum_{j=1}^\infty 2^{qj\frac{\alpha}{2}} \| (W^{(m)}_{2^{-j}}  f)^q \|_{L^{p/q}}\right)^{1/q}\\
& = \left( \sum_{j=1}^\infty \left(2^{j\frac{\alpha}{2}} \| W^{(m)}_{2^{-j}}  f \|_{L^{p}}\right)^q\right)^{1/q}.
\end{align*}
The conclusion follows by Theorem~\ref{teo-equiv2}. Similarly, if $p< q<\infty$ then
\begin{align*}
\left( \sum_{j=1}^\infty \left(2^{j \frac{\alpha}{2}} \| W^{(m)}_{2^{-j}}  f \|_{L^{p}}\right)^q\right)^{1/q}
& = \left\| \int_G 2^{jp \frac{\alpha}{2}} |W^{(m)}_{2^{-j}}  f|^p \, \dd \mu_\chi \right\|_{\ell^{q/p}}^{1/p}\\
& \leq  \left(\int_G \| 2^{jp\frac{\alpha}{2}} |W^{(m)}_{2^{-j}}  f|^p \|_{\ell^{q/p}} \, \dd \mu_\chi \right)^{1/p}\\
& =  \left(\int_G \left( \sum_{j=0}^\infty \left( 2^{j \frac{\alpha}{2}} |W^{(m)}_{2^{-j}}  f|\right)^q \right)^{p/q} \, \dd \mu_\chi \right)^{1/p}\\
& \leq \left(\int_G \sum_{j=0}^\infty \left( 2^{j \frac{\alpha}{2}} |W^{(m)}_{2^{-j}}  f|\right)^p  \, \dd \mu_\chi \right)^{1/p}\\
& = \left(\sum_{j=0}^\infty \left( 2^{j\frac{\alpha}{2}} \|W^{(m)}_{2^{-j}}  f\|_{L^p}\right)^p\right)^{1/p},
\end{align*}
and the conclusion follows. The proof in the case $p< q=\infty$ is easier and omitted.
\end{proof}

\section{Complex interpolation}\label{sec:IP}
In this section we describe the complex interpolation properties of Besov and Triebel--Lizorkin spaces. Given a compatible couple of Banach spaces $A_0$ and $A_1$, we denote with $(A_0,A_1)_{[\theta]}$ the intermediate space of index $\theta \in (0,1)$ in the complex method (see~\cite{BerghLofstrom}). We recall for future convenience that by~\cite[Theorem 4.7.1, p.\ 102]{BerghLofstrom}  and~\cite[p.\ 49]{BerghLofstrom}, one has
\begin{equation}\label{interpinequality}
\| a\|_{(A_0, A_1)_{[\theta]} } \lesssim \| a\|_{A_0}^{1-\theta} \|a\|_{A_1}^{\theta}
\end{equation}
for every $\theta \in (0,1)$.

\begin{theorem}\label{interpolation}
Let $\alpha_0,\alpha_1\geq 0$,  $\theta \in (0,1)$, $\alpha_\theta = (1-\theta)\alpha_0 + \theta \alpha_1$ and $q_0,q_1\in [1,\infty]$.
\begin{itemize}
\item[(i)] If $ p_0,p_1 \in [1,\infty]$, then 
\[(B^{p_0,q_0}_{\alpha_0}(\mu_\chi), B^{p_1,q_1}_{\alpha_1}(\mu_\chi))_{[\theta]} = B^{p_\theta,q_\theta}_{\alpha_\theta}(\mu_\chi),\]
where $\frac{1}{p_\theta} = \frac{1-\theta}{p_0} + \frac{\theta}{p_1}$ and $\frac{1}{q_\theta} = \frac{1-\theta}{q_0} + \frac{\theta}{q_1}$.
\item[(ii)] If $p_0,p_1 \in (1,\infty)$, then 
\[
(F^{p_0,q_0}_{\alpha_0}(\mu_\chi), F^{p_1,q_1}_{\alpha_1}(\mu_\chi))_{[\theta]} = F^{p_\theta,q_\theta}_{\alpha_\theta}(\mu_\chi),\]
where $p_\theta$ and $q_\theta$ are as above.
\end{itemize}
\end{theorem}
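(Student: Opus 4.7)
The plan is to realize both families of spaces as retracts of weighted vector-valued $L^p$-spaces, and then invoke the standard complex interpolation for the latter. The discrete Littlewood--Paley characterization of Theorem~\ref{teo-equiv2} is the bridge, and the reproducing formula~\eqref{LPD} provides the reconstruction.

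First I would fix an integer $m>\max(\alpha_0,\alpha_1)/2$ (permissible by Theorem~\ref{teo-equiv1}) and, for each admissible triple $(p,q,\alpha)$, introduce the target sequence spaces
\[
\ell^{q,\alpha}(L^p(\mu_\chi)) := L^p(\mu_\chi)\oplus \ell^q\big(\mathbb{N};L^p(\mu_\chi);2^{jq\alpha/2}\big),
\qquad
L^p_{\alpha}(\ell^q,\mu_\chi) := L^p(\mu_\chi)\oplus L^p(\mu_\chi;\ell^q_{\alpha/2}).
\]
Define a coretraction, the same for every $(p,q,\alpha)$, by
\[
S f := \big(e^{-\frac{1}{2}\Delta_\chi}f,\, W^{(m)}_1 f,\, W^{(m)}_{2^{-1}}f,\, W^{(m)}_{2^{-2}}f,\, \ldots\big).
\]
Theorem~\ref{teo-equiv2} shows that $S: B^{p,q}_\alpha(\mu_\chi)\to \ell^{q,\alpha}(L^p(\mu_\chi))$ and $S:F^{p,q}_\alpha(\mu_\chi)\to L^p_{\alpha}(\ell^q,\mu_\chi)$ are bounded, with operator norms depending only on the parameters.

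Next I build a retraction $R$ from~\eqref{LPD}. Choosing an integer $M>2\max(\alpha_0,\alpha_1)$ and setting
\[
R\big(g_{-1},(g_j)_{j\geq 0}\big) := P(\Delta_\chi)e^{-\frac{1}{2}\Delta_\chi}g_{-1} + \sum_{j\geq 0} c_{M,j}\, W^{(M-m)}_{2^{-j}} g_j
\]
for a suitable polynomial $P$ and constants $c_{M,j}$, the identity $R\circ S = \mathrm{Id}$ reduces to~\eqref{LPD} applied to the components of $Sf$: the sum reconstructs the high-frequency part $\int_0^1 W^{(M)}_t f\,\dd t/t$ (up to the bounded error of discretizing this integral, which is an element of the range of $e^{-\frac{1}{2}\Delta_\chi}$ and can therefore be absorbed into $P(\Delta_\chi)e^{-\frac{1}{2}\Delta_\chi}g_{-1}$ by choosing $P$ appropriately). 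The boundedness of $R$ into the function spaces is then an almost-orthogonality estimate in the spirit of the proof of Theorem~\ref{teo-equiv3}: applying $W^{(m')}_{2^{-k}}$ to the $j$-th summand produces a factor $2^{-M|j-k|}$ combined with a heat-smoothing factor of type $e^{-c(2^{-j}+2^{-k})\Delta_\chi}|g_j|$, and summing in $j$ via Lemma~\ref{lemmasomme} together with Proposition~\ref{contCRTN} yields the required bound in both the Besov and the Triebel--Lizorkin norm.

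With $S$ and $R$ in hand, the classical retraction principle for the complex method (see e.g.~\cite[\S1.2.4]{TriebelTFS}) identifies $(B^{p_0,q_0}_{\alpha_0},B^{p_1,q_1}_{\alpha_1})_{[\theta]}$ with the image under $R$ of $\big(\ell^{q_0,\alpha_0}(L^{p_0}(\mu_\chi)),\,\ell^{q_1,\alpha_1}(L^{p_1}(\mu_\chi))\big)_{[\theta]}$, and similarly for the Triebel--Lizorkin spaces. The complex interpolation of the sequence spaces
\[
\big(\ell^{q_0,\alpha_0}(L^{p_0}(\mu_\chi)),\,\ell^{q_1,\alpha_1}(L^{p_1}(\mu_\chi))\big)_{[\theta]} = \ell^{q_\theta,\alpha_\theta}(L^{p_\theta}(\mu_\chi)),
\]
\[
\big(L^{p_0}_{\alpha_0}(\ell^{q_0},\mu_\chi),\,L^{p_1}_{\alpha_1}(\ell^{q_1},\mu_\chi)\big)_{[\theta]} = L^{p_\theta}_{\alpha_\theta}(\ell^{q_\theta},\mu_\chi)
\]
is a standard consequence of~\cite[Theorem 5.1.2]{BerghLofstrom} together with the elementary observation that the geometric weights $2^{j\alpha_i/2}$ interpolate to $2^{j\alpha_\theta/2}$ (via the analytic family $F(z)=2^{j((1-z)\alpha_0+z\alpha_1)/2}$ and inequality~\eqref{interpinequality}).

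The step I expect to be the main obstacle is the construction of the retraction $R$ with $R\circ S=\mathrm{Id}$: a continuous version of the identity follows cleanly from~\eqref{LPD}, but the discretization produces an additive error that must be absorbed into the low-frequency slot. A clean way to implement this is to enlarge the sequence space by a finite number of auxiliary low-frequency components $W^{(k)}_1 f$ for $0\leq k\leq m-1$, which appear explicitly on the right-hand side of~\eqref{LPD}; they contribute only a finite-dimensional correction that is harmless under interpolation. Once this is carried out, the estimates reduce entirely to manipulations already developed in Section~\ref{sec:HS} and in the equivalent-norm arguments of Section~\ref{sec:EN}.
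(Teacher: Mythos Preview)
Your overall strategy---realizing the spaces as retracts of weighted $L^p(\ell^q)$-type sequence spaces and then invoking complex interpolation of the latter---is exactly the paper's approach, and your coretraction $S$ essentially matches the paper's $\mathcal J$.

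The gap is precisely where you yourself flag it: your proposed retraction does not satisfy $R\circ S=\mathrm{Id}$. A sum of the form $\sum_j c_{M,j}\,W^{(M-m)}_{2^{-j}}\big(W^{(m)}_{2^{-j}}f\big)$ is only a Riemann approximation to the integral in~\eqref{LPD}, and the resulting error is \emph{not} of the form $P(\Delta_\chi)e^{-\frac12\Delta_\chi}$ applied to the single low-frequency component $g_{-1}$: the discrepancy on the dyadic block $[2^{-j},2^{-j+1}]$ carries only smoothing at scale $2^{-j}$, not at scale $1/2$, so it cannot be absorbed by any fixed polynomial $P$. Enlarging $S$ by the finitely many components $W^{(k)}_1f$ recovers the low-frequency tail of~\eqref{LPD} exactly, but does nothing to repair the discretization of the integral.

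The paper resolves this by making the retraction itself carry the integral. With $(\mathcal Jf)_0=e^{-\frac12\Delta_\chi}f$ and $(\mathcal Jf)_j=2^m W^{(m)}_{2^{-j-1}}f$ for $j\ge1$, it sets
\[
\mathcal P u=\sum_{k=0}^{2m-1}\frac{1}{k!}\Delta_\chi^k e^{-\frac12\Delta_\chi}u_0+\frac{1}{(2m-1)!}\sum_{j\ge1}2^{jm}\int_{2^{-j}}^{2^{-j+1}} t^{2m}\Delta_\chi^m e^{-(t-2^{-j-1})\Delta_\chi}u_j\,\frac{\dd t}{t}.
\]
Substituting $u_j=(\mathcal Jf)_j$ collapses the $j$-th summand to $\int_{2^{-j}}^{2^{-j+1}}W^{(2m)}_t f\,\dd t/t$, and the sum over $j$ reconstitutes the full continuous integral $\int_0^1 W^{(2m)}_t f\,\dd t/t$ with no remainder; together with the low-frequency piece this is exactly~\eqref{LPD} at level $2m$, so $\mathcal P\circ\mathcal J=\mathrm{Id}$ identically. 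The boundedness of $\mathcal P$ then follows from the heat-kernel estimates and the Schur/Proposition~\ref{contCRTN} machinery just as you outline.
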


\begin{proof}
The proof is inspired to~\cite[Theorem 6.4.3]{BerghLofstrom}. We prove only (ii), for the proof of (i) follows the same steps and is easier  in some respects. To prove~(i), one may also adapt the proof of~\cite[Corollary 4.7]{Feneuil}.

To prove (ii), it is enough to prove that the spaces $F^{p,q}_\alpha(\mu_\chi)$ are retracts of 
\[
L^p(\ell_\alpha^q,\mu_\chi) = \Bigg\{ u=(u_j)_{j\in \N} \colon  \|u\|_{L^p(\ell_\alpha^q, \mu_\chi)}=  \Bigg\| \left( \sum_{j=0}^\infty \left(2^{j\frac{\alpha}{2}} |u_j|\right)^q \right)^{1/q}\Bigg\|_{L^p(\mu_\chi)}<\infty \Bigg\},
\]
with the obvious modification when $q=\infty$. The result will then follow by~\cite[Theorem 6.4.2]{BerghLofstrom} and the complex interpolation properties of the spaces $L^p(\ell_\alpha^q,\mu_\chi)$ (see~\cite[Theorem p.128]{TriebelInterp}). We recall that a space $Y$ is called a retract of $X$ if there exist two bounded linear operators $\mathcal J\colon Y\to X$ and $\mathcal P\colon X\to Y$ such that $\mathcal P \circ \mathcal J$ is the identity on $Y$ (see~\cite[Definition 6.4.1]{BerghLofstrom}).

Let $m= [\frac{\alpha}2]+1$. Define the functional $\mathcal J$  on $F^{p,q}_\alpha(\mu_\chi)$ by $\mathcal J f = \left((\mathcal J f)_j\right)_{j\in \N}$ where
\[
(\mathcal J f)_0 = \e^{-\frac12\Delta_\chi} f, \qquad (\mathcal J f)_j = 2^{m}W^{(m)}_{2^{-j-1}}  f\quad \mbox{ if } j\geq 1,
\]
and $\mathcal P$ on $L^p(\ell_\alpha^q, \mu_\chi)$ by
\begin{align*}
\mathcal P u 
&= \sum_{k=0}^{2m-1} \frac{1}{k!} \Delta_\chi^k \e^{-\frac12\Delta_\chi} u_0 + \frac{1}{(2m-1)!}\sum_{j= 1}^\infty 2^{jm} \int_{2^{-j}}^{2^{-j+1}} t^{2m}\Delta_\chi^m \e^{-(t-2^{-j-1})\Delta_\chi} u_j \, \frac{\dd t}{t} \\
&\eqqcolon \mathcal P_1 u + \mathcal P_2 u. 
\end{align*}
By~\eqref{LPD}, $\mathcal P \circ \mathcal J  = \mathrm{Id}_{F^{p,q}_\alpha}$. Moreover, $\mathcal J$ is bounded from $F^{p,q}_\alpha(\mu_\chi)$ to $L^p(\ell_\alpha^q,\mu_\chi)$ by Theorem~\ref{teo-equiv2}. Thus, it remains to prove that $\mathcal P$ is bounded from $L^p(\ell_\alpha^q,\mu_\chi)$ to $F^{p,q}_\alpha(\mu_\chi)$. 

We assume $q<\infty$. By Lemma~\ref{pointwiseestheat2}, one gets 
\begin{align*}
\| \mathcal{P}_1 u \|_{F^{p,q}_\alpha}
&\lesssim \sum_{k=0}^{2m-1} \Bigg\| \left( \int_0^1 \left( t^{-\frac{\alpha}{2}} | W^{(m)}_{t} W^{(k)}_{1/2}    u_0| \right)^q \, \frac{\dd t}{t} \right)^{1/q} \Bigg\|_{L^p}  + \| \e^{-\frac{1}{2}\Delta_\chi} \mathcal{P}_1 u\|_{L^p} \\
&\lesssim \|u_0\|_{L^p}\\
& \lesssim \| u\|_{L^p(\ell_\alpha^q)}.
\end{align*}
By Lemma~\ref{pointwiseestheat2}, Proposition~\ref{contCRTN} and H\"older's inequality we have
\begin{align*}
 \|\e^{-\frac{1}{2}\Delta_\chi} \mathcal P_2u \|_{L^p} &
  \lesssim \left\| \sum_{j=1}^\infty 2^{jm}  \int_{2^{-j}}^{2^{-j+1}} t^{2m} |\Delta_\chi^m \e^{-(t-2^{-j-1})\Delta_\chi}\e^{-\frac12\Delta_\chi}  u_j| \, \frac{\dd t}{t} \right\|_{L^p}\\
  &\lesssim \left\| \sum_{j=1}^\infty 2^{-jm}  \e^{-c\Delta_\chi}| u_j| \right\|_{L^p}\\
  &\lesssim \left\| \sum_{j=1}^\infty 2^{-jm} | u_j| \right\|_{L^p}\\
  & \lesssim\|u\|_{L^p(\ell_\alpha^q)}.
\end{align*} 
Now, we use~\eqref{upsilondiscreto}, Lemma~\ref{pointwiseestheat2} and Proposition~\ref{contCRTN}, which yield
\begin{align*}
\mathscr{F}^{p,q}_\alpha(\mathcal{P}_2u) 
%&\lesssim \Bigg\| \left( \sum_{k=1}^\infty \left( 2^{-k(m-\frac{\alpha}{2})} \left|\sum_{j=1}^\infty 2^{jm} \int_{2^{-j}}^{2^{-j+1}} (t\Delta_\chi)^{2m} \e^{-(t-2^{-j-1}+2^{-k})\Delta_\chi} u_j \, \frac{\dd t}{t} \right| \right)^q \right)^{1/q} \Bigg\|_{L^{p}}    \\
&\lesssim \Bigg\| \left( \sum_{k=1}^\infty \left( 2^{-k(m-\frac{\alpha}{2})} \sum_{j=1}^\infty 2^{jm} \int_{2^{-j}}^{2^{-j+1}} t^{2m}|\Delta_\chi^{2m} \e^{-(t-2^{-j-1}+2^{-k})\Delta_\chi} u_j |\, \frac{\dd t}{t} \right)^q \right)^{1/q}  \Bigg\|_{L^{p}}   \\
&\lesssim  \Bigg\| \left( \sum_{k=1}^\infty \left( 2^{-k(m-\frac{\alpha}{2})} \sum_{j=1}^\infty 2^{jm} \int_{2^{-j}}^{2^{-j+1}}  \frac{t^{2m}}{(2^{-j}+2^{-k})^{2m}} \e^{-c(2^{-j}+2^{-k})\Delta_\chi}| u_j |\, \frac{\dd t}{t} \right)^q \right)^{1/q} \Bigg\|_{L^{p}}   \\
&\lesssim  \Bigg\| \left( \sum_{k=1}^\infty \left( 2^{-k(m-\frac{\alpha}{2})} \sum_{j=1}^\infty    \frac{2^{-jm}}{(2^{-j}+2^{-k})^{2m}} \e^{-c2^{-j}\Delta_\chi}| u_j | \right)^q \right)^{1/q}  \Bigg\|_{L^{p}} ,
\end{align*}
hence by Lemma~\ref{lemmasomme}
\begin{align*}
\mathscr{F}^{p,q}_\alpha(\mathcal{P}_2u)  &\lesssim  \Bigg\| \left(\sum_{j\geq 1} (2^{j\frac{\alpha}{2}} \e^{-c2^{-j}\Delta_\chi}| u_j |)^q \right)^{1/q}  \Bigg\|_{L^{p}} \lesssim \Bigg\| \left(\sum_{j\geq 1} (2^{j\frac{\alpha}{2}} | u_j |)^q\right)^{1/q}  \Bigg\|_{L^{p}} = \| u\|_{L^p(\ell_\alpha^q, \mu_\chi)}.
\end{align*}
Leaving the case $q=\infty$ to the reader, this concludes the proof.
\end{proof}

\section{Algebra Properties}\label{sec:AP}
In this final section we establish algebra properties of Besov and Triebel--Lizorkin spaces. In particular, we prove the following.
\begin{theorem} \label{teo_algebra}
Let  $\alpha>0$ and $p,p_1,p_2,p_3,p_4,q\in [1,\infty]$ such that
\[\frac{1}{p_1} + \frac{1}{p_2} = \frac{1}{p_3} + \frac{1}{p_4} = \frac{1}{p}.\]
\begin{itemize}
\item[(i)] If $f\in B^{p_1,q}_\alpha(\mu_\chi) \cap L^{p_3}(\mu_\chi)$ and $g \in B^{p_4,q}_\alpha(\mu_\chi) \cap L^{p_2}(\mu_\chi)$, then
\begin{equation}\label{Besov_product}
\|fg\|_{B^{p,q}_\alpha(\mu_\chi)} \lesssim \|f\|_{B^{p_1,q}_\alpha(\mu_\chi)}\|g\|_{L^{p_2}(\mu_\chi)} + \|f\|_{L^{p_3}(\mu_\chi)} \|g\|_{B^{p_4,q}_\alpha(\mu_\chi)}.
\end{equation}
In particular, $B^{p,q}_\alpha(\mu_\chi) \cap L^\infty$ is an algebra under pointwise multiplication.
\item[(ii)] Let $p,p_1,p_4,\in (1,\infty)$ and $p_2,p_3 \in (1,\infty]$. If $f\in F^{p_1,q}_\alpha(\mu_\chi) \cap L^{p_3}(\mu_\chi)$ and $g \in F^{p_4,q}_\alpha(\mu_\chi) \cap L^{p_2}(\mu_\chi)$, then
\begin{equation}\label{TL_product}
\|fg\|_{F^{p,q}_\alpha(\mu_\chi)} \lesssim \|f\|_{F^{p_1,q}_\alpha(\mu_\chi)}\|g\|_{L^{p_2}(\mu_\chi)} + \|f\|_{L^{p_3}(\mu_\chi)} \|g\|_{F^{p_4,q}_\alpha(\mu_\chi)}.
\end{equation}
In particular, $F^{p,q}_\alpha(\mu_\chi) \cap L^\infty$ is an algebra under pointwise multiplication.
\end{itemize}
\end{theorem}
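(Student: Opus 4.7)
The plan is to use a heat-semigroup paraproduct. The $L^p$-piece of the norm is easily handled: $\|\e^{-\Delta_\chi/2}(fg)\|_{L^p(\mu_\chi)}\lesssim\|f\|_{L^{p_1}}\|g\|_{L^{p_2}}$ by H\"older, which by~\eqref{embedLp} is controlled by $\|f\|_{B^{p_1,q}_\alpha(\mu_\chi)}\|g\|_{L^{p_2}}$ (resp.\ $\|f\|_{F^{p_1,q}_\alpha(\mu_\chi)}\|g\|_{L^{p_2}}$). So the main task is to estimate the seminorms $\mathscr{B}^{p,q}_\alpha(fg)$ and $\mathscr{F}^{p,q}_\alpha(fg)$.

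Starting from the Leibniz formula $\Delta_\chi(uv)=(\Delta_\chi u)v+u(\Delta_\chi v)-2\sum_j(X_ju)(X_jv)$, a direct computation gives $\frac{d}{ds}[\e^{-s\Delta_\chi}(\e^{-s\Delta_\chi}f\cdot\e^{-s\Delta_\chi}g)]=2\sum_j\e^{-s\Delta_\chi}[(X_j\e^{-s\Delta_\chi}f)(X_j\e^{-s\Delta_\chi}g)]$, which integrated on $(0,1)$ yields the paraproduct identity
$$fg=\e^{-\Delta_\chi}(\e^{-\Delta_\chi}f\cdot\e^{-\Delta_\chi}g)-2\sum_j\int_0^1\e^{-s\Delta_\chi}\bigl[(X_j\e^{-s\Delta_\chi}f)(X_j\e^{-s\Delta_\chi}g)\bigr]\,ds.$$
With $m=[\alpha/2]+1>\alpha/2$, the first summand is smoothing: Lemma~\ref{pointwiseestheat2} gives $|W^{(m)}_t[\cdots]|\lesssim t^m\,\e^{-c\Delta_\chi}(|\e^{-\Delta_\chi}f|\cdot|\e^{-\Delta_\chi}g|)$, and the factor $t^{m-\alpha/2}$ integrates at $t=0$. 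Its contribution is then bounded by $\|f\|_{L^{p_1}}\|g\|_{L^{p_2}}$ via H\"older's inequality.

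For the main bilinear summand, setting $\Phi^{(j)}_s:=(X_j\e^{-s\Delta_\chi}f)(X_j\e^{-s\Delta_\chi}g)$, Lemma~\ref{pointwiseestheat2} yields $|W^{(m)}_t\e^{-s\Delta_\chi}\Phi^{(j)}_s|\lesssim\frac{t^m}{(t+s)^m}\,\e^{-c(t+s)\Delta_\chi}|\Phi^{(j)}_s|$. Substituting into $\mathscr{F}^{p,q}_\alpha$, factoring $\e^{-c(t+s)\Delta_\chi}=\e^{-ct\Delta_\chi}\e^{-cs\Delta_\chi}$ and invoking Proposition~\ref{CRTNintegrale}(iii) to dispose of the outer $\e^{-ct\Delta_\chi}$, I would then absorb the kernel $t^{m-\alpha/2}/(t+s)^m$ against the $t$-integration via a Schur bound in the spirit of Lemma~\ref{lemmasomme}(ii). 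This reduces matters to estimating
$$\left\|\left(\int_0^1\bigl(s^{1-\alpha/2}\,\e^{-cs\Delta_\chi}|\Phi^{(j)}_s|\bigr)^q\frac{ds}{s}\right)^{1/q}\right\|_{L^p(\mu_\chi)}.$$
A variant of Lemma~\ref{bilinearCRTNint}, obtained by substituting $X_j\e^{-s\Delta_\chi}$ for $\e^{-s\Delta_\chi}$ throughout its proof (using Lemma~\ref{pointwiseestheat2}(ii) in place of~(i)), then separates the outer heat semigroup over the product and leaves the pointwise product $\e^{-c's\Delta_\chi}|X_j\e^{-s\Delta_\chi}f|\cdot\e^{-c's\Delta_\chi}|X_j\e^{-s\Delta_\chi}g|$ inside the integral.

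To conclude, I split $s^{1-\alpha/2}=s^{1/2}\cdot s^{1/2-\alpha/2}$: the factor $s^{1/2}$ paired with $g$ yields $s^{1/2}|X_j\e^{-s\Delta_\chi}g|\lesssim\e^{-a_1 s\Delta_\chi}|g|$ by Lemma~\ref{pointwiseestheat2}(ii), whose supremum in $s\in(0,1)$ is dominated by the heat maximal function of $g$, with $L^{p_2}(\mu_\chi)$-norm $\lesssim\|g\|_{L^{p_2}}$. The factor $s^{1/2-\alpha/2}$ paired with $f$ yields an integral that, via Proposition~\ref{contCRTN} together with Theorems~\ref{teo-equiv2}--\ref{teo-equiv3} (recognising, after discretisation $s\approx 2^{-j}$, a norm equivalent to $\|f\|_{F^{p_1,q}_\alpha(\mu_\chi)}$ through the vector-field characterisation with $|J|=1$), is controlled by $\|f\|_{F^{p_1,q}_\alpha}$. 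A mixed-norm H\"older inequality of type $L^{p_1}(L^q)\cdot L^{p_2}(L^\infty)\hookrightarrow L^p(L^q)$ then delivers $\lesssim\|f\|_{F^{p_1,q}_\alpha}\|g\|_{L^{p_2}}$, and the symmetric assignment of weights produces the $\|f\|_{L^{p_3}}\|g\|_{F^{p_4,q}_\alpha}$-summand. The Besov case~(i) follows the same scheme, working with $L^p$-norms in place of the mixed norms and using Lemma~\ref{pointwiseestheat3} in lieu of Proposition~\ref{contCRTN}. The principal obstacle throughout is the simultaneous matching of the three independent scales at play---the outer $t$ from $W^{(m)}_t$, the paraproduct variable $s$, and the intrinsic frequency scale at which $f,g$ are probed by $X_j\e^{-s\Delta_\chi}$---resolved via the intertwined use of Propositions~\ref{contCRTN}--\ref{CRTNintegrale} and Lemmas~\ref{bilinearCRTNint},~\ref{lemmasomme}, together with a careful bookkeeping that ensures exactly one Besov or Triebel--Lizorkin norm and one Lebesgue norm are recovered on each side of the split.
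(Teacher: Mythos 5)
Two genuine gaps make your proposal fail, the second of which is the more fundamental.

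First, your paraproduct identity is incorrect. Differentiating $h(s)=\e^{-s\Delta_\chi}(\e^{-s\Delta_\chi}f\cdot\e^{-s\Delta_\chi}g)$ and using the Leibniz rule you wrote gives
\begin{equation*}
\frac{\dd h}{\dd s}=\e^{-s\Delta_\chi}\Big[-2(\Delta_\chi\e^{-s\Delta_\chi}f)\e^{-s\Delta_\chi}g-2\,\e^{-s\Delta_\chi}f\,(\Delta_\chi\e^{-s\Delta_\chi}g)+2\sum_j(X_j\e^{-s\Delta_\chi}f)(X_j\e^{-s\Delta_\chi}g)\Big],
\end{equation*}
so the two ``high--low'' and ``low--high'' pieces survive; you have dropped exactly the analogues of $\Pi_f(g)$ and $\Pi_g(f)$ from Proposition~\ref{paraproduct}. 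These terms are the easier ones to handle, so the omission is not what breaks the argument, but it means the identity you start from is simply false, and the decomposition you do obtain is the $m=1$ case of Proposition~\ref{paraproduct} (which is only adequate when $[\alpha/2]+1=1$, i.e.\ $\alpha<2$).

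Second, and decisively, the estimate you propose for the bilinear piece cannot hold for $\alpha\geq 1$. You reduce to bounding $\big\|\big(\int_0^1(s^{1/2-\alpha/2}|X_j\e^{-s\Delta_\chi}f|)^q\,\frac{\dd s}{s}\big)^{1/q}\big\|_{L^{p_1}}$ by $\|f\|_{F^{p_1,q}_\alpha(\mu_\chi)}$ via Theorem~\ref{teo-equiv3} ``with $|J|=1$''. But the vector-field characterization gives $2^{j\alpha/2}W^{(m),*}_{2^{-j}}f\gtrsim 2^{j(\alpha/2-m)}|X_j\e^{-2^{-j}\Delta_\chi}f|$ with $m=[\alpha/2]+1\geq 1$, whereas after discretization $s\approx 2^{-j}$ your quantity carries the weight $2^{j(\alpha/2-1/2)}$, larger by a factor $2^{j(m-1/2)}\to\infty$. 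Indeed the claim fails outright: take $f\in C_c^\infty(G)$, so that $|X_j\e^{-s\Delta_\chi}f|$ stays bounded away from $0$ on a fixed set as $s\to 0$; then $\int_0^1(s^{1/2-\alpha/2})^q\,\dd s/s=\infty$ whenever $\alpha\geq 1$, while $\|f\|_{F^{p_1,q}_\alpha}<\infty$ for every $\alpha$. In other words, $s^{1/2}X_j\e^{-s\Delta_\chi}$ only supplies ``half a derivative'' of smoothing, so loading all of $s^{-\alpha/2}$ onto $f$ asks for $\alpha$ derivatives from an operator that can absorb at most one. The paper avoids precisely this obstruction: by expanding $(t\Delta_\chi)^m[W^{(h)}_tf\cdot W^{(k)}_tg]$ by Leibniz into $2m+1$ terms $Y_L\e^{-t\Delta_\chi}f\cdot Z_J\e^{-t\Delta_\chi}g$ with $|L|+|J|=2(m+h+k)$, it distributes the regularity unevenly, then matches each term against a pair of intermediate Triebel--Lizorkin norms $\|f\|_{F^{r_1,q_1}_{\alpha_1}}\|g\|_{F^{r_2,q_2}_{\alpha_2}}$ and deploys the complex interpolation of Theorem~\ref{interpolation} together with~\eqref{interpinequality} to reach $\|f\|_{L^{p_3}}\|g\|_{F^{p_4,q}_\alpha}+\|f\|_{F^{p_1,q}_\alpha}\|g\|_{L^{p_2}}$. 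That interpolation step is the essential mechanism; without it (and without a higher-order paraproduct to create the room for it) the argument cannot go past $\alpha<1$.
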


By Theorems~\ref{teo_embeddings}~(iii) and~\ref{teo_embeddings_TL}~(iv) we obtain the following corollary.
\begin{corollary} Let $q\in [1,\infty]$.
\begin{itemize} 
\item[(i)] If $p \in [1,\infty]$ and $\alpha>d/p$, then $B^{p,1}_{d/p}(\lambda)$ and $B^{p,q}_\alpha(\lambda)$ are algebras under pointwise multiplication.
\item[(ii)] If $p\in (1,\infty)$ and $\alpha>d/p$, then $F^{p,q}_\alpha(\lambda)$ is an algebra under pointwise multiplication.
\end{itemize}
\end{corollary}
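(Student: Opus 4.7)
The plan is to deduce each algebra property as a direct specialization of Theorem~\ref{teo_algebra} where one of the $L^p$-factors is taken in $L^\infty$, combined with a Sobolev-type embedding into $L^\infty$ that makes the $L^\infty$-norm controlled by the Besov/Triebel--Lizorkin norm itself.

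For part~(i), I would apply Theorem~\ref{teo_algebra}~(i) with $\chi=\delta$ and with the choice of exponents $p_1=p_4=p$ and $p_2=p_3=\infty$, so that $\frac{1}{p_1}+\frac{1}{p_2}=\frac{1}{p_3}+\frac{1}{p_4}=\frac{1}{p}$ is satisfied. This yields, for $f,g$ in the relevant space,
\[
\|fg\|_{B^{p,q}_\alpha(\lambda)} \lesssim \|f\|_{B^{p,q}_\alpha(\lambda)}\|g\|_{L^\infty} + \|f\|_{L^\infty}\|g\|_{B^{p,q}_\alpha(\lambda)},
\]
and the analogous inequality with $(q,\alpha)$ replaced by $(1,d/p)$. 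Now if $\alpha>d/p$, Theorem~\ref{teo_embeddings}~(iii) gives $B^{p,q}_\alpha(\lambda)\hookrightarrow L^\infty$, so both $L^\infty$-factors on the right-hand side are controlled by the $B^{p,q}_\alpha(\lambda)$-norm, and we conclude that $B^{p,q}_\alpha(\lambda)$ is an algebra. The same argument with the second embedding in Theorem~\ref{teo_embeddings}~(iii), namely $B^{p,1}_{d/p}(\lambda)\hookrightarrow L^\infty$, handles the endpoint space $B^{p,1}_{d/p}(\lambda)$.

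For part~(ii), the same scheme applies: using Theorem~\ref{teo_algebra}~(ii) with $\chi=\delta$, $p_1=p_4=p\in (1,\infty)$ and $p_2=p_3=\infty$ gives
\[
\|fg\|_{F^{p,q}_\alpha(\lambda)} \lesssim \|f\|_{F^{p,q}_\alpha(\lambda)}\|g\|_{L^\infty} + \|f\|_{L^\infty}\|g\|_{F^{p,q}_\alpha(\lambda)},
\]
and Theorem~\ref{teo_embeddings_TL}~(iv) provides $F^{p,q}_\alpha(\lambda)\hookrightarrow L^\infty$ whenever $\alpha>d/p$. Absorbing the $L^\infty$ factors into the Triebel--Lizorkin norms closes the argument. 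There is no genuine obstacle here, since all the difficulty is already packaged into Theorem~\ref{teo_algebra} and the two embedding statements; the only point to verify is the admissibility of the exponent choice $p_2=p_3=\infty$ in Theorem~\ref{teo_algebra}, which is allowed by its hypotheses ($p_2,p_3\in[1,\infty]$ in~(i), $p_2,p_3\in(1,\infty]$ in~(ii)).
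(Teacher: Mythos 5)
Your proposal is correct and is exactly the argument the paper intends: the corollary is stated as an immediate consequence of Theorem~\ref{teo_algebra} with $p_1=p_4=p$, $p_2=p_3=\infty$ combined with the embeddings of Theorems~\ref{teo_embeddings}~(iii) and~\ref{teo_embeddings_TL}~(iv) into $L^\infty$. Your check that the exponent choice $p_2=p_3=\infty$ is admissible under the hypotheses of both parts of Theorem~\ref{teo_algebra} is the only point requiring verification, and you handled it correctly.
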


To prove Theorem~\ref{Besov_product}, we shall use paraproducts, see~\cite{BBR, Feneuil}. The following proposition is essentially~\cite[Proposition 5.2]{Feneuil}, and its proof is exactly the same.

\begin{proposition}\label{paraproduct}
Let $p,q\in [1,\infty]$ such that $\frac{1}{p} + \frac{1}{q} \leq 1$. If $f \in L^p(\mu_\chi)$ and $ g\in L^q(\mu_\chi)$, then 
\[
fg = \Pi_f(g) + \Pi_g(f) + \Pi(f,g) + \sum_{h,k,n=0}^{m-1} \frac{1}{h!k!n!} W^{(h)}_1 [ W^{(k)}_1 f\cdot W^{(n)}_1 g] 
\]
in $\mathcal{S}'(G)$, where
\begin{equation}\label{Pi_fg}
\Pi_f(g) = \sum_{h,k=0}^{m-1} \frac{1}{(m-1)! h! k!} \int_0^1 W^{(h)}_t [W^{(m)}_t  f \cdot W^{(k)}_t g] \, \frac{\dd t}{t},
\end{equation}
and
\begin{equation}\label{Pifg}
\Pi(f,g) = \sum_{h,k=0}^{m-1} \frac{1}{(m-1)! h! k!} \int_0^1 W^{(m)}_t  [W^{(h)}_t  f \cdot W^{(k)}_t g] \, \frac{\dd t}{t}.
\end{equation}
\end{proposition}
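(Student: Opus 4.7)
The strategy follows the paraproduct approach of \cite[Proposition 5.2]{Feneuil}: derive a Leibniz-type identity for the product of the ``low-frequency'' cutoffs $S_t f \cdot S_t g$, integrate, then re-decompose each resulting piece via the reproducing formula \eqref{LPD} at appropriate scales, and rearrange via Fubini.

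First, introduce $S_t := \sum_{k=0}^{m-1} \frac{1}{k!} W_t^{(k)}$. A direct telescoping using $\partial_t W_t^{(k)} = \frac{k}{t}W_t^{(k)} - \frac{1}{t}W_t^{(k+1)}$ yields the key identity
\[
\partial_t S_t F = -\frac{1}{(m-1)!\, t}\, W_t^{(m)} F, \qquad F \in \mathcal{S}'(G),
\]
which upon integration gives the scale-$t$ version of \eqref{LPD}, namely $F = S_t F + \frac{1}{(m-1)!}\int_0^t W_s^{(m)} F\,\frac{\dd s}{s}$ for $t \in (0,1]$. Applying the product rule to $t \mapsto S_t f \cdot S_t g$ and integrating from $0$ to $1$, using that $S_t f \cdot S_t g \to fg$ as $t \to 0^+$ in $\mathcal{S}'(G)$ (equivalently in $L^r(\mu_\chi)$ with $1/r = 1/p + 1/q$, via H\"older's inequality and the strong $L^p$-continuity of the heat semigroup plus $\|W_t^{(k)} F\|_{L^p} \to 0$ for $k \geq 1$), one obtains the preliminary identity
\[
fg = S_1 f \cdot S_1 g + \frac{1}{(m-1)!}\int_0^1 \bigl[W_t^{(m)} f \cdot S_t g + S_t f \cdot W_t^{(m)} g\bigr]\,\frac{\dd t}{t}.
\]

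Next, apply the scale-$t$ reproducing formula above to each product inside the integral, and apply \eqref{LPD} (scale $1$) to $S_1 f \cdot S_1 g$. Expanding $S_t = \sum_h W_t^{(h)}/h!$ in the resulting ``$S_t[\,\cdot\,]$'' pieces reproduces exactly the defining expressions~\eqref{Pi_fg} of $\Pi_f(g)$ and $\Pi_g(f)$, while expanding $S_1 f$ and $S_1 g$ in the residual $\sum_h W_1^{(h)}[S_1 f \cdot S_1 g]/h!$ produces the stated sum $\sum_{h,k,n} W_1^{(h)}[W_1^{(k)} f \cdot W_1^{(n)} g]/(h!k!n!)$. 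The remaining contributions from these decompositions form a triplet of $W_s^{(m)}[\,\cdot\,]$-type integrals.

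Finally, collapse this triplet into $\Pi(f,g)$ using Fubini together with the scale-$s$ analogue of the preliminary identity,
\[
S_s f \cdot S_s g - S_1 f \cdot S_1 g = \frac{1}{(m-1)!}\int_s^1 \bigl[W_t^{(m)} f \cdot S_t g + S_t f \cdot W_t^{(m)} g\bigr]\,\frac{\dd t}{t}.
\]
Interchanging $\dd s\,\dd t$ in the double integral arising from the cross terms converts it into $\frac{1}{(m-1)!}\int_0^1 W_s^{(m)}[S_s f \cdot S_s g - S_1 f \cdot S_1 g]\,\frac{\dd s}{s}$, whose $S_1 f \cdot S_1 g$-part cancels exactly the single integral produced by decomposing $S_1 f \cdot S_1 g$, leaving $\frac{1}{(m-1)!}\int_0^1 W_s^{(m)}[S_s f \cdot S_s g]\,\frac{\dd s}{s}$, which after expansion of the $S_s$-factors coincides with $\Pi(f,g)$ defined in~\eqref{Pifg}. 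The main technical obstacle is justifying every step (the limit $t \to 0^+$, differentiation under the integral, Fubini, and the absolute convergence of the various $\dd t/t$ integrals) as identities in $\mathcal{S}'(G)$; this is carried out by combining H\"older's inequality with exponent $r$ and the $L^p$-$L^q$ heat semigroup estimates of Section~\ref{sec:HS}, in particular Lemma~\ref{pointwiseestheat3} and the kernel decay of Lemma~\ref{estimates_ptgamma}.
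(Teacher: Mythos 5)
Your proof is correct and is essentially the argument the paper delegates to Feneuil's Proposition~5.2: smooth the product with $S_t=\sum_{k=0}^{m-1}\frac{1}{k!}W_t^{(k)}$, use $\partial_t S_t=-\frac{1}{(m-1)!\,t}W_t^{(m)}$, and integrate from $0$ to $1$. The only remark worth making is that your two-stage decomposition followed by Fubini can be shortcut by applying the fundamental theorem of calculus directly to $t\mapsto S_t[S_tf\cdot S_tg]$, whose $t$-derivative already yields the three paraproduct integrands $W_t^{(m)}[S_tf\cdot S_tg]+S_t[W_t^{(m)}f\cdot S_tg]+S_t[S_tf\cdot W_t^{(m)}g]$ in one step.
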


We are now ready to prove Theorem~\ref{teo_algebra}.
\begin{proof}[Proof of Theorem~\ref{teo_algebra}]
We prove only (ii), for the proof of (i) follows the same steps. See also the proof of~\cite[Proposition 5.3]{Feneuil}.

We claim that
\begin{equation}\label{eqalg1TL}
 \mathscr{F}^{p,q}_{\alpha}(\Pi_f(g)) \lesssim \|f\|_{F^{p_1,q}_\alpha}\|g\|_{L^{p_2}},
\end{equation}
that
\begin{equation}\label{eqalg3TL}
 \mathscr{F}^{p,q}_{\alpha}(\Pi_g(f)) \lesssim   \|f\|_{L^{p_3}}   \|g\|_{F^{p_4,q}_\alpha},
\end{equation}
and that
\begin{equation}\label{eqalg2TL}
 \mathscr{F}^{p,q}_{\alpha}(\Pi(f,g)) \lesssim \|f\|_{F^{p_1,q}_\alpha}\|g\|_{L^{p_2}} + \|f\|_{L^{p_3}} \|g\|_{F^{p_4,q}_\alpha}.
\end{equation}
Postponing the proof of~\eqref{eqalg1TL},~\eqref{eqalg3TL} and~\eqref{eqalg2TL}, we prove the theorem. By Proposition~\ref{paraproduct} and the claim, it will be enough to prove that
\[
 \|fg\|_{L^p} \lesssim \|f\|_{F^{p_1,q}_\alpha}\|g\|_{L^{p_2}} + \|f\|_{L^{p_3}} \|g\|_{F^{p_4,q}_\alpha}
 \]
and
\[
\| W^{(h)}_1 [W^{(k)}_1 f \cdot W^{(n)}_1 g]\|_{F^{p,q}_\alpha} \lesssim \|f\|_{F^{p_1,q}_\alpha}\|g\|_{L^{p_2}} + \|f\|_{L^{p_3}} \|g\|_{F^{p_4,q}_\alpha}
\]
for every $h,k,n\in \{0,\dots, m-1\}$. The first inequality is a consequence of H\"older's inequality:
\[
\|f g\|_{L^p} \leq \|f\|_{L^{p_1}(\mu_\chi)}\|g\|_{L^{p_2}} \leq \|f\|_{F^{p_1,q}_\alpha} \|g\|_{L^{p_2}}.
\]
In order to prove the second inequality, observe that by Lemma~\ref{pointwiseestheat2} there exists a positive constant $c$ such that
\[
| W^{(h)}_1[W^{(k)}_1 f \cdot W^{(n)}_1 g]| \lesssim \e^{-c \Delta_\chi}|W^{(k)}_1 f \cdot W^{(n)}_1 g|
\]
so that, by Proposition~\ref{CRTNintegrale}, Lemma~\ref{pointwiseestheat3} and H\"older's inequality
\begin{align*}
   \|W^{(h)}_1[W^{(k)}_1 f \cdot W^{(n)}_1 g]]\|_{F^{p,q}_\alpha} & \lesssim \|W^{(k)}_1 f \cdot W^{(n)}_1 g\|_{L^p} \\
& \lesssim \|W^{(k)}_1 f\|_{L^{p_1}} \|W^{(n)}_1 g\|_{L^{p_2}} \\
&\lesssim \|f\|_{L^{p_1}} \|g\|_{L^{p_2}} \\
& \lesssim \|f\|_{F^{p_1,q}_\alpha} \|g\|_{L^{p_2}}.
\end{align*}
Therefore, it remains to prove the claim. We provide the details only when $q<\infty$.

\smallskip

\textit{Step 1.} We prove~\eqref{eqalg1TL} and~\eqref{eqalg3TL}. Let $m = [\alpha/2]+1$. By~\eqref{Pi_fg}
\[
\mathscr{F}^{p,q}_\alpha(\Pi_f(g)) \lesssim \sum_{h,k=0}^{m-1} \left\| \left( \int_0^1 \left( u^{-\frac{\alpha}{2}} \int_0^1 \left|W^{(m)}_u W^{(h)}_t [W^{(m)}_t  f \cdot W^{(k)}_t g]\right| \, \frac{\dd t}{t}  \right)^q \, \frac{\dd u}{u}\right)^{1/q} \right\|_{L^p}.
\]
Thus, let now $h,k \in \{0,\dots, m-1\}$ and $u\in (0,1)$. By Lemma~\ref{pointwiseestheat2}, there exist $a_{2h},a_{2m}>0$ such that
\begin{align*}
|W^{(m)}_u  W^{(h)}_t [ W^{(m)}_t f \cdot  W^{(k)}_t g] | 
& = |W^{(h)}_t  W^{(m)}_u [ W^{(m)}_t f \cdot  W^{(k)}_t g] |  \\
& \lesssim u^m  \e^{-\frac{a_{2h}}{2}t \Delta_\chi} |\Delta_\chi^m \e^{-(\frac t2+u)\Delta_\chi} [W^{(m)}_t f \cdot  W^{(k)}_t g ]|\\
& \lesssim u^m \left(\frac t2+u\right)^{-m} \e^{-\frac{a_{2h}}{2}t \Delta_\chi} \e^{-a_{2m} (\frac t2+u)\Delta_\chi} | W^{(m)}_t f \cdot  W^{(k)}_t g |\\
& \lesssim u^m(u+t)^{-m} \e^{-c (t+u)\Delta_\chi} |W^{(m)}_t f \cdot  W^{(k)}_t g|,
\end{align*}
for some $c>0$. Therefore, by Lemmata~\ref{CRTNintegrale},~\ref{lemmasomme}~(ii), and~\ref{bilinearCRTNint}
\begin{align*}
 &  \left\|  \left( \int_0^1 \left( u^{-\frac{\alpha}{2}}  \int_0^1 |W^{(m)}_u W^{(h)}_t [W^{(m)}_t f \cdot W^{(k)}_t g] |  \, \frac{\dd t}{t} \right)^q \, \frac{\dd u}{u} \right)^{1/q} \right\|_{L^p} \\
& \lesssim \left\|  \left( \int_0^1 \left( u^{m-\frac{\alpha}{2}} \e^{-cu \Delta_\chi} \int_{0}^{1} (u+t)^{-m} \e^{-ct\Delta_\chi}  |W^{(m)}_t f \cdot  W^{(k)}_t g| \, \frac{\dd t}{t}  \right)^q \, \frac{\dd u}{u} \right)^{1/q} \right\|_{L^p} \\
& \lesssim \left\|  \left( \int_0^1 \left( u^{m-\frac{\alpha}{2}} \int_{0}^{1} (u+t)^{-m} \e^{-ct\Delta_\chi}  |W^{(m)}_t f \cdot  W^{(k)}_t g| \, \frac{\dd t}{t}  \right)^q \, \frac{\dd u}{u} \right)^{1/q} \right\|_{L^p} \\
& \lesssim \left\|  \left( \int_0^1 \left( t^{- \frac{\alpha}{2}} \e^{-ct\Delta_\chi}  |W^{(m)}_t f \cdot  W^{(k)}_t g|\right)^q  \, \frac{\dd t}{t}  \right)^{1/q} \right\|_{L^p} \\
& \lesssim \left\|  \left( \int_0^1 \left( t^{- \frac{\alpha}{2}}  |W^{(m)}_t f \cdot  W^{(k)}_t g |\right)^q  \, \frac{\dd t}{t}  \right)^{1/q} \right\|_{L^p}.
 \end{align*}
By  Lemma~\ref{pointwiseestheat2} $ |W^{(k)}_ t g| \lesssim \e^{-c t\Delta_\chi} |g|$. Hence, by Proposition~\ref{CRTNintegrale}, H\"older's inequality and the $L^{p_2}$-boundedness of the local heat maximal function (observe that $p_2>1$) we obtain
\begin{align*}
 \left\|  \left( \int_0^1 \left( t^{- \frac{\alpha}{2}}  |W^{(m)}_t f \cdot  W^{(k)}_t g |\right)^q  \, \frac{\dd t}{t}  \right)^{1/q} \right\|_{L^p}
 &\lesssim \left\|  \left( \int_0^1 \left( t^{- \frac{\alpha}{2}} |W^{(m)}_t f| \cdot \e^{-ct\Delta_\chi} |g| \right)^q  \, \frac{\dd t}{t}  \right)^{1/q} \right\|_{L^p}\\
  & \lesssim \bigg\|\sup_{t\in (0,1)} \e^{-ct\Delta_\chi} |g| \bigg\|_{L^{p_2}} \| f\|_{F^{p_1, q}_\alpha} \\
  & \lesssim \|g\|_{L^{p_2}} \| f\|_{F^{p_1, q}_\alpha}.
\end{align*}
The proof of~\eqref{eqalg1TL} is thus complete. The proof of~\eqref{eqalg3TL} is similar and omitted. 

\smallskip

\textit{Step 2.} We prove~\eqref{eqalg2TL}. By Lemma~\ref{pointwiseestheat2} and the Leibniz rule,
\begin{align*}
&|W^{(m)}_u W^{(m)}_t  [W^{(h)}_t  f \cdot W^{(k)}_t g] |\\
&= u^m |\Delta_\chi^m \e^{-(u+t)\Delta_\chi} (t\Delta_\chi)^m  [W^{(h)}_t  f \cdot W^{(k)}_t g]| \\
& \lesssim u^m(t+u)^{-m} \e^{-a_{2m}(t+u)\Delta_\chi} |(t\Delta_\chi)^m[W^{(h)}_t  f \cdot W^{(k)}_t g]| \\
%& \lesssim u^m (t+u)^{-m} \e^{-a_m(t+u)\Delta_\chi} t^{m}  \sum_{i=0}^{2m}\max_{|I| = i} \max_{|I'|=2m-i} | Y_{I}W^{(h)}_t f \cdot Z_{I'}W^{(k)}_t g | \\
& \lesssim u^m (t+u)^{-m} \e^{-a_{2m}(t+u)\Delta_\chi} t^{m+h+k} \sum_{i=0}^{2m} \max_{|L|= i+2h} \max_{|J|= 2m+2k-i} | Y_{L}\e^{-t\Delta_\chi} f \cdot Z_{J} \e^{-t\Delta_\chi}g |,
\end{align*}
where $(Y_L, Z_J) = (\Delta_\chi^{h}, \Delta_\chi^{m+k})$ if $i=0$, $(Y_L, Z_J) = (\Delta_\chi^{m+h}, \Delta_\chi^{k})$ if $i=2m$ and $(Y_L, Z_J) = (X_L, X_J)$ otherwise. Thus, after defining 
\[
F(f,g)= \sum_{i=0}^{2m} F_i(f,g), \qquad F_i(f,g)=t^{m+h+k} \max_{|L|= i+2h} \max_{|J|= 2m+2k-i} | Y_{L}\e^{-t\Delta_\chi} f \cdot Z_{J} \e^{-t\Delta_\chi}g |,
\]
by Proposition~\ref{CRTNintegrale} and Lemma~\ref{lemmasomme} we obtain
\begin{align*}
  &\left\|\left( \int_0^1 \left( u^{-\frac{\alpha}{2}} \int_{0}^1|W^{(m)}_u W^{(m)}_t  [W^{(h)}_t  f \cdot W^{(k)}_t g] | \, \frac{\dd t}{t} \right)^q \, \frac{\dd u}{u}\right)^{1/q} \right\|_{L^p}\\
&\qquad \lesssim  \left\|\left( \int_0^1 \left( u^{m-\frac{\alpha}{2}} \e^{-a_{2m} u\Delta_\chi}\int_{0}^1 (t+u)^{-m} \e^{-ct\Delta_\chi}  F(f,g) \, \frac{\dd t}{t} \right)^q \, \frac{\dd u}{u}\right)^{1/q} \right\|_{L^p}\\
&\qquad \lesssim  \left\|\left( \int_0^1 \left( u^{m-\frac{\alpha}{2}} \int_{0}^1 (t+u)^{-m} \e^{-ct\Delta_\chi} F(f,g) \, \frac{\dd t}{t} \right)^q \, \frac{\dd u}{u}\right)^{1/q} \right\|_{L^p}\\
%& \lesssim \left\|\left( \int_0^1 \left( t^{-\frac{\alpha}{2}} \e^{-ct\Delta_\chi}   F(f,g) \, \frac{\dd t}{t} \right)^q \right)^{1/q} \right\|_{L^p}\\
&\qquad  \lesssim \sum_{i=0}^{2m} \left\|\left( \int_0^1 \left( t^{-\frac{\alpha}{2}}  \e^{-ct\Delta_\chi} F_i(f,g)\right)^q \, \frac{\dd t}{t}  \right)^{1/q} \right\|_{L^p}.
\end{align*}
We separate two cases, depending on the values of $i$. The cases $i=0$ or $i=2m$ are symmetric, so that we can assume without loss of generality that $i=0$. Thus,
\begin{align*}
  t^{m+h+k}  \max_{|L|= 2h} \max_{|J|= 2m+2k}  | Y_{L}\e^{-t\Delta_\chi} f \cdot Z_{J} \e^{-t\Delta_\chi}g| = | W^{(h)}_t f | |W^{(m+k)}_t g|.
\end{align*}
Thus by Lemma~\ref{bilinearCRTNint} and H\"older's inequality
\begin{align*}
  \left\|\left( \int_0^1 \left( t^{-\frac{\alpha}{2}}  \e^{-ct\Delta_\chi} F_i(f,g) \, \frac{\dd t}{t} \right)^q \right)^{1/q} \right\|_{L^p}
  &= \left\|\left( \int_0^1 \left( t^{-\frac{\alpha}{2}}  \e^{-ct\Delta_\chi}  | W^{(h)}_t f | |W^{(m+k)}_t g| \, \frac{\dd t}{t} \right)^q \right)^{1/q} \right\|_{L^p}\\
 &\lesssim  \left\|\left( \int_0^1 \left( t^{-\frac{\alpha}{2}}  | W^{(h)}_t f |  |W^{(m+k)}_t g| \, \frac{\dd t}{t} \right)^q \right)^{1/q} \right\|_{L^p}\\
&   \leq  \big\|\sup_{t\in (0,1)} |W^{(m+k)}_t g| \big\|_{L^{p_2}}   \| f\|_{F^{p_1,q}_\alpha}\\
& \lesssim \| g\|_{L^{p_2}} \| f\|_{F^{p_1,q}_\alpha},
\end{align*}
the last inequality since $|W^{(m+k)}_t g| \lesssim e^{-ct\Delta_\chi} |g|$ and by the $L^{p_2}$-boundedness of the local heat maximal function.

Assume now that $i\in \{1,\dots, m-1 \}$. 
Since
\[
 \max_{|L|= i+2h} \max_{|J|= 2m+2k-i} | Y_{L}\e^{-t\Delta_\chi} f \cdot Z_{J} \e^{-t\Delta_\chi}g |
 \leq  \max_{|L|\leq i+2h} | Y_{L}\e^{-t\Delta_\chi} f| \max_{|J|\leq 2m+2k-i}| Z_{J} \e^{-t\Delta_\chi}g |,
\]
one has
\begin{align*}
  &\left\|\left( \int_0^1 \left( t^{-\frac{\alpha}{2}}  \e^{-ct\Delta_\chi} F_i(f,g) \, \frac{\dd t}{t} \right)^q \right)^{1/q} \right\|_{L^p}\\
  & \lesssim \left\|\left( \int_0^1 \left(t^{m+h+k - \frac{\alpha}{2}}  \e^{-ct\Delta_\chi} \left(  \max_{|L|\leq i+2h} | X_{L}\e^{-t\Delta_\chi} f | \max_{|J|\leq 2m+2k-i} |X_{J} \e^{-t\Delta_\chi}g|\right) \, \frac{\dd t}{t} \right)^q \right)^{1/q} \right\|_{L^p}\\
    & \lesssim \left\|\left( \sum_{j=0}^\infty \int_{2^{-j}}^{2^{-j+1}} \left( t^{-\frac{\alpha}{2}}   \e^{-ct\Delta_\chi} \left(W^{(i+2h), *}_{2^{-j}}f\cdot W^{(2m+2k-i), *}_{2^{-j}}g \right) \, \frac{\dd t}{t} \right)^q \right)^{1/q} \right\|_{L^p}\\
     & \lesssim \left\|\left( \sum_{j=0}^\infty \left( 2^{j\frac{\alpha}{2}}   \e^{-c2^{-j}\Delta_\chi} (W^{(i+2h), *}_{2^{-j}}f\cdot W^{(2m+2k-i), *}_{2^{-j}}g ) \right)^q \right)^{1/q} \right\|_{L^p}\\
& \lesssim \left\|\left( \sum_{j=0}^\infty \left( 2^{j \frac{\alpha}{2}} W^{(i+2h), *}_{2^{-j}}f\cdot W^{(2m+2k-i), *}_{2^{-j}}g \right)^q \right)^{1/q} \right\|_{L^p},
\end{align*}
where we used Lemma~\ref{contCRTN}. We apply H\"older's inequality with $\alpha_1 = \frac{i+2h}{2(m+h+k)}\alpha$, $ \alpha_2 =\frac{2m+2k-i}{2(m+h+k)}\alpha$, $\frac{2(m+h+k)}{q_1} = \frac{i+2h}{q}$, $\frac{2(m+h+k)}{q_2} = \frac{2m+2k-i}{q}$ to obtain that the last term of the previous inequality is controlled by
\begin{align*}                            
       &  \Bigg\|\left( \sum_{j=0}^\infty \left( 2^{j\frac{\alpha_1}{2}} W^{(i+2h), *}_{2^{-j}}f \right)^{q_1} \right)^{1/{q_1}}  \left( \sum_{j=0}^\infty \left( 2^{j\frac{\alpha_2}{2}} W^{(2m+2k-i), *}_{2^{-j}}g  \right)^{q_2} \right)^{1/{q_2}}  \Bigg\|_{L^p}\,,  
 \end{align*}      
 which again by H\"older's inequality with  $\frac{2(m+h+k)}{r_1} = \frac{i+2h}{p_1} + \frac{2m+2k-i}{p_3}$, $ \frac{2(m+h+k)}{r_2} = \frac{i+2h}{p_2} + \frac{2m+2h-i}{p_4}$, is in turn controlled by
       \begin{align*}
               &  \left\|\left( \sum_{j=0}^\infty \left( 2^{j\frac{\alpha_1}{2}}W^{(i+2h), *}_{2^{-j}}f \right)^{q_1} \right)^{1/{q_1}}\right\|_{L^{r_1}}  \left\| \left( \sum_{j=0}^\infty \left( 2^{j\frac{\alpha_2}{2}} W^{(2m+2k-i), *}_{2^{-j}}g \right)^{q_2} \right)^{1/{q_2}}  \right\|_{L^{r_2}} \lesssim \|f\|_{F^{r_1,q_1}_{\alpha_1}} \|g\|_{F^{r_2,q_2}_{\alpha_2}},
\end{align*}
the last inequality by Theorem~\ref{teo-equiv3}. Let now $\theta = \frac{i+2h}{2(m+h+k)}$, and observe that by Theorem~\ref{interpolation}
\[
(F^{p_3,\infty}_0(\mu_\chi), F^{p_1,q}_{\alpha}(\mu_\chi))_{[\theta]} = F^{r_1,q_1}_{\alpha_1}(\mu_\chi)
\]
and
\[
(F^{p_4,q}_{\alpha}(\mu_\chi), F^{p_2,\infty}_0(\mu_\chi))_{[\theta]} = F^{r_2,q_2}_{\alpha_2}(\mu_\chi).
\]
Since for every $s\in (1,\infty]$ we have $L^s(\mu_\chi)  \hookrightarrow F^{s,\infty}_0(\mu_\chi)$ by the $L^s$-boundedness of  the heat maximal function, by~\eqref{interpinequality} we have
\begin{align*}
   \|f\|_{F^{r_1,q_1}_{\alpha_1}} \|g\|_{F^{r_2,q_2}_{\alpha_2}}
& \lesssim \|f\|_{L^{p_3}}^\theta \|f\|_{F^{p_1,q}_\alpha}^{1-\theta} \|g\|_{F^{p_4,q}_\alpha}^{\theta}  \|g\|_{L^{p_2}}^{1-\theta} \\
& \lesssim \|f\|_{L^{p_3}} \|g\|_{F^{p_4,q}_\alpha} + \|f\|_{F^{p_1,q}_\alpha}  \|g\|_{L^{p_2}}
\end{align*}
which completes the proof of~\eqref{eqalg2TL} and of the theorem.
\end{proof}

\section{Future developments}\label{geo-inq-sec}  

Although the theory developed in this paper is rather complete, there are additional questions that are certainly worth of investigation. 

In analogy to the Euclidean setting, we expect that the Triebel--Lizorkin spaces $F^{p,2}_0(\mu_\chi)$, when either $p=1$ or $p=\infty$, correspond respectively to the local Hardy space $\mathfrak h^1(\mu_\chi)$ and its dual $\mathfrak{bmo}(\mu_\chi)$ introduced in \cite{BPTV}. We also expect that the dual spaces of $B^{p,q}_{\alpha}(\mu_\chi)$ and $F^{p,q}_{\alpha}(\mu_\chi)$ when $\alpha>0$ can be identified with analogous spaces with negative index of regularity. Moreover, it would be interesting to extend our results to the case when $0<p,q<1$. To conclude, we mention the study of homogeneous versions of Besov and Triebel--Lizorkin spaces on nondoubling Lie groups. 

\bigskip

\emph{Acknowledgements}.  The authors wish to thank Andrea Carbonaro for useful conversations.

\end{document}